\documentclass[12pt,a4paper]{amsart}

\usepackage[english]{babel}
\usepackage[T1]{fontenc}
\usepackage[utf8]{inputenc}
\usepackage{lmodern}
\usepackage{amsmath,amsthm,amscd,amsfonts,amssymb}
\usepackage{mathtools}
\usepackage{microtype}
\usepackage[shortlabels]{enumitem}
\usepackage{needspace}
\usepackage{hyperref}
\usepackage[capitalise]{cleveref}
\usepackage{setspace}
\allowdisplaybreaks

\newtheorem{theorem}{Theorem}[section]
\newtheorem{lemma}[theorem]{Lemma}
\newtheorem{proposition}[theorem]{Proposition}
\newtheorem{corollary}[theorem]{Corollary}

\theoremstyle{definition}
\newtheorem{definition}[theorem]{Definition}

\theoremstyle{remark}
\newtheorem{remark}[theorem]{Remark}

\newcommand{\GL}{\mathbf{GL}}
\newcommand{\On}{\mathrm{On}}
\newcommand{\Reg}{\operatorname{Reg}}
\newcommand{\Sing}{\operatorname{Sing}}
\newcommand{\NS}{\mathrm{NS}}
\newcommand{\Tr}{\operatorname{Tr}}
\newcommand{\Logd}{\operatorname{Log}_{d}}
\newcommand{\Pow}{\mathcal P}
\newcommand{\calI}{\mathcal I}
\newcommand{\calF}{\mathcal F}

\newcommand{\equivNS}{=_{\NS}}

\newcommand{\vecU}{\vec U}

\providecommand{\GLP}{\mathbf{GLP}}
\providecommand{\Logp}{\operatorname{Log}_{d}}
\providecommand{\llbracket}{[\![}
\providecommand{\rrbracket}{]\!]}

\title[GL and finite derived ordinal topologies]{Completeness of the G\"odel--L\"ob Provability Logic for Finite Derived Ordinal Topologies}

\author[M. Golshani]{Mohammad Golshani}
\address{School of Mathematics, Institute for Research in Fundamental Sciences (IPM), P.O. Box 19395-5746, Tehran, Iran}
\email{golshani.m@gmail.com}

\subjclass[2020]{Primary 03B45, 03E55; Secondary 54A10}
\keywords{provability logic, G\"odel--L\"ob logic, derived topology, Mahlo topology, stationary reflection, Full Reflection, coherent sequence of measures}

\begin{document}

\begin{abstract}
We study the G\"odel--L\"ob provability logic \(\GL\) for finite derived topologies on the ordinals.
 Assuming GCH and the existence of a measurable cardinal $\kappa$ which carries a coherent sequence of normal measures $\vecU$ with
      $o^{\vecU}(\kappa)\geq\kappa$, we obtain a cofinality and GCH preserving generic extension in which
$\Logd(\kappa,\tau_n)=\GL$ for every $n<\omega$, where $\tau_n$ is the $n$-th derived topology. The result answers a well-know open question.
Extending Beklemishev's theorem,  we show that in the same model, for every $k<\omega$,
$
 \Logp(\kappa;\tau_{2k},\tau_{2k+1})=\GLP_2,
$
where the two modalities of $\GLP_2$ are interpreted respectively by
$d_{2k}$ and $d_{2k+1}$.
\end{abstract}
\maketitle

\section{Introduction}

The G\"odel--L\"ob provability logic \(\GL\) is the modal logic generated by the normal modal axiom together with L\"ob's axiom.  By the classical completeness theorem of Segerberg, it is complete for finite transitive irreflexive trees.  Esakia observed that the modal derivative on scattered topological spaces has the same formal behavior, and this opened the way to ordinal and set-theoretic semantics for provability logic.

Blass~\cite{Blass} introduced a particularly useful filter semantics on the ordinals.  His construction reduces completeness to the existence of suitable disjoint labelings of a family of universal trees \(K_n\).  For the end-segment filters these labelings exist in ZFC, giving the Abashidze--Blass ordinal completeness theorem.  For the club filters Blass obtained completeness from suitable square principles.  The same labeling method was later used for the filter sequence of normal measures in~\cite{GolshaniZoghifard}.

One can define a hierarchy \(\langle\tau_n:n<\omega\rangle\) of derived topologies on the ordinals.  We follow Bagaria's convention that \(\tau_0\) is the interval topology; then \(\tau_1\) is the club topology and \(\tau_2\) is the Mahlo topology. The completeness problem beyond the club topology has remained open since the work of Blass \cite{Blass}, see also
\cite{AguileraFernandezDuque}, and Bagaria
\cite{Bagaria}.

We write \(\Logd(\alpha,\tau_n)\) for the set of modal formulas
valid on the ordinal space \((\alpha,\tau_n)\) under derivative semantics.
Since each \(\tau_n\) is scattered, Esakia's theorem always gives the
soundness inclusion
\(\GL\subseteq\Logd(\alpha,\tau_n)\).  The problem is to obtain the reverse
inclusion.
In this paper we answer this question and prove something stronger.
The main result of the paper is the following.

\begin{theorem}\label{thm:intro-main}
Assume GCH.  Suppose that \(\kappa\) carries a coherent sequence of normal measures \(\vec U\) with \(o^{\vec U}(\kappa)\geq\kappa\).  Then there is a cofinality and GCH preserving forcing extension, such that
\[
  \Logd(\kappa,\tau_n)=\GL\qquad\text{for every }n<\omega.
\]
\end{theorem}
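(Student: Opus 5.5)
Soundness is free: each \(\tau_n\) is scattered, so Esakia's theorem gives \(\GL\subseteq\Logd(\kappa,\tau_n)\). The work is the reverse inclusion. We follow Blass's reduction. By Segerberg, a non-theorem \(\varphi\) of \(\GL\) is refuted at the root of some finite irreflexive transitive tree. Every such tree is a p-morphic image of one of the universal trees \(K_m\). So it suffices to find, for each \(n\) and \(m\), a \emph{disjoint labeling} of \(K_m\) by subsets of \(\kappa\) adapted to \(\tau_n\). This means we assign to each node \(x\) a set \(A_x\subseteq\kappa\) such that:
\begin{itemize}
\item the sets \(A_x\) partition \(\kappa\) (or an end-segment of it);
\item for \(\alpha\in A_x\), the \(\tau_n\)-derivative behaves correctly: \(\alpha\in d_n(A_y)\) iff \(y\) is a successor of \(x\).
\end{itemize}
The map \(\alpha\mapsto x\) is then a continuous open map from \((\kappa,\tau_n)\) onto the tree's topology, so derivative validity transfers back. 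Concretely, we want \(A_y\) to be \(n\)-stationary (i.e.\ \(d_n\)-accumulating) exactly at the points of \(A_x\) for \(y\) above \(x\), and nowhere else.

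The construction proceeds as follows.

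First, we recall Bagaria's characterization of the derivative of \(\tau_n\). For \(\alpha\) of uncountable cofinality-type, \(\alpha\in d_n(A)\) iff \(A\cap\alpha\) is \(n\)-simultaneously-stationary in \(\alpha\). The ordinals that are non-isolated in \(\tau_n\) are thus the \(n\)-reflecting ordinals. Below \(\kappa\) these are large-cardinal-like points, which is why the hypothesis \(o^{\vecU}(\kappa)\ge\kappa\) is needed.

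Second, we force over \(V\) with a GCH- and cofinality-preserving iteration. We use a reverse Easton support iteration over the inaccessibles below \(\kappa\). At each stage \(\beta\), it adds (by initial segments) a labeling of \(\beta\): a partition of \(\beta\) into pieces indexed by the nodes of each \(K_m\), coded uniformly. The conditions require only non-reflection requirements for \(\tau_n\) at points below \(\beta\) whose label says some \(A_y\) should not accumulate. This is in the spirit of adding nonreflecting stationary sets / killing \(\Pi^1_n\)-indescribability selectively. Such posets are \(\beta\)-strategically closed, so cardinals, cofinalities and GCH are preserved by standard arguments.

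Third, we must show the positive requirements survive. For \(\alpha\in A_x\) and \(y\) a successor of \(x\), the set \(A_y\cap\alpha\) must still be \(n\)-stationary in \(\alpha\). Here we use the coherent measure sequence. Points \(\alpha\) with \(o^{\vecU}(\alpha)\ge\) (roughly) \(\omega\cdot\mathrm{rank}\) of \(x\) will carry the labels for nodes of high height. Lifting the measures \(U(\alpha,\xi)\) through the iteration (via master conditions, as in Golshani--Zoghifard's measure-sequence labeling) shows that in the extension each \(A_y\) is measure-one for suitable \(U(\alpha,\xi)\) lifts. Measure-one sets of this kind witness \(n\)-stationarity for every \(n\), since normal measures concentrate on ordinals that are \(\Pi^1_n\)-indescribable-reflecting in the relevant sense. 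The order \(o^{\vecU}(\kappa)\ge\kappa\) provides enough distinct orders \(\xi<\kappa\) to handle all finite trees \(K_m\) and all \(n\) simultaneously. Nodes are assigned by the Mitchell order of the point modulo an arithmetic on \(\xi\).

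The main obstacle is this lifting/preservation step. We must show that the negative non-reflection requirements added generically do not destroy the \(n\)-stationarity demanded by the positive requirements. Equivalently, we must lift the embeddings \(j_{U(\alpha,\xi)}\) while keeping \(j(A_y)\ni\alpha\) correctly and avoiding \(\alpha\in j(\text{killed sets})\). First I would attempt this by building the stage-\(\alpha\) generic so that the label of \(\alpha\) in \(j(G)\) is chosen to agree with the intended node, and then checking \(\alpha\)-strategic closure of the tail to get a master condition.
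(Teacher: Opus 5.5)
\textbf{Verdict: the proposal has a genuine gap.} Soundness and the Blass-style reduction are the right frame, and lifted normal measures do give positivity. But the step everything rests on is missing, as you acknowledge: you never specify the labels, and you never give a forcing that controls $n$-s-stationarity \emph{exactly}.

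\textbf{The negative requirements are not handled.} A measure-one set for a normal measure on $\alpha$ is $n$-s-stationary for every $n$, so lifted measures give the positive requirements. They say nothing about the negative ones, which are the hard part. To make a non-successor label $A_z\cap\alpha$ fail to be $n$-s-stationary you need, by Theorem~\ref{thm:bagaria}(7), $k$-s-stationary sets $S,T\subseteq\alpha$ with $k<n$ and $A_z\cap d_k(S)\cap d_k(T)=\emptyset$. If $A_z\cap\alpha$ must still be $(n-1)$-s-stationary, necessarily $k=n-1$. So for $n\geq3$ your ``non-reflection requirements'' must supply witnesses that are at least $2$-s-stationary. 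Their simultaneous reflection must be destroyed on a prescribed set, while their own $(n-1)$-s-stationarity survives the rest of the iteration and the lifting of the $U(\alpha,\xi)$. That is a preservation problem of the same kind as the theorem itself. Strategic closure only preserves cardinals and GCH, not such stationarity patterns, and your sketch does not address it. A smaller point: the $A_x$ cannot partition an end-segment of $\kappa$. Such a labelling forces every point of $A_x$ to have rank at most the depth of $x$, while, e.g., $\gamma+\omega^{m+1}$ for $\tau_0$, or a point of cofinality $\omega_{m+1}$ for $\tau_1$, has rank $>m$. Only labelled points should matter, as in Theorem~\ref{thm:blasscriterion}.

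\textbf{The case $n=1$ cannot come from an iteration at inaccessibles.} Clause (4) of Theorem~\ref{thm:blasscriterion} for the club filters forces leaf labels to consist of ordinals of cofinality $\leq\omega$. Inductively, labels of nodes of depth $h$ consist of ordinals of cofinality $\leq\omega_h$. So $\tau_1$-labellings live entirely at small cofinalities, which your iteration does not affect, and Blass showed that completeness for the club filters can consistently fail. The paper handles this as follows:
\begin{itemize}
\item it arranges $\square_\mu$ for $\mu<\aleph_\omega$;
\item it takes the $\tau_0$ and $\tau_1$ base realizations from Theorems~\ref{thm:tailbase} and~\ref{thm:clubbase};
\item it runs its iteration only above $\Theta>\aleph_\omega$, so these bases survive.
\end{itemize}

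\textbf{The missing idea for $n\geq2$.} The paper never kills reflection level by level. Witzany's iteration for the closed separable system $\mathcal S^\Theta$ yields, at each $\lambda\in X_\rho$ with $\rho>0$, a coherent Reflection system $\langle X_\eta\cap\lambda:\eta<\rho\rangle$. In it, stationary reflection among regular cardinals mirrors the order of $\rho$. The Transfer Theorem~\ref{thm:shift} then shows that $A$ is $(m+2)$-s-stationary in $\lambda$ iff $\sigma_\lambda(A)$ is $m$-s-stationary in $\rho$. Hence $B\mapsto\bigcup_{\eta\in B}E^\lambda_\eta$ turns an $(\calF^m,\rho)$-realization into an $(\calF^{m+2},\lambda)$-realization. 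Both the positive and the negative requirements at level $m+2$ are inherited for free from level $m$.
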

Furthermore, we show that our approach show that in the same generic extension,  for every $n<\omega$,
\[
 \Logp(\kappa;\tau_{2n},\tau_{2n+1})=\GLP_2,
\]
where the two modalities of $\GLP_2$ are interpreted respectively by
$d_{2n}$ and $d_{2n+1}$.

\section{Preliminaries}
\label{sec:prelim}
\subsection{The provability logic $\GL$}

We briefly recall the G\"odel--L\"ob provability logic $\GL$. It is the normal modal logic
obtained from the propositional tautologies by adding the axioms
\[
   \Box(\varphi\rightarrow\psi)
   \rightarrow
   (\Box\varphi\rightarrow\Box\psi)
\]
and L\"ob's axiom
\[
   \Box(\Box\varphi\rightarrow\varphi)\rightarrow\Box\varphi,
\]
and closing under modus ponens and necessitation
\[
   \frac{\varphi}{\Box\varphi}.
\]
Thus $\GL$ is the logic of finite transitive irreflexive
Kripke frames.
We have the following result of Esakia.
\begin{theorem}(\cite{Esakia})
A modal formula $\varphi$ belongs to $\GL$ if and only if
$\varphi$ is valid, under Cantor-derivative semantics, in every
scattered topological space.
\end{theorem}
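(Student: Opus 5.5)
Soundness is a direct check. For the direction from $\GL$ to validity, the plan is to show that the set of formulas valid on every scattered space contains the axioms and is closed under the rules. For completeness, the plan is to use the Segerberg completeness theorem recalled above: every finite transitive irreflexive Kripke frame can be viewed as a scattered space, and derivative semantics on that space coincides with Kripke semantics.

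For soundness, fix a scattered space $(X,\tau)$ with Cantor derivative $d$. Dually, $\Box A = X\setminus d(X\setminus A)$, so $x\in\Box A$ iff some punctured neighbourhood of $x$ lies in $A$.
\begin{itemize}
\item[(K)] Suppose $x\in \Box(\varphi\to\psi)\cap\Box\varphi$. Intersect the two punctured neighbourhoods; the intersection witnesses $x\in\Box\psi$.
\item[(Nec)] If $\varphi$ is valid, its extension is $X$, and $\Box X=X$.
\item[(L\"ob)] Put $A=\llbracket\varphi\rrbracket$. Suppose $x\in\Box(\Box A\to A)$, so there is an open $U\ni x$ with $U\setminus\{x\}\subseteq (X\setminus\Box A)\cup A$. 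Assume toward a contradiction that $x\notin\Box A$. Then $B=(U\setminus\{x\})\setminus A$ accumulates at $x$, in particular $B\neq\emptyset$. Since $X$ is scattered, $B$ has an isolated point $y$. Choose open $V\ni y$ with $V\subseteq U\setminus\{x\}$ (possible in the $T_1$/$T_0$ setting usually assumed, or else shrink using $y\ne x$) and $V\cap B=\{y\}$. Then $V\setminus\{y\}\subseteq A$, so $y\in\Box A$. But $y\in U\setminus\{x\}$, so $y\in A$, contradicting $y\in B$. Hence $x\in\Box A$.
\end{itemize}
The point needing care is the choice of $V$ avoiding $x$. Many authors assume $T_0$ or $d$-semantics in arbitrary spaces. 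Without separation one instead takes the isolated point of $B\cup\{x\}$ relative to the scattered ordering, but I expect the standard statement to follow the convention that makes this step routine.

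For completeness, suppose $\varphi\notin\GL$. By Segerberg's theorem there is a finite transitive irreflexive frame $(W,R)$ refuting $\varphi$. Give $W$ the up-set topology, whose opens are the $R$-upward-closed sets. The smallest neighbourhood of $w$ is $\{w\}\cup R(w)$, and irreflexivity gives $d(A)=R^{-1}(A)$, so Kripke $\Diamond$ and $d$ agree. By induction on formulas the valuations agree, so $\varphi$ fails in this topological space. It is scattered: any nonempty $S$ has an $R$-maximal element $s$, which exists by finiteness and irreflexive transitivity (no cycles). Then $\{s\}\cup R(s)$ meets $S$ only in $s$, so $s$ is isolated in $S$.

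The main obstacle is only bookkeeping in the L\"ob step. The completeness direction is immediate given Segerberg's theorem.
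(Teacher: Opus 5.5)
The paper does not prove this theorem; it only cites Esakia. So there is no competing argument to compare with, and your plan is the standard one. Your completeness half is correct as written: Segerberg's finite frames, the up-set topology, $d(A)=R^{-1}(A)$ by irreflexivity, and scatteredness via $R$-maximal elements. The $(K)$ and necessitation checks are also fine.

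The defect is in the L\"ob step, and your hedge does not rescue it. Choosing an open $V\ni y$ with $V\subseteq U\setminus\{x\}$ needs an open set containing $y$ but not $x$. That is a $T_1$-type condition.
\begin{itemize}
\item Knowing $y\neq x$ does not supply such a set.
\item $T_0$ does not supply it either. Moreover every scattered space is already $T_0$, since $\{x,y\}$ has an isolated point, so ``the $T_0$ setting'' is no restriction at all.
\item You cannot retreat to a $T_1$ convention. The spaces in your own completeness half are not $T_1$: if $wRv$, every neighbourhood of $w$ contains $v$. So the theorem must be proved for arbitrary scattered spaces.
\end{itemize}

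Your parenthetical fallback therefore has to become the official argument, and it works cleanly. Put $C=(U\setminus A)\cup\{x\}\subseteq U$. Every point of $C$ lies in $d(X\setminus A)$. For $x$ this is the assumption $x\notin\Box A$. For $z\in C\setminus\{x\}$ it holds because $z\in U\setminus\{x\}$ and $z\notin A$, which forces $z\notin\Box A$.

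Let $y$ be an isolated point of the subspace $C$, and let $V$ be open with $V\cap C=\{y\}$. Then $W=V\cap U$ is an open neighbourhood of $y$ with $W\setminus A\subseteq V\cap C=\{y\}$. Hence $W\setminus\{y\}\subseteq A$, so $y\in\Box A$, contradicting $y\in C\subseteq d(X\setminus A)$.

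No ``scattered ordering'' is needed, only an isolated point of $C$. The case $y=x$ is covered automatically, and $C\neq\emptyset$ because $x\in C$.
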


\subsection{Filter semantics}

For an ordinal $\alpha$ and a filter $\calF\subseteq\Pow(\alpha)$, define its positive sets by
\[
  \calF^+=\{A\subseteq\alpha:(\forall X\in\calF)\ A\cap X\neq\emptyset\}.
\]
We allow the filters to be improper, in which case $\calF^+=\emptyset$.
A filter sequence below an ordinal $\Omega$ is a family
${\calF}=\langle\calF_\alpha:\alpha<\Omega\rangle$, where each $\calF_\alpha$ is a
filter on $\alpha$.  We assign to ${\calF}$, its associated derivative operation by
\[
  d_{\calF}(A)=\{\alpha<\Omega:A\cap\alpha\in\calF_\alpha^+\}.
\]

 For
$n<\omega$, let $K_n$ be the tree whose nodes are the empty set $\emptyset$ and all
finite sequences
$
  s=\langle(i_1,j_1),\ldots,(i_{k},j_{k})\rangle
$
such that
$
  n>i_1>\cdots>i_{k}\geq0,$
and
$ j_1,\ldots,j_{k}<\omega.
$
The order relation is the end extension relation $\lhd$.

\begin{lemma}\cite{Blass}
\label{lem:treeuniversal}
Every finite rooted transitive irreflexive tree of height $n$ is a bounded
morphic image of $K_n$.
\end{lemma}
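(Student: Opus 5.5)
Given a finite rooted transitive irreflexive tree $T$ of height $n$, I will build a surjective map $f\colon K_n\to T$ and check that it is a bounded morphism: monotone, $s\lhd t\Rightarrow f(s)<f(t)$ (forth), and whenever $f(s)<y$ there is $t$ with $s\lhd t$ and $f(t)=y$ (back). Here the height of $T$ is the number of nodes on a longest branch, so every node has depth at most $n-1$ below the root. The map is defined by recursion along $K_n$, tracking the pairs $(i,j)$: the first coordinate $i$ caps how much room remains below a node, and the second coordinate $j$ is a free index used to enumerate successors.

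The recursion runs as follows. Send $\emptyset$ to the root $r$. Suppose $f(s)=x$ and the last pair of $s$ has first coordinate $i$ (take $i=n$ for $s=\emptyset$). I maintain the invariant that the height of the subtree above $x$ is at most $i$, i.e.\ every chain strictly above $x$ has length at most $i-1$. The children of $s$ are the sequences $s^\frown(i',j)$ with $i'<i$ and $j<\omega$. For each $i'<i$, let $S_{i'}(x)$ be the set of $y>x$ (all strict successors, not only immediate ones) whose subtree has height at most $i'+1$. Enumerate $S_{i'}(x)$ cyclically by $j$, so every element is hit for infinitely many $j$. If $S_{i'}(x)$ is empty, send all $s^\frown(i',j)$ to nodes of a nonempty $S_{i''}(x)$ with $i''<i'$, or to $x$ itself if $x$ is a leaf.

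I need to correct that last clause. Mapping a child of $s$ to $x$ itself breaks the forth condition. So the cleaner choice is to map a child $s^\frown(i',j)$ to $x$ whenever no suitable $y$ exists, and to interpret the requirement as a bounded morphism onto the reflexive-free frame. That is still wrong: a bounded morphism from an irreflexive frame must send strict successors to strict successors. The honest fix is as follows. If $x$ is a leaf, then $s$ must itself be a leaf of $K_n$, which forces $i=0$. So I strengthen the invariant to say that the height of the subtree above $x$ is exactly $i$ (with $x$ at the root having height exactly $n$, which holds by the choice of $n$). Then define $S_{i'}(x)$ as the set of $y>x$ whose subtree height is exactly $i'$. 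I must check that $S_{i'}(x)$ is nonempty for every $i'<i$: a longest chain above $x$ passes through nodes of every subtree height $i-1,\dots,0$, so this holds. Now map the children $s^\frown(i',j)$, for $j<\omega$, onto $S_{i'}(x)$ surjectively. The invariant is preserved.

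With this set-up the remaining checks are routine.
\begin{itemize}
\item Forth: each child of $s$ maps to some $y>x$, and transitivity of both orders extends this to all descendants.
\item Back: if $y>x$ has subtree height $i'$, then $i'<i$, so $y\in S_{i'}(x)$ and some child of $s$ hits it.
\item Surjectivity: this follows from the back condition applied at the root.
\end{itemize}
The main obstacle is getting the bookkeeping right so that the first coordinates match subtree heights exactly. Mapping to arbitrary successors rather than immediate ones is what makes the back condition immediate without needing intermediate nodes.
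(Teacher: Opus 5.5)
There is a genuine error, an off-by-one in exactly the bookkeeping you flag as the main obstacle, and it makes the nonemptiness of $S_{i'}(x)$ fail. Your final construction is otherwise the standard argument behind Blass's lemma, which the paper only cites: the first coordinate of the last pair records the height of the image node, and the $j$'s enumerate \emph{all} strict successors of that height, not just the immediate ones.

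You declare that the height of $T$ is the number of nodes on a longest branch, and your invariant uses the same count for subtree heights. With that count every leaf has subtree height $1$, so no node has subtree height $0$ and $S_0(x)=\emptyset$ for every $x$. Yet every non-maximal $s\in K_n$ has children $s^\frown(0,j)$, so $f$ is undefined there. Your justification miscounts: a longest chain from $x$ has only $i-1$ nodes strictly above $x$, and they carry heights $i-1,\dots,1$. In fact the lemma is false under your convention. $K_n$ contains the chain $\emptyset\lhd\langle(n-1,0)\rangle\lhd\cdots\lhd\langle(n-1,0),\ldots,(0,0)\rangle$ of $n+1$ nodes, and the forth condition sends it to a strict chain of $n+1$ nodes in $T$, whose branches have at most $n$ nodes.

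The repair is to measure height in edges. Let $h(y)$ be the number of nodes on a longest chain strictly above $y$, so leaves have $h=0$, and read ``$T$ has height $n$'' as $h(\mathrm{root})=n$. This matches $K_n$: a node whose last pair has first coordinate $i$ has exactly $i$ nodes on a longest chain above it, and $\emptyset$ has $n$.

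With this convention, let $x=z_0<z_1<\cdots<z_i$ be a longest chain from $x$. Then $h(z_k)=i-k$ exactly: it is at least $i-k$ because of the chain, and at most $i-k$ by maximality of the chain together with transitivity. Hence $S_{i'}(x)\neq\emptyset$ for all $i'<i$. Also $y>x$ implies $h(y)<h(x)$, which is what your back condition needs.

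The invariant $h(f(s))=i$ is then preserved. Forth follows by induction on $|t|-|s|$ using transitivity of $T$, and back and surjectivity go through as you wrote. You should also delete the two abandoned attempts that send children of $s$ to $x$ itself, since they violate forth.
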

\begin{theorem}\cite{Blass}
\label{thm:blasscriterion}
Let $\calF=\langle\calF_\alpha:\alpha<\Omega\rangle$ be a filter sequence.  Suppose
that for every $n<\omega$ there is a map
$\Gamma:K_n\to\Pow(\Omega)$ such that:
\begin{enumerate}[(1)]
\item $\Gamma({\emptyset})$ is non-empty;
\item the sets $\Gamma(s)$, $s\in K_n$, are pairwise disjoint;
\item if $s\triangleleft t$ and $\alpha\in\Gamma(s)$, then
      $\Gamma(t)\cap\alpha\in\calF_\alpha^+$;
\item if $\alpha\in\Gamma(s)$, then
   $
        \bigcup_{s\triangleleft t}(\Gamma(t)\cap\alpha)\in\calF_\alpha.
$
\end{enumerate}
Then every formula valid for the filter sequence is provable in $\GL$.
\end{theorem}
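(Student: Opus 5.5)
We prove \cref{thm:blasscriterion} contrapositively. Suppose $\varphi\notin\GL$; we build a valuation on $\Omega$ under which $\varphi$ fails for the derivative $d_{\calF}$. The first step fixes the countermodel. By Segerberg completeness, $\varphi$ is refuted at the root of some finite rooted transitive irreflexive tree $T$ of some height $n$, say by a valuation $V_T$. By \cref{lem:treeuniversal} there is a surjective bounded morphism $f:K_n\to T$. Pulling $V_T$ back along $f$ gives a valuation $V_K$ on $K_n$ that refutes $\varphi$ at the root $\emptyset$, since bounded morphisms preserve truth of modal formulas. So it suffices to transfer refutation at the root of $K_n$ (with its strict order $\triangleleft$) to the space $(\Omega,d_{\calF})$.

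The second step defines the valuation on $\Omega$. Take the labeling $\Gamma$ supplied by the hypothesis for this $n$ and set
\[
V(p)=\bigcup\{\Gamma(s): s\in K_n,\ s\in V_K(p)\}.
\]
Points of $\Omega$ outside $\bigcup_s\Gamma(s)$ are left unconstrained. The claim, proved by induction on subformulas $\psi$ of $\varphi$, is that for every $s\in K_n$ and every $\alpha\in\Gamma(s)$ we have $\alpha\in V(\psi)$ if and only if $s\models\psi$ in $K_n$. The atomic case uses disjointness (2). The Boolean cases are immediate once we know each point lies in at most one $\Gamma(s)$, which again is (2). Once the claim is established, (1) gives some $\alpha\in\Gamma(\emptyset)$ with $\alpha\notin V(\varphi)$, so $\varphi$ is not valid for $\calF$.

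The main step is the modal case, where $V(\Diamond\psi)=d_{\calF}(V(\psi))$ and $\alpha\in\Gamma(s)$. Suppose first that $s\models\Diamond\psi$. Then some $t$ with $s\triangleleft t$ has $t\models\psi$. By (3), $\Gamma(t)\cap\alpha\in\calF_\alpha^+$, and by the induction hypothesis $\Gamma(t)\subseteq V(\psi)$. Positivity is upward closed, so $V(\psi)\cap\alpha\in\calF_\alpha^+$, which means $\alpha\in d_{\calF}(V(\psi))$. Conversely, suppose $s\not\models\Diamond\psi$, so no $t\triangleright s$ satisfies $\psi$. By the induction hypothesis, $V(\psi)$ is then disjoint from $\bigcup_{s\triangleleft t}\Gamma(t)$. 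By (4) that union intersected with $\alpha$ lies in $\calF_\alpha$, and it misses $V(\psi)\cap\alpha$. Hence $V(\psi)\cap\alpha\notin\calF_\alpha^+$, so $\alpha\notin d_{\calF}(V(\psi))$.

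The obstacle to watch is that the induction hypothesis only controls points lying in some $\Gamma(t)$. Stray points of $V(\psi)$ outside $\bigcup_t\Gamma(t)$ could in principle make $V(\psi)\cap\alpha$ positive. Condition (4) is exactly what handles this: the labelled successors form a filter set below $\alpha$, so stray points are negligible, and the argument above uses only that set. If $\calF_\alpha$ is improper, then $\calF_\alpha^+=\emptyset$, and by (3) this can only happen when $s$ has no successors. In that case both sides of the modal equivalence are false, so the argument is consistent.
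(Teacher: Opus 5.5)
Your proof is correct. The paper gives no proof of its own here and simply cites Blass; what you wrote is the standard argument behind that citation: pull the finite countermodel back to $K_n$, transport the valuation along $\Gamma$, and check by induction that each $\alpha\in\Gamma(s)$ agrees with $s$, using (2) for atoms, (3) for the forward modal direction, and (4) to neutralise unlabelled points in the backward direction. The one detail you left implicit is that a surjective bounded morphism $K_n\to T$ must send $\emptyset$ to the root of $T$, since the root has no predecessor and so neither can its preimage; this is what lets you refute $\varphi$ at $\emptyset$.
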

We now introduce a definition which is implicit in the work of Blass \cite{Blass}.
\begin{definition}
\label{def:rooted}
Suppose $\calF$ and $K_n$ are as above. A family
$\langle B_s:s\in K_n\rangle$  is an  $(\calF, \rho)$-realization if:
\begin{itemize}
\item $B_{\emptyset}=\{\rho\}$;
\item $B_s\subseteq\rho$ for $s\neq \emptyset$;
\item the sets $B_s$ are pairwise
disjoint;
\item If $ s\triangleleft t$ and $\eta \in B_s$, then
  $B_t\cap\eta\in  \calF_\eta^+$;
\item If $\eta \in B_s$, then  $\bigcup_{s\triangleleft t}(B_t\cap\eta)\in\calF_\eta.$
\end{itemize}
\end{definition}
The following lemma is evident.
\begin{lemma}
\label{lem:rootrestriction}
Suppose $\Gamma$ satisfies Theorem \ref{thm:blasscriterion} and
$\rho\in\Gamma(\emptyset)$, then $\langle B_s:s\in K_n\rangle$
is an  $(\calF, \rho)$-realization, where
$B_{\emptyset}=\{\rho\}$ and
$B_s=\Gamma(s)\cap\rho$, for
$s\neq \emptyset.
$
\end{lemma}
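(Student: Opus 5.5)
The plan is to check the five clauses of Definition~\ref{def:rooted} one at a time for the family $B_{\emptyset}=\{\rho\}$, $B_s=\Gamma(s)\cap\rho$ for $s\neq\emptyset$. The only tool needed is the trivial observation that for any $X\subseteq\Omega$ and any $\eta<\rho$ we have $(X\cap\rho)\cap\eta=X\cap\eta$. Hence the membership conditions in $\calF_\eta^+$ and $\calF_\eta$ are the same whether we use $\Gamma(t)$ or $B_t$, as long as $\eta<\rho$.

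The first two clauses hold by definition. For pairwise disjointness, two nonempty nodes $s\neq t$ give $B_s\cap B_t\subseteq\Gamma(s)\cap\Gamma(t)=\emptyset$ by (2) of Theorem~\ref{thm:blasscriterion}. For $s\neq\emptyset$ we have $B_s\subseteq\rho$, so $\rho\notin B_s$ and $B_s\cap\{\rho\}=\emptyset$.

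For the two filter clauses I split on whether $s=\emptyset$.

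\emph{Case $s=\emptyset$.} Then $\eta=\rho\in\Gamma(\emptyset)$. If $\emptyset\lhd t$, then $t\neq\emptyset$ and $B_t\cap\rho=\Gamma(t)\cap\rho\in\calF_\rho^+$ by (3). Every $t$ with $\emptyset\lhd t$ is nonempty, so
\[
\bigcup_{\emptyset\lhd t}(B_t\cap\rho)=\bigcup_{\emptyset\lhd t}(\Gamma(t)\cap\rho)\in\calF_\rho
\]
by (4).

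\emph{Case $s\neq\emptyset$.} Then $\eta\in B_s$ means $\eta\in\Gamma(s)$ and $\eta<\rho$. Any $t$ with $s\lhd t$ is also nonempty, so $B_t\cap\eta=\Gamma(t)\cap\rho\cap\eta=\Gamma(t)\cap\eta$, and (3) and (4) apply directly with $\eta$ in place of $\alpha$.

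There is no real obstacle. The only points needing care are that the root is handled separately, since $B_\emptyset$ is $\{\rho\}$ rather than $\Gamma(\emptyset)\cap\rho$, and that every successor $t$ of any node is nonempty, so the clauses only ever involve the truncated sets $\Gamma(t)\cap\rho$.
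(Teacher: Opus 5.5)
Your proposal is correct. The paper gives no proof and simply calls the lemma evident; your clause-by-clause check is exactly that routine verification. It rests on $(\Gamma(t)\cap\rho)\cap\eta=\Gamma(t)\cap\eta$ for $\eta\le\rho$, treats the root $B_\emptyset=\{\rho\}$ separately, and notes that every proper extension $t$ of a node is nonempty.
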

We need the following two results, see \cite{Blass}

\begin{theorem}[Abashidze--Blass]
\label{thm:tailbase}
In ZFC, $\GL$ is complete for the end-segment filter sequence. Indeed, for every
$n<\omega$, there $0<\rho<\omega^\omega$ and  an  $(\calF, \rho)$-realization, where $\calF=\langle \calF_\alpha: \alpha < \omega^\omega    \rangle$
is end-segment filter sequence.
\end{theorem}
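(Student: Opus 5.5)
We prove the statement by induction on $n$. The induction hypothesis is slightly stronger than the statement: for every $n<\omega$ there is an $(\calF,\omega^n)$-realization $\langle B^n_s:s\in K_n\rangle$ with the \emph{covering property}
\[
  \bigcup_{s\neq\emptyset}B^n_s=(0,\omega^n)\quad\text{(the open interval of ordinals).}
\]
We use Blass's convention for the end-segment filter: $\calF_\alpha$ is generated by the intervals $(\beta,\alpha)$ for $\beta<\alpha$. Thus $\calF_\alpha$ is improper when $\alpha$ is $0$ or a successor. When $\alpha$ is a limit, $\calF_\alpha$ is the tail filter, and $\calF_\alpha^+$ is the family of sets unbounded in $\alpha$. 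Consequently, $0$ and successor ordinals satisfy the last two clauses of Definition~\ref{def:rooted} automatically, and may serve as leaves or as nodes whose children need not be represented below them.

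\emph{Base case.} $K_0$ consists of the root only. Put $B^0_\emptyset=\{1\}$ and $\rho=\omega^0=1$. Since $\calF_1$ is improper, all clauses hold, and the covering property is trivial because $(0,1)=\emptyset$.

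\emph{Inductive step.} Assume the realizations $B^i$ for $K_i$ exist for all $i\le n$. In $K_{n+1}$ the children of the root are the pairs $(i,j)$ with $i\le n$ and $j<\omega$. The subtree above $(i,j)$ is isomorphic to $K_i$ via $\langle (i,j)\rangle^\frown u\mapsto u$.

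1. Fix an enumeration $\langle (i_m,j_m):m<\omega\rangle$ of these pairs in which every pair occurs infinitely often. In particular $i_m=n$ for infinitely many $m$.

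2. Set $\gamma_0=0$ and $\gamma_{m+1}=\gamma_m+\omega^{i_m}$. Then $\sup_m\gamma_m=\omega^{n+1}$, and the blocks $(\gamma_m,\gamma_{m+1}]$ partition $(0,\omega^{n+1})$.

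3. On block $m$, place the copy of $B^{i_m}$ translated by $\gamma_m$. Concretely, for $u\in K_{i_m}$ put
\[
  \gamma_m+B^{i_m}_u\subseteq B_{\langle(i_m,j_m)\rangle^\frown u}.
\]
Take $B_s$ to be the union of these contributions over all $m$, and set $B_\emptyset=\{\omega^{n+1}\}$.

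Disjointness and the covering property follow from the induction hypothesis together with the partition into blocks. Note that the block root $\gamma_m+\omega^{i_m}=\gamma_{m+1}$ lies in $B_{\langle(i_m,j_m)\rangle}$.

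It remains to verify the filter clauses at each point $\eta$.

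For $\eta$ in block $m$ with $\eta$ a limit, we have $\eta>\gamma_m$. Hence every tail $(\beta,\eta)$ with $\beta\ge\gamma_m$ lies inside the block. Translation by $\gamma_m$ is an order isomorphism from $(0,\omega^{i_m}]$ onto $(\gamma_m,\gamma_{m+1}]$, and it preserves unboundedness and tails below limits. Therefore the clauses for $\eta$ transfer from those for the corresponding point in $B^{i_m}$. Here we use that every descendant of $\langle(i_m,j_m)\rangle^\frown u$ in $K_{n+1}$ has the form $\langle(i_m,j_m)\rangle^\frown v$ with $u\lhd v$ in $K_{i_m}$.

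For the root $\rho=\omega^{n+1}$, clause 5 holds because, by covering, the union of all $B_t$ for $t\neq\emptyset$ is the whole interval $(0,\rho)$. For clause 4, let $t\ne\emptyset$ lie above the child $(i,j)$, say $t=\langle(i,j)\rangle^\frown u$. Then the translated copy of $B^{i}_u$ is nonempty in every block $m$ with $(i_m,j_m)=(i,j)$. There are cofinally many such blocks, so $B_t$ is unbounded in $\rho$. The point $0$ is never used, so $0<\rho<\omega^\omega$, as the statement requires.

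\emph{Main obstacle.} The delicate point is clause 5 at the root, together with its translated analogues. It forces the nonroot sets to fill an entire tail below every limit point that carries a non-leaf label. A naive Cantor-normal-form coding leaves gaps, which is why the argument needs the covering invariant and the exact tiling of $(0,\omega^{n+1})$ by the blocks $(\gamma_m,\gamma_{m+1}]$. One also has to check carefully that translation respects the local tail structure at every limit point; this holds because each block is an interval left-open at $\gamma_m$.
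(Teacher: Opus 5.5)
The paper gives no proof of this statement. It is quoted from Blass and used only as the base of the recursion in Theorem~\ref{cor:concrete}, so there is no internal argument to match. Your recursion with the covering invariant $\bigcup_{s\neq\emptyset}B^n_s=(0,\omega^n)$ is a sound, self-contained proof:
\begin{itemize}
\item The blocks $(\gamma_m,\gamma_{m+1}]$ tile $(0,\omega^{n+1})$.
\item Left translation by $\gamma_m$ is an order isomorphism of $(0,\omega^{i_m}]$ onto the block, and it preserves tails and unboundedness below every limit $\eta>\gamma_m$. Points of $B_t$ coming from earlier blocks lie below $\gamma_m$ and are harmless.
\item Infinite repetition of each pair $(i,j)$ gives positivity at $\omega^{n+1}$, and the covering invariant gives the filter clause there.
\end{itemize}
The completeness half of the statement then follows from Theorem~\ref{thm:blasscriterion} with $\Gamma(s)=B_s$; say this explicitly.

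One claim is false, however, and your verification relies on it for the successor points. Successor ordinals do not satisfy both remaining clauses of Definition~\ref{def:rooted} automatically, and they may not sit at nodes with children. For successor $\eta$ the filter $\calF_\eta$ is improper, so $\calF_\eta^+=\emptyset$, and the positivity clause $B_t\cap\eta\in\calF_\eta^+$ can never hold. Only the filter clause is automatic there. So a successor may be placed only in $B_s$ for a leaf $s$. Dually, a limit can never be placed at a leaf, since the filter clause would then demand $\emptyset\in\calF_\eta$.

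Your construction does respect this, but you must say why. Add to the induction hypothesis that a point of $B^n_s$ is a successor ordinal iff $s$ is a leaf of $K_n$. This holds for $K_0$, where the root is a leaf and $1$ is a successor. It is preserved in the inductive step for three reasons:
\begin{itemize}
\item Translation by $\gamma_m$ preserves successor-ness, and $u\mapsto\langle(i_m,j_m)\rangle^\frown u$ preserves leaf-ness.
\item The block top $\gamma_{m+1}=\gamma_m+\omega^{i_m}$ is a successor iff $i_m=0$, iff $\langle(i_m,j_m)\rangle$ is a leaf of $K_{n+1}$.
\item The new root $\omega^{n+1}$ is a limit and not a leaf.
\end{itemize}
Equivalently, run your translation argument for every point of a block, not only the limits. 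At a successor $\eta'\in B^{i_m}_u$, the induction hypothesis (that $B^{i_m}$ is a realization) already forces $u$ to be a leaf, so the corresponding node of $K_{n+1}$ is a leaf too.
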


\begin{theorem}[Blass]
\label{thm:clubbase}
Assume Jensen's $\square_\mu$ for every infinite cardinal $\mu<\aleph_\omega$.
Then $\GL$ is complete for the club-filter sequence. Indeed, for every
$n<\omega$, there is $0<\rho<\aleph_\omega$ and  an  $(\calF, \rho)$-realization, where $\calF=\langle \calF_\alpha: \alpha < \aleph_\omega    \rangle$
is the club filter sequence.
\end{theorem}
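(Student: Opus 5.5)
The plan is to build, by induction on $n$, a single labeling $\Gamma$ of all ordinals below $\aleph_n$ (roughly) that satisfies the conditions of Theorem~\ref{thm:blasscriterion} at every point. We then take $\rho$ to be a point labeled by the root, for instance $\rho=\omega_n$ or a suitable ordinal of cofinality $\omega_n$, and apply Lemma~\ref{lem:rootrestriction} to obtain the $(\calF,\rho)$-realization.

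\textbf{Role of cofinality.} For the club filter sequence, $\calF_\alpha$ is improper when $\alpha$ is a successor or has countable cofinality, since a countable-cofinality ordinal contains disjoint clubs. So $\calF_\alpha^+=\emptyset$ there, and condition (3) forces every point labeled by a non-leaf node to have uncountable cofinality. Conversely, the leaves can absorb the countable-cofinality points. This suggests the invariant that a point labeled by a node $s$ ending in $(i,j)$ has cofinality $\omega_i$ (with $\omega_0=\omega$). The children of $s$ then carry the indices $i'<i$, so the labels of $t\rhd s$ live on $S^{\alpha}_{\omega_{i'}}$ for $i'<i$, which are stationary in $\alpha$ when $\operatorname{cf}\alpha=\omega_i$.

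\textbf{Using the square sequences.} Fix, for each infinite $\mu<\aleph_\omega$, a $\square_\mu$-sequence $\langle C_\beta:\beta<\mu^+\text{ limit}\rangle$. Each $C_\beta$ is club in $\beta$, $\operatorname{otp}(C_\beta)\le\mu$, and $C_\gamma=C_\beta\cap\gamma$ for every $\gamma\in\lim C_\beta$. Using these, we define the label of $\beta$ by recursion: the label of $\beta$ is computed from $\operatorname{cf}\beta$, from the labels of points of $\lim C_\beta$, and from $\operatorname{otp}(C_\beta)$ (coded into the $j$-coordinates).

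The point of coherence is this. If $\alpha$ has uncountable cofinality and label $s$, then every $\gamma\in\lim C_\alpha$ has $C_\gamma=C_\alpha\cap\gamma$. Hence its label is computed from data internal to $C_\alpha$, and we can arrange the recursion so that all such $\gamma$ receive labels extending $s$. Since $\lim C_\alpha$ is club in $\alpha$, this gives condition (4).

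For condition (3), we split $S^{\mu^+}_{\omega_{i'}}$ into $\omega$ pieces according to (a function of) $\operatorname{otp}(C_\gamma)$ modulo something, or via a Solovay-type splitting along the square. The aim is that each piece meets $\lim C_\alpha$ in a stationary subset of $\alpha$, using Fodor's lemma inside $\operatorname{otp}(C_\alpha)$, which has cofinality $\omega_i$. Disjointness, condition (2), is automatic, because each ordinal receives exactly one label.

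\textbf{Induction and the main obstacle.} The induction on $n$ proceeds as follows. Given the labeling for $K_{n-1}$ on ordinals below $\aleph_{n-1}$ (more precisely, below $\aleph_{\omega}$ but only using cofinalities $<\omega_{n-1}$), we extend it by treating the new top index $n-1$: points of cofinality $\omega_{n-1}$ are assigned nodes $\langle(n-1,j)\rangle$, and the subtree below them is copied from the inductive labeling.

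The main obstacle I expect is making the recursive labeling simultaneously coherent and rich. Every child class must remain stationary in every $\alpha$ carrying the parent label, while the points outside the descendants of $s$ must be nonstationary in $\alpha$; the latter is a failure-of-reflection requirement, which is exactly what $\square_\mu$ supplies. My first attempt would be to define labels purely as a function of the order type of $C_\beta$ and of the labels along $C_\beta$, inductively in $\operatorname{otp}(C_\beta)$. This reduces the stationarity check in $\alpha$ to a stationarity check in the ordinal $\operatorname{otp}(C_\alpha)<\mu^+$, where the induction hypothesis applies.
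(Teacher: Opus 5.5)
The paper gives no proof of this statement; it quotes it from Blass. So I am judging your sketch on its own terms, and there is a genuine gap. Your preliminary observations are right: $\calF_\alpha$ is improper at countable cofinality, and it is natural to put nodes ending in $(i,j)$ on points of cofinality $\omega_i$. But the labeling is never actually defined, and neither of the two concrete mechanisms you commit to works.

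First, your rule for condition~(4) is inconsistent as soon as $n\ge 3$. The rule says: for every labeled $\alpha$ of uncountable cofinality with label $s$, all points of $\lim C_\alpha$ receive labels properly extending $s$. Let $\alpha$ carry a node ending in $(2,j)$, so $\operatorname{cf}\alpha=\omega_2$ by your invariant. Pick $\gamma_1<\gamma_2$ in $\lim C_\alpha$, both of cofinality $\omega_1$. Then $\gamma_1\in\lim C_{\gamma_2}=\lim C_\alpha\cap\gamma_2$, so the same rule applied at $\gamma_2$ forces the label of $\gamma_1$ to properly extend that of $\gamma_2$. But both labels end in index $1$, and indices strictly decrease along nodes of $K_n$. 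Without the invariant, $n+2$ points of uncountable cofinality on $\lim C_\alpha$ would give a $\lhd$-chain of length $n+2$ in $K_n$. So (4) at $\alpha$ can only be witnessed by a carefully chosen proper subclub of $\lim C_\alpha$, and you do not say which one.

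Second, your recursion only copies structure; it does not create it. Suppose the label of $\beta$ is computed from $\operatorname{cf}\beta$, $\operatorname{otp}(C_\beta)$ and the labels along $\lim C_\beta$. Since $C_\gamma=C_\beta\cap\gamma$ for $\gamma\in\lim C_\beta$, induction on $\operatorname{otp}(C_\beta)$ shows the label of $\beta$ is $G(\operatorname{otp}(C_\beta))$ for a single function $G$. The configuration below $\beta$ is then an isomorphic copy, via the continuous collapse of $\lim C_\beta$, of the configuration $G$ induces below $\operatorname{otp}(C_\beta)$.

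- If, as you indicate, $\beta\in[\mu,\mu^+)$ uses the $\square_\mu$-sequence, then every such $\beta$ of cofinality $\mu$ has $\operatorname{otp}(C_\beta)=\mu$, so all of them get the same label. With your suggested root $\rho=\omega_n$, at most one of the $\omega$ nodes $\langle(n-1,j)\rangle$ can be stationary in $\rho$, and the stationarity requirement fails at the root.
- For any other root you would need $G$ restricted to $\omega_n$ to be a realization already, which is the original problem. Your induction hypothesis does not supply this: it concerns the actual labels $G(\operatorname{otp}(C_\xi))$ of ordinals $\xi<\mu$, not $G(\xi)$.

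The new combinatorics you need is exactly what you call ``the main obstacle'' and leave open. You need two further ingredients:

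- An invariant that is inherited along a tail of each $\lim C_\beta$, yet splits the points of each top cofinality into $\omega$ stationary classes. This needs its own Fodor-type splitting argument on the sequences $C_\beta$; the order type cannot do it.
- Stationary sets with controlled non-reflection. $\square_\mu$ supplies these as fibres of $\beta\mapsto\operatorname{otp}(C_\beta)$, since each fibre meets every $\lim C_\alpha$ in at most one point. They are needed to host the lower-rank children of the root, and to make all labels outside the subtree of $s$ nonstationary below each $\alpha$ labeled $s$.

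Without these, the proposal is a heuristic rather than a proof.
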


\subsection{The  derived topology}
 If $\tau$ is a topology on the ordinals and $A$ is a class of
ordinals, let
\[
d_\tau(A)=
\{\alpha:\alpha\text{ is a limit point of }A\text{ with respect to }\tau\}
\]
denote the Cantor derivative of $A$. In what follows, we follow Bagaria's notation from \cite{Bagaria}, and use
$\tau_0$ for the interval topology. We define
the hierarchy $\langle\tau_n:n<\omega\rangle$, by induction, as follows:
\begin{itemize}
\item $\tau_0$ is the interval topology on ordinals,
\item  $\tau_{n+1}$ is the topology generated by
$\tau_n$ all its derived sets $d_{n}(A)$, where  $d_n=d_{\tau_n}$.
\end{itemize}
Note that by our convention,
$\tau_1$ is the \emph{club topology}, and $\tau_2$ is the
\emph{Mahlo topology}.

\subsection{Polymodal provability logic}

For $r\geq1$, let $\GLP_r$ denote Japaridze's polymodal provability logic in
the language with modalities $[0],\ldots,[r-1]$.  In addition to the axioms of $\GL$, it contains, for $i<j<r$, the  axioms
$
 [i]\varphi\longrightarrow[j]\varphi,
$
and
$
 \langle i\rangle\varphi\longrightarrow[j]\langle i\rangle\varphi.
$
If $(X,\theta_0,\ldots,\theta_{r-1})$ is a polytopological space, a
valuation $v$ is extended to all formulas by Boolean recursion and
$
 \llbracket\langle i\rangle\varphi\rrbracket_v
 =d_{\theta_i}(\llbracket\varphi\rrbracket_v).
$
Let
$
 \Logp(X;\theta_0,\ldots,\theta_{r-1})
$
denote the set of formulas which are valid under every such valuation.
Note that every increasing sequence $i_0<\cdots<i_{r-1} < \omega$, 
$
 \GLP_r\subseteq
 \Logp(\Omega;\tau_{i_0},\ldots,\tau_{i_{r-1}}). 
$
We have the following result.
\begin{theorem} (\cite{BeklemishevGLB})
\label{beklem}
It is consistent that  $\GLP_2=
 \Logp(\Omega;\tau_{0},\tau_{1}),$ where $\Omega \geq \aleph_\omega.$
\end{theorem}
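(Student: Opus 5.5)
The plan is to work in $L$, or more generally under Jensen's $\square_\mu$ for all infinite $\mu<\aleph_\omega$, so that Theorem~\ref{thm:clubbase} is available. Soundness $\GLP_2\subseteq\Logp(\Omega;\tau_0,\tau_1)$ holds in ZFC, as noted above, so only completeness needs proof. Since $\GLP_2$ has no nontrivial finite Kripke frames, I would not attack it directly. Instead I would pass through Beklemishev's auxiliary logic $J$, obtained from $\GLP_2$ by weakening the monotonicity axiom $[0]\varphi\to[1]\varphi$ to $[0]\varphi\to[1][0]\varphi$. I would use two facts about $J$:
\begin{enumerate}[(i)]
\item $J$ is complete for finite stratified frames $(W,R_0,R_1)$, in which $R_0,R_1$ are transitive and irreflexive, and $R_1$-successors inherit $R_0$-successors in the hierarchical way dictated by the two axioms.
\item $\GLP_2\vdash\varphi$ iff $J\vdash M(\varphi)\to\varphi$, where $M(\varphi)$ is the conjunction of $[0]\psi\to[1]\psi$ over the boxed subformulas $[0]\psi$ of $\varphi$.
\end{enumerate}
Hence, if $\GLP_2\nvdash\varphi$, some finite $J$-frame refutes $\varphi$ at a root where $M(\varphi)$ holds.

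The core step is to show that every finite rooted stratified $J$-frame is a d-morphic image of a suitable subspace of $(\Omega,\tau_0,\tau_1)$. I would build this image by nested labelings. First, take the $R_1$-skeleton, a finite $\GL$-tree. Map it onto ordinals below some $\rho<\aleph_\omega$ using an $(\calF,\rho)$-realization for the club filter sequence, supplied by Theorem~\ref{thm:clubbase}, composed with the bounded morphism of Lemma~\ref{lem:treeuniversal}. The realization clauses then say precisely that $d_1$ of a preimage set is the preimage of the $R_1$-predecessor set.

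Next, I would refine each label class to handle $R_0$. Points with no $R_1$-successors but with $R_0$-successors must be realized at ordinals of countable cofinality, or at successors, where $\tau_1$ is discrete. There the $R_0$-structure hanging below the point is realized by end-segment labelings, as in Theorem~\ref{thm:tailbase}. These labelings would be inserted into the interval $(\sup$ of previous points$,\eta)$, copying an $\omega^\omega$-pattern cofinally below $\eta$. Because $\tau_0\subseteq\tau_1$, every $\tau_0$-limit is also a $\tau_1$-limit only when the relevant set is club-positive. The copies must therefore be placed on a nonstationary set, such as a set of successor ordinals and ordinals of cofinality $\omega$ lying off a fixed club. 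Then they are $\tau_0$-visible but $\tau_1$-invisible.

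The main obstacle is the interaction axiom $\langle 0\rangle\varphi\to[1]\langle 0\rangle\varphi$, together with the requirement that $R_0$ extend $R_1$ where $M(\varphi)$ demands it. Concretely, if $\eta$ is labeled by $w$ and $w R_1 v$, then the ordinals labeled $v$ below $\eta$ must be club-positive in $\eta$. Each of them must, in turn, still see, in the interval topology, all $R_0$-successors of $w$. I would try to guarantee this by making the $R_0$-decoration depend only on the label and by repeating it in a uniform end-segment fashion below every labeled ordinal. Thus any $\tau_1$-neighbour automatically inherits the same $\tau_0$-behaviour, and, by the $\square$-coherence used in Blass's construction, the nonstationary decoration sets never disturb positivity of the club labels. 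Verifying that disjointness and both derivative clauses hold simultaneously is the delicate bookkeeping step. Once it is done, the refuting valuation pulls back to $(\Omega,\tau_0,\tau_1)$ for any $\Omega\ge\aleph_\omega$, since all labels live below $\aleph_\omega$.
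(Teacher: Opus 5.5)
The paper gives no proof of this statement. It is imported from Beklemishev's work, with the remark that $V=L$, or just $\square_\mu$ for all infinite $\mu<\aleph_\omega$, suffices. So your sketch has to stand on its own. Its architecture is reasonable: reduce to $J$, take the $R_1$-skeleton from Blass's $\square$-labellings (Theorem~\ref{thm:clubbase}), and put the extra $\tau_0$-structure on club-null sets. But your core step is false as stated, and what you defer as bookkeeping is where the real work lies.

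\textbf{The core step.} No $J$-frame with an $R_1$-edge is a d-morphic image of a subspace $Y$ of $(\Omega;\tau_0,\tau_1)$. Since $\tau_0|_Y\subseteq\tau_1|_Y$, the space $Y$ validates $[0]p\to[1]p$, and onto d-maps preserve validity. In every $J$-frame, however, $R_0\cap R_1=\emptyset$: if $wR_1v$ and $wR_0v$, the axiom $\langle0\rangle p\to[1]\langle0\rangle p$ forces $vR_0v$. Hence an edge $wR_1v$ refutes $[0]p\to[1]p$ at $w$ when $p$ fails only at $v$.

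Concretely, a stationary set is unbounded, so an ordinal labelled $w$ is a $\tau_0$-limit of the $v$-labelled ordinals although $\neg(wR_0v)$. Moreover, below a point labelled by the component root $c$, all but nonstationarily many points with a sibling label $v'$ are also $\tau_0$-limits of the $v$-points. No bookkeeping removes this.

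What you can aim for instead is the following:
\begin{enumerate}[(a)]
\item a labelling that is a d-map for $\tau_1$ onto $R_1$, and for $\tau_0$ only up to extra visibility of non-root points of the same $R_1$-component;
\item an argument that this extra visibility is harmless for subformulas of $\varphi$. If such a $v$ satisfies $\psi$, then $c\models\langle1\rangle\psi$, so $M(\varphi)$ at $c$ gives $c\models\langle0\rangle\psi$. Since $R_1$-connected points have the same $R_0$-successors, $w\models\langle0\rangle\psi$ anyway.
\end{enumerate}
This argument needs $M(\varphi)$ at the root of every component, not only at the root of the model. The direction of (ii) you actually use gives only the latter.

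In fact (ii) is false as stated. The formula $\varphi=[0][0]p\to[0][1]p$ is a theorem of $\GLP_2$. Now take the $J$-frame with $R_0=\{(r,x),(r,y)\}$ and $R_1=\{(x,y)\}$, with $p$ true only at $x$. There $M(\varphi)$ holds at $r$, since every conjunct has a consequent $[1]\chi$, which is vacuous at $r$. Yet $r\models[0][0]p$ and $x\not\models[1]p$, so $\varphi$ fails at $r$. The correct reduction uses the hereditary form $M^+(\varphi)=M(\varphi)\wedge[0]M(\varphi)\wedge[1]M(\varphi)$, which is exactly what the argument in (b) needs.

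\textbf{The decoration step.} This cannot work as described. Translated end-segment labellings consist of ordinals of countable cofinality, which are $\tau_1$-isolated, and you also confine them to a set that is club-null everywhere. This can only realize $R_0$-successors with no $R_1$-successors of their own. But as soon as some $R_0$-successor $u$ of a labelled point has $uR_1u'$, the $u$-points, which must be cofinal below that point, need stationarily many $u'$-points below them. What is needed is a recursion on the $R_0$-depth of the frame. Cofinally below each labelled point you must insert copies of full bimodal realizations of the child components. These copies must be club-null at every point labelled from the parent component, yet carry their own internal stationary structure.

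\textbf{The pull-back.} A refutation on a subspace does not automatically transfer to $\Omega$. Blass's clause~(4) only makes unlabelled points club-null. For $\tau_0$, however, the negligible sets are the bounded ones, so the labelling must be essentially total below the root, or you must show the unlabelled points carry harmless truth values.

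Your sketch supplies none of these three ingredients.
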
 
Indeed the above result holds under the assumption of axiom of constructibility, or if we just assume Jensen's $\square_\mu$ holds for every infinite cardinal $\mu<\aleph_\omega$.

\subsection{Simultaneous-stationarity hierarchy}
In this subsection we introduce some results from Bagaria \cite{Bagaria}.

\begin{definition}
\label{def:sstat}
Let $A\subseteq\alpha$.
\begin{enumerate}[(1)]
\item $A$ is \emph{$0$-s-stationary}\footnote{``s-stationary'' is an abbreviation of ``simultaneously stationary''.} in $\alpha$ if it is
      unbounded in $\alpha$.
\item For $0<n<\omega$, $A$ is \emph{$n$-s-stationary} in
      $\alpha$ if, for every $k<n$ and every pair $S,T\subseteq\alpha$ which
      are $k$-s-stationary in $\alpha$, there is $\beta\in A$
      such that both $S\cap\beta$ and $T\cap\beta$ are
      $k$-s-stationary in $\beta$.
\item The ordinal
$\alpha$ is $n$-s-reflecting if $\alpha$ is
$n$-s-stationary in $\alpha$.
\end{enumerate}

\end{definition}

For every ordinal $\alpha$ and $n<\omega$, let
\begin{align*}
  \calI^n_\alpha
  &=\{A\subseteq\alpha:A\text{ is not }n\text{-s-stationary in }\alpha\},\\
  \calF^n_\alpha
  &=(\calI^n_\alpha)^*
    =\{X\subseteq\alpha:\alpha\setminus X\in\calI^n_\alpha\}.
\end{align*}
\begin{theorem}\cite{Bagaria}
\label{thm:bagaria}
For every $n < \omega,$ the following statements hold.
\begin{enumerate}[(1)]
\item $A$ is $1$-s-stationarity iff $A$ is stationary.
\item For every $A\subseteq\On$,
      \[
        d_n(A)=\{\alpha:A\cap\alpha\text{ is }n\text{-s-stationary in }\alpha\}.
      \]
\item $A$ is $(n+1)$-s-stationary in $\alpha$ iff, for every $k\leq n$ and
      every $k$-s-stationary sets $S,T\subseteq\alpha$, we have
$
A\cap d_k(S)\cap d_k(T)\cap\alpha\neq \emptyset.
$
\item If $A$ is $n$-s-stationary in $\alpha$ and $C\subseteq\alpha$ is club,
      then $A\cap C$ is $n$-s-stationary in $\alpha$.
\item $\calI^n_\alpha$ is a proper ideal iff $\alpha$ is $n$-s-reflecting.
      In that case $\calF^n_\alpha$ is  also a proper filter and
      \[
        (\calF^n_\alpha)^+
        =\{A\subseteq\alpha:A\text{ is }n\text{-s-stationary in }\alpha\}.
      \]
\item If $n>0$, the non-stationary ideal $\NS_\alpha$ is contained in $\calI^n_\alpha$.
\item $X\in\calF^n_\alpha$ iff, for some $k<n$ and some
$k$-s-stationary set $S,T\subseteq\alpha$, we have
$
  d_k(S)\cap d_k(T)\cap\alpha\subseteq X.
$
\end{enumerate}
\end{theorem}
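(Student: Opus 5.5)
The plan is to prove all seven items at once by induction on $n$. At stage $n$ I will establish (2), (3) and (4) for level $n$, using the inductive hypothesis for all $k<n$. Items (1), (5), (6) and (7) will then follow as consequences.

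\emph{Base and item (1).} For $n=0$, item (2) holds because a $\tau_0$-limit point of $A$ is an $\alpha$ with $A\cap\alpha$ unbounded in $\alpha$. Item (4) at level $0$ is immediate, since we intersect an unbounded set with a club.
\begin{itemize}
\item Suppose $A$ is $1$-s-stationary and $C$ is a club in $\alpha$. Take disjoint unbounded $S,T\subseteq C$. The $\beta\in A$ with $S\cap\beta$ and $T\cap\beta$ unbounded are limit points of $C$, so they lie in $A\cap C$; hence $A$ is stationary.
\item Conversely, if $A$ is stationary and $S,T$ are unbounded, then $\lim S\cap\lim T$ is club when $\operatorname{cf}\alpha>\omega$, so it meets $A$.
\item When $\operatorname{cf}\alpha=\omega$ both notions fail: choose $S,T$ of order type $\omega$, which have no limit points below $\alpha$.
\end{itemize}
This gives (1), and (6) then follows from (4).

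\emph{Item (2) and the topology.} I will use an explicit base for $\tau_{n+1}$. Its basic open sets are $I\cap d_{k_1}(S_1)\cap\cdots\cap d_{k_m}(S_m)$, where $I$ is an interval and $k_i\le n$. For the inclusion $\supseteq$, assume $A\cap\alpha$ is $(n+1)$-s-stationary. I will show that every such basic neighbourhood of $\alpha$ meets $A$ below $\alpha$. The inductive form of (2) turns the conditions $\alpha\in d_{k_i}(S_i)$ into "$S_i\cap\alpha$ is $k_i$-s-stationary". The definition gives reflection of pairs; to get reflection of finitely many sets simultaneously I will iterate it, using pairwise reflection together with (4) at lower levels.

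For the inclusion $\subseteq$, if $A\cap\alpha$ is not $(n+1)$-s-stationary, a failing pair $S,T$ at some level $k\le n$ yields the neighbourhood $(\gamma,\alpha]\cap d_k(S)\cap d_k(T)$ of $\alpha$ that misses $A$. Item (3) is then just the definition rewritten via (2) at levels $k\le n$. Item (4) at level $n+1$ follows by replacing $S,T$ with $S\cap C$ and $T\cap C$, which are still $k$-s-stationary by induction, and noting that any reflection point of these lies in $\lim C\subseteq C$.

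\emph{Items (5) and (7).} By (3), $A$ fails to be $n$-s-stationary exactly when $A\cap d_k(S)\cap d_k(T)\cap\alpha=\emptyset$ for some $k<n$ and some $k$-s-stationary $S,T$. This is (7), once we note that $\calF^n_\alpha$ is upward closed. The main obstacle is closure of $\calI^n_\alpha$ under finite unions, i.e.\ closure of $\calF^n_\alpha$ under intersections. We must show that
\[
d_k(S)\cap d_k(T)\cap d_{k'}(S')\cap d_{k'}(T')\cap\alpha
\]
contains a set of the same form, and that this set is nonempty when $\alpha$ is $n$-s-reflecting.

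What I would try first is the following, assuming $k\le k'$. The set $d_k(S)\cap d_k(T)$ is in $\calF^{k+1}_\alpha$, so by induction it is $(k+1)$-s-stationary, and therefore $k'$-s-stationary when $k'>k$. I then replace $S'$ by $S'\cap d_k(S)\cap d_k(T)$ and use the inductive filter property at level $k'$ to see that this set is still $k'$-s-stationary. The key point is that $d_{k'}(\cdot)\subseteq d_k(\cdot)$ for $k\le k'$, so the whole intersection collapses to a single pair at level $k'$.

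Properness of the ideal is equivalent to $\alpha\notin\calI^n_\alpha$, which is the definition of $n$-s-reflecting. Finally, the description of $(\calF^n_\alpha)^+$ is the identity $\calF^+=\Pow(\alpha)\setminus\calI$ for the dual of a proper ideal.
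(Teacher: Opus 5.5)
Your plan has a genuine gap in the step you flag as the main obstacle. The same gap sits inside the $\supseteq$ half of (2): reducing a basic neighbourhood $I\cap d_{k_1}(S_1)\cap\cdots\cap d_{k_m}(S_m)$ to a single pair is the same problem. ``(4) at lower levels'' cannot do this, since $d_j(P)\cap d_j(Q)$ is in general not a club for $j\geq1$.

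Your reduction works only when $k<k'$. To keep $S'\cap d_k(S)\cap d_k(T)$ $k'$-s-stationary, you need $d_k(S)\cap d_k(T)\cap\alpha\in\calF^{k'}_\alpha$. This follows from $\calF^{k+1}_\alpha\subseteq\calF^{k'}_\alpha$ exactly when $k<k'$. Knowing that the set is $k'$-s-stationary would not help, and ``$(k+1)$-s-stationary, hence $k'$-s-stationary'' reverses the monotonicity once $k'>k+1$.

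For $k=k'$ the replacement fails outright. With $k=1$, let $S'$ be the points of countable cofinality, which miss $d_1(S)\cap d_1(T)$. With $k=0$, let $S'$ be the successor ordinals. This case cannot be avoided: at $n=1$ it is the only case.

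The missing lemma is: if $\alpha$ is $(k+1)$-s-reflecting and $S,T$ are $k$-s-stationary in $\alpha$, then $Z=d_k(S)\cap d_k(T)\cap\alpha$ is itself $k$-s-stationary. Granting this, for $k=k'$ use the pair $Z$, $Z'=d_k(S')\cap d_k(T')\cap\alpha$. Since $\tau_k$ is $T_1$ we have $d_kd_k\subseteq d_k$, so $d_k(Z)\cap d_k(Z')$ lies inside $d_k(S)\cap d_k(T)\cap d_k(S')\cap d_k(T')$.

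Your justification ``by induction it is $(k+1)$-s-stationary'' is circular when $k=n-1$, so prove the lemma directly.
\begin{itemize}
\item For $k=0$: $Z$ is club because $\operatorname{cf}\alpha>\omega$.
\item For $k\geq1$: fix $j<k$ and $j$-s-stationary $P,Q$. The set $S_1=S\cap d_j(P)\cap d_j(Q)$ is still $k$-s-stationary by the level-$k$ ideal property, because the discarded part lies in $\calI^{j+1}_\alpha\subseteq\calI^k_\alpha$. Applying $(k+1)$-s-reflection of $\alpha$ to $S_1$ and $T$ gives a point of $d_k(S_1)\cap d_k(T)$. Using $d_k\subseteq d_j$ and $d_jd_j\subseteq d_j$, that point lies in $Z\cap d_j(P)\cap d_j(Q)$.
\end{itemize}
So your induction should establish this intersection lemma at each level before (2), (5) and (7).

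A smaller error concerns (4) at level $0$. It is not immediate; it is false. In $\omega_1$ the successor ordinals are unbounded and miss the club of limit ordinals, so (4) holds only for $n\geq1$. Your inductive proof of (4) uses exactly this false instance in the clause $k=0$, where you replace unbounded $S,T$ by $S\cap C$ and $T\cap C$. Treat that clause separately: $A$ is $1$-s-stationary, hence stationary by (1), so it meets the club $C\cap d_0(S)\cap d_0(T)$.

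Item (6) is most easily obtained from (1) together with the fact that $n$-s-stationary sets are $1$-s-stationary for $n\geq1$. The remaining parts of your plan are fine: the base case, (1), the $\subseteq$ half of (2), (3), and properness in (5).
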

the following easy lemma show that the above notions are invariant modulo the non-stationary ideal.
\begin{lemma}
\label{lem:NSinvariance}
Suppose $1\leq n<\omega$,  $\alpha$ has uncountable cofinality, and
$A\mathbin\triangle B\in\NS_\alpha$.  Then
\begin{enumerate}[(i)]
\item $A$ is $n$-s-stationary iff $B$ is $n$-s-stationary;
\item $A\in\calF^n_\alpha$ iff $B\in\calF^n_\alpha$;
\item $A\in(\calF^n_\alpha)^+$ iff $B\in(\calF^n_\alpha)^+$.
\end{enumerate}
\end{lemma}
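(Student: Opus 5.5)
The plan is to prove (i) first and then obtain (ii) and (iii) from it. Once we know that \(n\)-s-stationarity depends only on the class of a set modulo \(\NS_\alpha\), the filter and positive-set statements follow formally. For \(n\ge1\), Theorem \ref{thm:bagaria}(6) gives \(\NS_\alpha\subseteq\calI^n_\alpha\).

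\textbf{Step (i).} Suppose \(A\) is \(n\)-s-stationary in \(\alpha\) and \(A\mathbin\triangle B\in\NS_\alpha\). Since \(\operatorname{cf}(\alpha)>\omega\), we can choose a club \(C\subseteq\alpha\) with \(C\cap(A\mathbin\triangle B)=\emptyset\), so that \(A\cap C=B\cap C\). By Theorem \ref{thm:bagaria}(4), \(A\cap C\) is \(n\)-s-stationary in \(\alpha\). Hence \(B\cap C\) is \(n\)-s-stationary.

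We then need that \(n\)-s-stationarity is upward closed, i.e.\ preserved under supersets. This is immediate from Definition \ref{def:sstat}. For \(n=0\), unboundedness passes to supersets. For \(n>0\), the condition asks for some witness \(\beta\in A\), and any such witness in a subset is also a witness in the superset. So \(B\supseteq B\cap C\) is \(n\)-s-stationary. By symmetry of \(\triangle\), the converse holds as well.

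\textbf{Step (iii).} If \(\alpha\) is \(n\)-s-reflecting, Theorem \ref{thm:bagaria}(5) identifies \((\calF^n_\alpha)^+\) with the \(n\)-s-stationary subsets of \(\alpha\), and (iii) is exactly (i). If \(\alpha\) is not \(n\)-s-reflecting, then \(\calI^n_\alpha\) is not proper. In that case \(\alpha\in\calI^n_\alpha\), so \(\emptyset\in\calF^n_\alpha\) and \((\calF^n_\alpha)^+=\emptyset\), and both sides of (iii) are false.

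Alternatively, (iii) can be proved without cases straight from the definition of positive sets. Suppose \(A\in(\calF^n_\alpha)^+\) and \(X\in\calF^n_\alpha\). Then \(X\cap C\in\calF^n_\alpha\), because its complement is \((\alpha\setminus X)\cup(\alpha\setminus C)\), a union of a set in \(\calI^n_\alpha\) and a nonstationary set. Here we use that \(\calI^n_\alpha\) is an ideal containing \(\NS_\alpha\); closure under finite unions follows from (ii)/(i)-type reasoning or from Theorem \ref{thm:bagaria}(5). So \(A\cap X\cap C\neq\emptyset\), and since \(B\cap C=A\cap C\), also \(B\cap X\neq\emptyset\).

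\textbf{Step (ii).} Note that \((\alpha\setminus A)\mathbin\triangle(\alpha\setminus B)=A\mathbin\triangle B\in\NS_\alpha\). By (i), \(\alpha\setminus A\) is \(n\)-s-stationary iff \(\alpha\setminus B\) is. That is, \(\alpha\setminus A\in\calI^n_\alpha\) iff \(\alpha\setminus B\in\calI^n_\alpha\), which is exactly (ii).

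\textbf{Main obstacle.} The only points needing care are, first, the monotonicity of \(n\)-s-stationarity, which Theorem \ref{thm:bagaria} never states explicitly but which is read off Definition \ref{def:sstat}, and second, the degenerate case of an improper filter in (iii). The uncountable cofinality hypothesis is used only to pick the club \(C\) that separates \(A\mathbin\triangle B\).
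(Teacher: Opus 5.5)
Your proposal is correct and follows the paper's proof essentially verbatim: you pick a club $C$ with $A\cap C=B\cap C$, use Theorem~\ref{thm:bagaria}(4) together with upward closure of $n$-s-stationarity for (i), pass to complements for (ii), and split (iii) into the proper and improper cases via Theorem~\ref{thm:bagaria}(5). Your explicit derivation of monotonicity from Definition~\ref{def:sstat} fills in the step the paper compresses into ``and hence so is $B$,'' and your case-free alternative for (iii) is also valid, since $X\cap C\in\calF^n_\alpha$ follows from (ii) applied to $X$ and $X\cap C$.
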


\begin{proof}
Let $C$ be a club of $\alpha$ such that $A \cap C=B \cap C.$

(i).
 If $A$ is
$n$-s-stationary, then by
Theorem \ref{thm:bagaria}(4), $A\cap C=B\cap C$ is $n$-s-stationary and hence so is $B$. By symmetry, the converse holds as well.

(ii). It follows by applying  (i) to the sets $\alpha \setminus A$ and $\alpha \setminus B$.

(iii).  If the
filter is proper, (iii) follows from Theorem \ref{thm:bagaria}(5); if it is improper,
both positive families are empty.
\end{proof}

\subsection{Soundness}

Let $\delta$ be a limit ordinal and $\alpha<\delta$. Set
\[
 \mathcal N^n_\alpha=
 \{X\subseteq\alpha:\text{for some }U\in\tau_n,
 \ \alpha\in U\text{ and }U\cap\alpha\subseteq X\}.
\]

\begin{proposition}
\label{prop:neighborhood}
For every $n<\omega$, $\mathcal N^n_\alpha=\calF^n_\alpha$.
\end{proposition}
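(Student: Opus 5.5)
We argue by induction on $n$, proving both inclusions using the description of $\calF^n_\alpha$ in Theorem~\ref{thm:bagaria}(7) and of $d_k$ in Theorem~\ref{thm:bagaria}(2).

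For $n=0$, a $\tau_0$-neighbourhood of $\alpha$ contains an interval $(\beta,\alpha]$ with $\beta<\alpha$ when $\alpha$ is a limit. So $\mathcal N^0_\alpha$ is the end-segment filter on $\alpha$, which is exactly $\calF^0_\alpha$, the sets with bounded complement. If $\alpha$ is a successor or $0$, then $\{\alpha\}$ is open, so $\mathcal N^0_\alpha$ contains $\emptyset$ and is improper. Likewise $\calF^0_\alpha$ is improper, since $\alpha$ itself is bounded in $\alpha$ and hence not $0$-s-stationary. The boundary case $n=0$ of clause (7), with no $k<0$, should be read as this direct description.

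Now let $n\ge 1$. By definition $\tau_n$ has as subbasis the sets in $\tau_0$ together with all $d_k(S)$ for $k<n$. This uses that $\tau_{k}\subseteq\tau_{k+1}$ inductively, so that the generators of $\tau_n$ are the $\tau_0$-open sets and the derived sets $d_k(A)$, $k<n$. Hence a basic $\tau_n$-neighbourhood of $\alpha$ has the form $I\cap d_{k_1}(S_1)\cap\cdots\cap d_{k_m}(S_m)$, with $I$ an interval around $\alpha$ and $\alpha\in d_{k_i}(S_i)$, that is, $S_i\cap\alpha$ is $k_i$-s-stationary in $\alpha$.

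For $\mathcal N^n_\alpha\subseteq\calF^n_\alpha$, it suffices to show each basic neighbourhood traced on $\alpha$ lies in $\calF^n_\alpha$. First, $I\cap\alpha$ is a final segment. Each $d_{k_i}(S_i)\cap\alpha$ lies in $\calF^n_\alpha$ by (7), taking $S=T=S_i\cap\alpha$. We then need $\calF^n_\alpha$ to be a filter containing final segments, which holds by Theorem~\ref{thm:bagaria}(5) when $\alpha$ is $n$-s-reflecting. When it is not, $\calF^n_\alpha=\Pow(\alpha)$ and the inclusion is trivial.

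For $\calF^n_\alpha\subseteq\mathcal N^n_\alpha$, take $X\in\calF^n_\alpha$. By (7) there are $k<n$ and $k$-s-stationary $S,T\subseteq\alpha$ with $d_k(S)\cap d_k(T)\cap\alpha\subseteq X$. Let $U=d_k(S)\cap d_k(T)\cup(\alpha,\infty)$; a cleaner choice is $U=d_k(S)\cap d_k(T)$ together with $\alpha$ handled separately. Since $\alpha$ need not lie in $d_k(S)\cap d_k(T)$, this needs care. By (2), $\alpha\in d_k(S)$ exactly when $S\cap\alpha=S$ is $k$-s-stationary in $\alpha$, which is our hypothesis. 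So $\alpha\in U:=d_k(S)\cap d_k(T)$, which is $\tau_n$-open, and $U\cap\alpha\subseteq X$. Then $X\supseteq U\cap\alpha$ gives $X\in\mathcal N^n_\alpha$, since the definition of $\mathcal N^n_\alpha$ is upward closed.

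The only delicate points are the following. First, $\tau_n$ is generated by the $d_k$ for all $k<n$, not only $d_{n-1}$; this follows from the inductive definition, since $\tau_{k+1}\supseteq\tau_k$. Second, the case where $\calF^n_\alpha$ is improper must match: if $\alpha$ is not $n$-s-reflecting, then by (3) some $d_k(S)\cap d_k(T)\cap\alpha=\emptyset$ with $S,T$ $k$-s-stationary. The open set $d_k(S)\cap d_k(T)$ then contains $\alpha$ and meets $\alpha$ in $\emptyset$, so both families are improper. I expect the main obstacle to be keeping this bookkeeping of improper filters consistent with clause (7).
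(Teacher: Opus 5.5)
Your proof is correct and follows essentially the same route as the paper. For $\calF^n_\alpha\subseteq\mathcal N^n_\alpha$ you use Theorem~\ref{thm:bagaria}(7) to get $k<n$ and $k$-s-stationary $S,T\subseteq\alpha$, note that $\alpha\in d_k(S)\cap d_k(T)\in\tau_n$, and for the reverse inclusion you decompose a basic $\tau_n$-neighbourhood as $I\cap\bigcap_i d_{k_i}(S_i)$ with $k_i<n$. Your explicit check that the improper cases match is a bit more careful than the paper's ``otherwise the conclusion is clear,'' and the discarded first choice $U=d_k(S)\cap d_k(T)\cup(\alpha,\infty)$ is superfluous and can simply be deleted.
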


\begin{proof}
This is clear for $n=0$, thus suppose that $n>0$. We may assume that $\calF^n_\alpha$ is proper, as otherwise the conclusion is clear.

First suppose that $X \in \calF^n_\alpha.$ Then   $\alpha \setminus X \in \calI^n_\alpha$, thus, by Theorem \ref{thm:bagaria}(2), there are $k<n$ and
$k$-s-stationary set $S,T\subseteq\alpha$ such that  $(\alpha \setminus X) \cap  d_k(S)\cap d_k(T)=\emptyset.$
Note that $\alpha \in d_k(S)\cap d_k(T) \in \tau_n$  and
$d_k(S)\cap d_k(T)\cap\alpha \subseteq X.$
By its definition, $d_k(S)\cap d_k(T)\cap\alpha \in \mathcal N^n_\alpha$ and hence $X \in \mathcal N^n_\alpha.$

Conversely, suppose $X \in \mathcal N^n_\alpha.$ Take $U\in\tau_n$ with $\alpha\in U$ such that $U\cap\alpha\subseteq X$.  Let $V\subseteq U$ be a basic open neighborhood,
hence,
\[
  V=I\cap\bigcap_{i<r}d_{k_i}(A_i),
 \qquad k_i<n,
\]
where $I$ is open in the interval topology and contains $\alpha$, and the sets
$A_i\cap\alpha$ are $k_i$-s-stationary. But then
$
 d_{k_i}(A_i)\cap\alpha=d_{k_i}(A_i\cap\alpha)\cap\alpha
 \in\calF^n_\alpha,
$
and $I\cap\alpha \in \calF^0_\alpha\subseteq\calF^n_\alpha$.  It follows that $V\cap\alpha\in \calF^n_\alpha$ and $V \cap \alpha \subseteq U \cap \alpha \subseteq X$. Thus,
$X\in\calF^n_\alpha$.
\end{proof}

Every $\tau_n$ refines the scattered interval topology, so every
$(\delta,\tau_n)$ is scattered. Thus by the work of Esakia \cite{Esakia}, and Proposition \ref{prop:neighborhood}, we have the following soundness result.

\begin{corollary}[Soundness]
\label{cor:soundness}
For every limit ordinal $\delta$ and every $n<\omega$,
\[
  \GL\subseteq\Logd(\delta,\tau_n).
\]
Furthermore, filter validity for $\langle\calF^n_\alpha:\alpha<\delta\rangle$ is the same
as derivative validity on $(\delta,\tau_n)$.
\end{corollary}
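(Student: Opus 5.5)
The plan is to reduce both assertions of Corollary~\ref{cor:soundness} to facts already available: Esakia's theorem for the inclusion, and Proposition~\ref{prop:neighborhood} for the comparison of the two semantics.

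\emph{Inclusion.} First I will check that $(\delta,\tau_n)$ is scattered. Since $\tau_0\subseteq\tau_n$ by construction, every $\tau_0$-open set is $\tau_n$-open. Take any nonempty $A\subseteq\delta$ and let $\beta=\min A$. The set $[0,\beta+1)$ is $\tau_0$-open, so it is $\tau_n$-open, and it meets $A$ only in $\beta$. Hence $\beta$ is an isolated point of $A$ in $\tau_n$, and $(\delta,\tau_n)$ is scattered. Esakia's theorem then says that every theorem of $\GL$ is valid under derivative semantics on $(\delta,\tau_n)$, which gives $\GL\subseteq\Logd(\delta,\tau_n)$.

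\emph{Filter semantics.} For the second assertion it suffices to show that for every $A\subseteq\delta$,
\[
d_{\tau_n}(A)\cap\delta=d_{\calF}(A),\qquad\text{where }\calF=\langle\calF^n_\alpha:\alpha<\delta\rangle.
\]
Both semantics interpret the Boolean connectives identically and differ only in the diamond. So an induction on formulas, under the same valuation, yields identical truth sets, and hence identical validity.

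To prove the identity, fix $\alpha<\delta$. The point $\alpha$ lies in $d_{\tau_n}(A)$ iff every $\tau_n$-open $U\ni\alpha$ meets $A\setminus\{\alpha\}$. The key step is that such intersections happen below $\alpha$: the interval $(\gamma,\alpha]$ is $\tau_0$-open and lies inside $\alpha+1$. Shrinking $U$ to $U\cap(\gamma,\alpha]$ therefore reduces the condition to ``$U\cap\alpha$ meets $A\cap\alpha$ for every $\tau_n$-open $U\ni\alpha$.'' By definition of $\mathcal N^n_\alpha$, this says that $A\cap\alpha$ meets every member of $\mathcal N^n_\alpha$, i.e.\ $A\cap\alpha\in(\mathcal N^n_\alpha)^+$. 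By Proposition~\ref{prop:neighborhood}, this is the same as $A\cap\alpha\in(\calF^n_\alpha)^+$, which is exactly the statement $\alpha\in d_{\calF}(A)$.

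\emph{Degenerate case.} I also need to handle $\alpha$ that are isolated in $\tau_n$, that is, $\{\alpha\}\in\tau_n$. Then $\emptyset\in\mathcal N^n_\alpha$, so the filter is improper and its positive family is empty. This is consistent with the paper's convention allowing improper filters, and both sides of the identity fail at $\alpha$.

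The only point needing care is this local reduction, which relies on $\tau_0\subseteq\tau_n$. I do not expect a real obstacle, since Proposition~\ref{prop:neighborhood} carries the substantive content.
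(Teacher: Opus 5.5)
Your proposal is correct and follows the paper's argument. Both use $\tau_0\subseteq\tau_n$ to get scatteredness and then Esakia's theorem for the inclusion, and both use Proposition~\ref{prop:neighborhood} to match the two semantics; you additionally spell out the local identity $d_{\tau_n}(A)\cap\delta=d_{\calF}(A)$ (via the $\tau_0$-open set $\alpha+1$), which the paper's one-line proof leaves implicit.
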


\section{Reflection and coherent systems}
\label{sec:forcing}

In this section, we recall some results from \cite{Witzany}, and prove some further results which are needed for the later sections.
Let $\lambda$ be a regular uncountable cardinal.  Let $\Reg(\lambda)$ denote the set of regular
cardinals below $\lambda$ and $\Sing(\lambda)$ be its complement modulo a
bounded initial segment.  For a stationary  set $S\subseteq\lambda$, let
\[
  \Tr_\lambda(S)=\{\beta<\lambda:S\cap\beta
  \text{ is stationary in }\beta\}.
\]
Also, for stationary set $S,T\subseteq\lambda$, let us define
\[
  S<T\quad\Longleftrightarrow\quad
  T\setminus\Tr_\lambda(S)\in\NS_\lambda.
\]

\begin{definition}
\label{def:witzany-system}
\begin{enumerate}
\item A \emph{system of normal measures} up to a regular uncountable cardinal
$\kappa$ is a sequence
$
   \mathcal S=\langle\mathcal S_\alpha:\alpha\leq\kappa\rangle,
$
where every member of $\mathcal S_\alpha$ is a normal measure on
$\alpha$.
\item The system $\mathcal S$ is \emph{closed} if, whenever
$\alpha\leq\kappa$, $W\in\mathcal S_\alpha$, and
$
   j_W:V\longrightarrow M_W=\operatorname{Ult}(V,W)
$
is the corresponding ultrapower embedding, then
$
   (j_W\mathcal S)(\alpha)\subseteq\mathcal S_\alpha.
$

\item For $U,W\in\mathcal S_\alpha$ put
$
   U\mathrel{\triangleleft_{\mathcal S}}W
  \Longleftrightarrow
   U\in(j_W\mathcal S)(\alpha).
$
The relation $\triangleleft_{\mathcal S}$ is transitive and
well-founded.

\item The measures in $\mathcal S_\alpha$ are called \emph{separable} if
there is a family
$
   \langle X_U^\alpha:U\in\mathcal S_\alpha\rangle
$
of subsets of $\alpha$ such that, for all
$U,W\in\mathcal S_\alpha$,
$X_U^\alpha\in W$
   iff
  $ U=W.$
Such a family will be called a \emph{separating family}.

\item We call
$\mathcal S$ a \emph{closed system of separable measures} if it is
closed and each
$\mathcal S_\alpha$ is separable.
\end{enumerate}
\end{definition}

\begin{definition}
Suppose $P$ is a well-founded poset. The reflection
ordering \emph{realizes} $P$ if there is a maximal antichain
$
   \langle X_p:p\in P\rangle
$
of stationary subsets of $\Reg(\lambda)$ such that, for all
$p,q\in P$ and all stationary sets
$S\subseteq X_p$ and $T\subseteq X_q$,
\[
   S<T
   \quad\Longleftrightarrow\quad
   p<_Pq.
\]
\end{definition}

We use the following theorem of Witzany in an essential way.

\begin{theorem}(\cite[Theorem~1]{Witzany})
\label{thm:witzany-general}
Assume GCH, and let
$
   \mathcal S
   =\langle\mathcal S_\alpha:\alpha\leq\kappa\rangle
$
be a closed system of separable normal measures.  Then there is a
forcing extension $V[G]$, preserving cardinals, cofinalities, and
GCH, in which the reflection ordering of stationary subsets of
$\Reg(\kappa)$ realizes the ground-model well-founded poset
$
   (\mathcal S_\kappa,\triangleleft_{\mathcal S}).
$
\end{theorem}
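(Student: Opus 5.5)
The plan is to build $V[G]$ by a reverse Easton iteration $\langle P_\alpha,\dot Q_\alpha:\alpha\leq\kappa\rangle$. Nontrivial forcing happens only at inaccessible $\alpha\leq\kappa$ with $\mathcal S_\alpha\neq\emptyset$. The stage-$\alpha$ forcing $\dot Q_\alpha$ has two parts.

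\emph{Labelling.} First add a generic function $f_\alpha:\Reg(\alpha)\to\mathcal S_\alpha$ with bounded conditions. Because $\alpha$ is inaccessible, this forcing is $<\alpha$-closed and has size $\alpha$. Put $X^\alpha_U=f_\alpha^{-1}(U)$ for $U\in\mathcal S_\alpha$; these sets partition $\Reg(\alpha)$. The ground-model separating family $\langle X_U^\alpha\rangle$ is what will let me choose the value of the labelling at $\alpha$ inside ultrapowers: in $M_W$, the point $\alpha$ carries a label in $j_W(\mathcal S)(\alpha)\subseteq\mathcal S_\alpha$, and closure of $\mathcal S$ makes this label meaningful in the model.

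\emph{Killing unwanted reflection.} Then do an iteration, with $<\alpha$ supports, of club shootings. For each pair $U,W\in\mathcal S_\alpha$ with $U\not\triangleleft_{\mathcal S}W$ (and $U\neq W$), and each name for a subset $S\subseteq X^\alpha_U$, shoot a club through $\alpha\setminus(X^\alpha_W\cap\Tr_\alpha(S))$. GCH and bookkeeping of length $\alpha^+$ cover all such $S$. The same kind of shooting also makes the sets $X^\alpha_U$ with $U\triangleleft_{\mathcal S} U$ behave correctly; this case is vacuous because $\triangleleft_{\mathcal S}$ is irreflexive.

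These stages give the direction $S<T\Rightarrow U\triangleleft_{\mathcal S}W$ for $S\subseteq X_U$ and $T\subseteq X_W$. Maximality of the antichain is automatic, since the labelling is total.

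\emph{Standard preservation.} Each club shooting kills only sets that must be nonstationary, so it is $\alpha$-distributive. The argument is the Harrington--Shelah / Magidor style: the complement of the shot set contains $\Sing(\alpha)$ together with the reflection points that should survive. With Easton support and GCH, this gives preservation of cardinals, cofinalities and GCH, and also that each $X^\kappa_U$ remains stationary.

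\emph{Wanted reflection (key step).} Fix $U\triangleleft_{\mathcal S}W$ in $\mathcal S_\kappa$ and a stationary $S\subseteq X^\kappa_U$. I must show that $X^\kappa_W\setminus\Tr_\kappa(S)$ is nonstationary. It suffices to show the following: for every stationary $T\subseteq X^\kappa_W$ and every club $C$, there is $\beta\in T\cap C$ with $S\cap\beta$ stationary.

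To do this, lift $j_W:V\to M_W$ to $V[G]$. Since $W$ is normal and the iteration has Easton support, $j_W(P_\kappa)$ factors as $P_\kappa*\dot Q_\kappa*\dot R$. In $M_W$, choose the stage-$\kappa$ labelling so that $f(\kappa)$ agrees with the generic label; this uses separability to code the correct measure. By closure, $U\in j_W(\mathcal S)(\kappa)$. So in $M_W$'s stage-$\kappa$ forcing the pair $(U,W)$ is \emph{not} targeted, and $S$ is never killed at $\kappa$. Build an $\dot R$-generic over $M_W[G]$ in $V[G]$ using $\kappa^+$-closure and GCH counting. Then lift through the top-stage club shootings with a master condition $\bigcup j``g$, adding the point $\kappa$.

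Reflection to $\kappa$ then holds in $M_W[j(G)]$, and elementarity pulls it back to a $\beta\in T\cap C$ below $\kappa$. One detail needs care here: $T$ ranges over stationary sets rather than measure-one sets. I handle this by using that $\kappa\in j(T)$ for $W$-positive $T$, and by arranging, via the separating family, that $X^\kappa_W\in W$ modulo the generic.

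\emph{Main obstacle.} The hardest step is the master-condition argument. I must show that $j``g$, together with the ordinal $\kappa$, is a condition in each of the $j(\cdot)$-images of the club-shooting forcings at stage $\kappa$. That is, $\kappa$ must not lie in any set whose complement we shot a club through. This amounts to checking that $\kappa\notin X_{W'}\cap\Tr(S')$ whenever $U'\not\triangleleft W'$, and it is exactly where closure $(j_W\mathcal S)(\kappa)\subseteq\mathcal S_\kappa$ and the definition of $\triangleleft_{\mathcal S}$ must be used precisely. I would first try to verify this directly in $M_W$, using that the label of $\kappa$ in $j(G)$ is $W$.
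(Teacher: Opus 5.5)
Your argument has genuine gaps in the wanted-reflection step. (The paper does not reprove this theorem; it quotes Witzany.)

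First, $j_W(P_\kappa)$ does not factor as $P_\kappa*\dot Q_\kappa*\dot R$. The stage-$\kappa$ iterand of $j_W(P)$ is defined in $M_W$ from $j_W(\mathcal S)(\kappa)=\{U:U\triangleleft_{\mathcal S}W\}$. This set omits $W$ itself and every measure not below $W$. Your $V$-generic labelling $f_\kappa$ takes the value $W$ on a stationary set, so it is not generic for the stage-$\kappa$ forcing of $M_W$. Since the later stages of $j_W(P)$ add no subsets of $\kappa$, $f_\kappa$ could lie in $M_W[j(G)]$ only if it were coded into that stage-$\kappa$ generic, and you do not arrange this. If $W$ has a unique $\triangleleft_{\mathcal S}$-predecessor, the stage-$\kappa$ forcing of $M_W$ is trivial under your conventions, so $f_\kappa\notin M_W[j(G)]$ outright. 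Without this, $X^\kappa_U$ and the stationary $S$ are not in the target model. You therefore cannot lift $j_W$ through stage $\kappa$, and ``$S$ reflects at $\kappa$ in $M_W[j(G)]$'' has no content. Choosing the label of $\kappa$ only concerns $j(f_\kappa)(\kappa)$, i.e.\ stage $j(\kappa)$, and does not address this.

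Second, and decisively, even a successful lift yields only $\Tr_\kappa(S)\in W^*$ for one lifted measure $W^*$. Realization requires $X^\kappa_W\setminus\Tr_\kappa(S)\in\NS_\kappa$, i.e.\ reflection on every stationary $T\subseteq X^\kappa_W$. Such $T$ need not be positive for any lift of $W$: take $T=X^\kappa_W\cap T'$ with $T'\in V\setminus W$ stationary, since $\kappa\notin j_W(T')$.

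In your model the required reflection actually fails. Let $N$ be the set of inaccessible non-Mahlo cardinals below $\kappa$; it is stationary because $\kappa$ is Mahlo.
\begin{itemize}
\item Genericity of $f_\kappa$ over $V[G_\kappa]$ makes $N\cap X^\kappa_W$ stationary.
\item Every set you shoot a club through contains $N$, since no subset of $\Reg(\kappa)$ reflects at a non-Mahlo point. So this stationarity survives the iteration.
\item No $S\subseteq X^\kappa_U$ reflects at any point of $N$.
\end{itemize}
Hence $S<N\cap X^\kappa_W$ fails although $U\triangleleft_{\mathcal S}W$.

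What is missing is a mechanism that removes from $X_W$ the points where the required reflection fails. For example, you could also shoot clubs through $\Tr_\kappa(S)\cup(\kappa\setminus X_W)$, and use $W$ via lifting only to show that $X_W$ and the relevant sets stay stationary. In the paper's summary of Witzany's construction, the antichain is the ground-model separating family itself, so $X_W\in W$ and the same sets are visible in $M_W$. Independent generic labellings at each level give no such link.

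Finally, you should not exempt $U=W$ from the killing. Since $U\not\triangleleft_{\mathcal S}U$, stationary subsets of $X_U$ must also be prevented from reflecting fully into $X_U$.
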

The proof of the above theorem is based on an Easton support iteration,
and its proof  gives somewhat more information.   Suppose that
$
   \langle X_U:U\in\mathcal S_\lambda\rangle
$
is a separating family for $\mathcal S_\lambda$.  In the extension after the forcing at $\lambda$,
the sets $X_U$ form a maximal antichain modulo $\NS_\lambda$ in
$\Reg(\lambda)$.  Moreover, if
$|\mathcal S_\lambda|<\lambda$ and
$S\subseteq\Reg(\lambda)$ is stationary, then the sets $ \Tr_\lambda(S)\cap\Reg(\lambda)$ and
\[
\bigcup\{X_U: \text{there is }W\in\mathcal S_\lambda
   \text{ such that }
   S\cap X_W\text{ is stationary and }
   W\triangleleft_{\mathcal S}U\}
   \]
   are equal modulo the non-stationary ideal $\NS_\lambda$.
In addition,  if
$S\subseteq\Sing(\lambda)$ is stationary, then
$\Reg(\lambda)\setminus\Tr_\lambda(S)\in\NS_\lambda.
$
Finally, the iteration has the property that no forcing
after stage $\lambda$ adds a new subset of $\lambda$.  Consequently
all statements about subsets, stationarity, and traces at $\lambda$
obtained after the $\lambda$-stage remain true in the final
extension.
We now specialize the step $\lambda$ of Witzany's iteration to the situation needed in the
present paper.

\begin{theorem}\cite{Witzany}
\label{thm:witzanylocal}
Suppose that we are at stage $\lambda$ of Witzany's iteration, and we have a
$\triangleleft_{\mathcal S}$-increasing sequence
$\mathcal S_\lambda =\langle U_\eta:\eta<\rho\rangle$ of measures, where $\rho<\lambda.$
Let
also
  $ \langle E_\eta:\eta<\rho\rangle$
be a separating family.
Then, in the extension after stage $\lambda$, and hence in the final
extension, the $E_\eta$ form a maximal antichain modulo
$\NS_\lambda$ in $\Reg(\lambda)$.
Furthermore,
\begin{itemize}
\item[(1)] If $S\subseteq\lambda$ is stationary,
  $ \Tr_\lambda(S)\cap\Reg(\lambda)
   \equiv_{\NS_\lambda}
   \bigcup_{\mu(S)<\eta<\rho}E_\eta,$
   where $\mu(S) = \min\{\xi<\rho:S\cap E_\xi\text{ is stationary}\}$.

   \item[(2)]
If $S\subseteq\lambda$ and $S\cap\Sing(\lambda)$ is stationary, then
$
   \Reg(\lambda)\setminus\Tr_\lambda(S)\in\NS_\lambda.
$
\end{itemize}
No forcing after stage $\lambda$ adds a subset of $\lambda$.
\end{theorem}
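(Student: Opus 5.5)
Theorem~\ref{thm:witzanylocal} is the specialization of the stage-$\lambda$ analysis in the proof of Theorem~\ref{thm:witzany-general}. I would therefore derive it from the general facts recorded after that theorem, rather than reprove Witzany's iteration from scratch.

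The derivation would proceed in three steps.

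\emph{Step 1: set up the data.} Since $\mathcal S_\lambda=\langle U_\eta:\eta<\rho\rangle$ with $\rho<\lambda$, we have $|\mathcal S_\lambda|<\lambda$. The family $\langle E_\eta:\eta<\rho\rangle$ is a separating family, so the general description applies with $X_{U_\eta}=E_\eta$. This immediately gives that the $E_\eta$ form a maximal antichain modulo $\NS_\lambda$ in $\Reg(\lambda)$ after stage $\lambda$.

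\emph{Step 2: prove (1).} Fix a stationary $S\subseteq\lambda$, and assume first that $S\cap\Reg(\lambda)$ is stationary. By maximality of the antichain, $S\cap E_\xi$ is stationary for some $\xi$, so $\mu(S)$ is defined. The general statement says that $\Tr_\lambda(S)\cap\Reg(\lambda)$ agrees modulo $\NS_\lambda$ with
\[
\bigcup\{E_\eta:\exists \xi\,(S\cap E_\xi\text{ stationary and }U_\xi\triangleleft_{\mathcal S}U_\eta)\}.
\]
I would read the hypothesis ``$\triangleleft_{\mathcal S}$-increasing'' as saying that $\triangleleft_{\mathcal S}$ restricted to $\mathcal S_\lambda$ is exactly the ordinal order: $U_\xi\triangleleft U_\eta$ iff $\xi<\eta$. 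This is the point to verify; at worst it is an assumption implicit in how the sequence is chosen. Granting it, the condition on $\eta$ becomes: there is $\xi<\eta$ with $S\cap E_\xi$ stationary. That holds iff $\mu(S)<\eta$, which yields (1).

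One subtlety remains: the general statement was phrased for $S\subseteq\Reg(\lambda)$. For arbitrary stationary $S$, I would split $S=(S\cap\Reg(\lambda))\cup(S\cap\Sing(\lambda))$, use that traces are additive, and treat the singular part separately.

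\emph{Step 3: prove (2) and finish.} Statement (2) is the general clause for stationary subsets of $\Sing(\lambda)$, applied to $S\cap\Sing(\lambda)$, together with monotonicity of $\Tr_\lambda$. Combining (2) with Step 2 also settles (1) when only the singular part is stationary. In that case one needs $\mu(S)$ to be defined, and the right-hand side of (1) is then a union over $\eta>\mu(S)$ that differs from $\Reg(\lambda)$ by $\bigcup_{\eta\le\mu(S)}E_\eta$. So I expect (1) is intended for $S\subseteq\Reg(\lambda)$, or for $S$ meeting $\Reg(\lambda)$ stationarily, and I would state that restriction explicitly.

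\emph{Final clause.} That nothing after stage $\lambda$ adds subsets of $\lambda$ is quoted directly from the iteration. It transfers the stationarity, trace, and antichain statements to the final model, since these are computed from $\Pow(\lambda)$ and the club filter on $\lambda$, which are unchanged.

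The main obstacle is the identification of $\triangleleft_{\mathcal S}$ on $\mathcal S_\lambda$ with the index order, which the bookkeeping in Step 2 depends on.
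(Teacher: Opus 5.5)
Your route is the paper's: the theorem is recorded there, without further argument, as the specialization of the stage-$\lambda$ facts listed after Theorem~\ref{thm:witzany-general}, with $X_{U_\eta}=E_\eta$ and $|\mathcal S_\lambda|<\lambda$. It is transferred to the final model by the no-new-subsets clause, exactly as you describe. Two points need fixing.

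First, what you call the main obstacle is not one. $\triangleleft_{\mathcal S}$ is transitive and well-founded, hence irreflexive, so ``increasing'' ($\xi<\eta\Rightarrow U_\xi\triangleleft_{\mathcal S}U_\eta$) already yields the converse. Indeed, $U_\xi\triangleleft_{\mathcal S}U_\eta$ with $\eta\le\xi$ would give $U_\xi\triangleleft_{\mathcal S}U_\xi$, either directly or via $U_\eta\triangleleft_{\mathcal S}U_\xi$ and transitivity. So the index condition in the general clause is exactly $\mu(S)<\eta$, with no extra assumption.

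Second, your restriction of (1) is only half right. In Step 2 you apply the clause stated for $S\subseteq\Reg(\lambda)$ to any $S$ with $S\cap\Reg(\lambda)$ stationary. Your fallback ``$S$ meeting $\Reg(\lambda)$ stationarily'' is also too weak. If $S\cap\Sing(\lambda)$ is stationary as well, then (2) itself gives $\Tr_\lambda(S)\cap\Reg(\lambda)\equivNS\Reg(\lambda)\equivNS\bigcup_{\eta<\rho}E_\eta$. This differs modulo $\NS_\lambda$ from $\bigcup_{\mu(S)<\eta<\rho}E_\eta$, because $E_{\mu(S)}$ is stationary and almost disjoint from the latter. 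So (1) is false as literally stated for such $S$, and it is undefined when $S\cap\Reg(\lambda)$ is nonstationary.

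The right hypothesis is $S\cap\Sing(\lambda)\in\NS_\lambda$. Let $C$ be a club disjoint from $S\cap\Sing(\lambda)$. For every regular uncountable $\beta<\lambda$ that is a limit point of $C$, $S\cap\beta$ is stationary in $\beta$ iff $S\cap\Reg(\lambda)\cap\beta$ is. Also $\mu(S)=\mu(S\cap\Reg(\lambda))$, so the general clause applies to $S\cap\Reg(\lambda)$. This is how the paper actually uses the result in Lemma~\ref{lem:mahlo-trace-tail}: it sets $\mu(S)=-1$ when $S\cap\Sing(\lambda)$ is stationary, which makes (1) consistent with (2).
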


\begin{definition}
\label{def:coherent}
Let $0<\rho<\lambda$.  A  family $ \mathcal E^\lambda=\langle E_\eta^\lambda:\eta<\rho\rangle$ is called a
\emph{coherent  Reflection system of
length $\rho$ at $\lambda$} if:
\begin{enumerate}[(C1)]
\item The sets $E_\eta^\lambda$ are pairwise disjoint stationary subsets of
      $\Reg(\lambda)$ and
      $\Reg(\lambda)\setminus\bigcup_{\eta<\rho}E_\eta^\lambda$ is
      nonstationary.
\item For stationary sets $S\subseteq E_\xi^\lambda$ and
      $T\subseteq E_\eta^\lambda$, we have $S<T$ iff $\xi<\eta.$ Furthermore,
      items (1) and (2) of Theorem \ref{thm:witzanylocal} hold for each $E_\eta^\lambda$.
\item If $0<\eta<\rho$ and $\alpha\in E_\eta^\lambda$, then
$ \mathcal E^\alpha=\langle E_\xi^\alpha: \xi<\eta \rangle$
  forms a coherent  Reflection system of length $\eta$ at
      $\alpha$, where     $E_\xi^\alpha=E_\xi^\lambda\cap\alpha.$

\item If $\alpha\in E_0^\lambda$, then $\Reg(\alpha)$ is non-stationary in
      $\alpha$.
\end{enumerate}
\end{definition}
Fix a sequence $ \mathcal E^\lambda$ as above, and
for $A\subseteq\lambda$ set
\[
  \sigma_{\mathcal E^\lambda}(A)=
  \{\eta<\rho:A\cap E_\eta^\lambda\text{ is stationary in }\lambda\}.
\]
When there is no confusion, we write $\sigma_\lambda(A)=\sigma_{\mathcal E^\lambda}(A)$.
\begin{lemma}
\label{lem:regularsupport}
Suppose $S\subseteq\Reg(\lambda)$ and $1\leq j<\omega$. Then
$
  d_j(S)\cap\lambda\subseteq\Reg(\lambda).
$
In particular $\Tr_\lambda(S)\subseteq\Reg(\lambda)$.
\end{lemma}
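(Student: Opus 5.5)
The plan is to reduce to the case $j=1$, i.e.\ to ordinary stationarity, and then show that a stationary subset of $\Reg(\lambda)$ cannot reflect at a singular ordinal or at an ordinal of countable cofinality.

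\emph{Step 1: reduction to $j=1$.} Fix $\alpha\in d_j(S)\cap\lambda$. By Theorem~\ref{thm:bagaria}(2), $S\cap\alpha$ is $j$-s-stationary in $\alpha$. In particular the ideal $\calI^j_\alpha$ is not all of $\Pow(\alpha)$, so $\alpha$ is $j$-s-reflecting and $\calI^j_\alpha$ is proper by Theorem~\ref{thm:bagaria}(5). Since $j\geq1$, Theorem~\ref{thm:bagaria}(6) gives $\NS_\alpha\subseteq\calI^j_\alpha$. Hence $S\cap\alpha\notin\NS_\alpha$, i.e.\ $S\cap\alpha$ is stationary in $\alpha$. (Alternatively, one can argue directly from Definition~\ref{def:sstat}: $j$-s-stationarity implies $1$-s-stationarity, which is stationarity by Theorem~\ref{thm:bagaria}(1).) So it suffices to show that whenever $S\subseteq\Reg(\lambda)$ and $S\cap\alpha$ is stationary in $\alpha$, the ordinal $\alpha$ is regular.

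\emph{Step 2: excluding small cofinality and successors.} If $\alpha$ is a successor ordinal, or a limit of cofinality $\omega$, then no subset of $\alpha$ is stationary. Indeed, one can find two disjoint clubs: in the successor case $\{\alpha-1\}$ and the empty set are both closed and cofinal in the relevant sense, and in the cofinality-$\omega$ case one takes two disjoint cofinal $\omega$-sequences, each of which is closed because it has no limit points below $\alpha$. So $\alpha$ must have uncountable cofinality.

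\emph{Step 3: the main case, $\alpha$ singular.} Suppose $\operatorname{cf}(\alpha)=\mu$ with $\omega<\mu<\alpha$. Choose a club $C\subseteq\alpha$ of order type $\mu$ with $\min C>\mu$, and let $C'$ be the set of its limit points below $\alpha$; this is again club in $\alpha$. Each $\gamma\in C'$ is a limit of an increasing sequence from $C$ of length some limit ordinal $<\mu$, so $\operatorname{cf}(\gamma)<\mu<\gamma$. Thus $\gamma$ is singular and $C'\cap\Reg(\lambda)=\emptyset$. Consequently $S\cap\alpha\cap C'=\emptyset$, contradicting the stationarity of $S\cap\alpha$. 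Hence $\alpha$ is a regular uncountable cardinal, so $\alpha\in\Reg(\lambda)$.

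The ``in particular'' clause is the case $j=1$, since $\Tr_\lambda(S)=d_1(S)\cap\lambda$ by Theorem~\ref{thm:bagaria}(1),(2). The only point requiring care is Step 1, namely checking that the improper-filter convention does not cause trouble. This is handled by observing that the existence of a single $j$-s-stationary set already forces $\calI^j_\alpha$ to be proper.
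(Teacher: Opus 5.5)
Your proof is correct and is essentially the paper's argument: you reduce $j$-s-stationarity of $S\cap\alpha$ to plain stationarity, and at a singular $\alpha$ you exhibit a club of singular ordinals missing $S$. The one difference is that you treat every $\alpha$ with $\omega<\operatorname{cf}(\alpha)<\alpha$ uniformly, using the limit points of a club of order type $\operatorname{cf}(\alpha)$. The paper instead handles non-cardinal limits separately, noting that the cardinals below them are bounded, so $S\cap\alpha$ is not even unbounded.

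One small repair is needed in Step 2: your successor case is misjustified. The empty set is not cofinal in $\gamma+1$; in fact every club of $\gamma+1$ contains $\gamma$, so $\{\gamma\}$ meets all of them. Dispose of successors as the paper does: they are isolated in $\tau_0$, hence in every finer $\tau_j$, so they never lie in $d_j(S)$.
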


\begin{proof}
If $\beta$ is a successor ordinal, then it is isolated in $\tau_0$ and remains isolated in every other
$\tau_j$.  If $\beta$ is a limit ordinal which is not a cardinal, then the cardinals below
$\beta$ are bounded, so $S\cap\beta$ is not  $0$-s-stationary.  Suppose
now that $\beta$ is a singular cardinal and let
$\mu=\operatorname{cf}(\beta)<\beta$.
Let $\langle\beta_i:i<\mu\rangle$ be a continuous and cofinal sequence containing no regular cardinal.  Then $\{\beta_i:i<\mu  \}$ is a club and is disjoint from $S$.  Hence $S\cap\beta$ is non-stationary and cannot
be $j$-s-stationary.  By Theorem \ref{thm:bagaria}(2), $\beta \notin d_1(S)$.  The result follows.
\end{proof}

\begin{lemma}
\label{lem:rankzero}
Suppose $S\subseteq\Reg(\lambda)$ and $j\geq1$. Then
$
  d_j(S)\cap E_0^\lambda=\emptyset.
$
\end{lemma}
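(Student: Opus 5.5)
We argue pointwise: fix $\alpha\in E_0^\lambda$, and we will show $\alpha\notin d_j(S)$. By Theorem \ref{thm:bagaria}(2), it suffices to show that $S\cap\alpha$ is not $j$-s-stationary in $\alpha$. The key input is condition (C4) of Definition \ref{def:coherent}: since $\alpha\in E_0^\lambda$, the set $\Reg(\alpha)$ is non-stationary in $\alpha$. We may also use that $E_0^\lambda\subseteq\Reg(\lambda)$ by (C1), so $\alpha$ is a regular cardinal. In particular $\alpha$ is a limit ordinal, and $\alpha$ has uncountable cofinality provided $\alpha>\omega$. If $\alpha=\omega$, there are no uncountable regular cardinals below $\alpha$ to worry about. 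More to the point, $\omega$ has countable cofinality, so no subset of $\omega$ is $1$-s-stationary (a cofinal $\omega$-sequence is a club avoiding any given bounded-below set). Hence we may assume $\alpha$ is an uncountable regular cardinal.

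Since $S\subseteq\Reg(\lambda)$, we have $S\cap\alpha\subseteq\Reg(\alpha)$. Choose a club $C\subseteq\alpha$ with $C\cap\Reg(\alpha)=\emptyset$, which exists by (C4). Then $(S\cap\alpha)\cap C=\emptyset$, so $S\cap\alpha$ is non-stationary in $\alpha$. By Theorem \ref{thm:bagaria}(1), $S\cap\alpha$ is not $1$-s-stationary.

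It remains to pass from $1$ to arbitrary $j\geq1$. We use the monotonicity fact that $j$-s-stationarity implies $1$-s-stationarity. One route is Theorem \ref{thm:bagaria}(6): for $j\geq1$ we have $\NS_\alpha\subseteq\calI^j_\alpha$. Since $S\cap\alpha\in\NS_\alpha$, it follows that $S\cap\alpha\in\calI^j_\alpha$, i.e.\ $S\cap\alpha$ is not $j$-s-stationary in $\alpha$. Alternatively, Lemma \ref{lem:NSinvariance}(i) gives the same conclusion: $S\cap\alpha$ differs from $\emptyset$ by a nonstationary set, and $\emptyset$ is not $j$-s-stationary. Either way, Theorem \ref{thm:bagaria}(2) then yields $\alpha\notin d_j(S)$.

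The only delicate point is the edge case where $\calI^j_\alpha$ is improper, or $\alpha$ has small cofinality. If $\calI^j_\alpha$ is improper, then no subset of $\alpha$ is $j$-s-stationary, and the conclusion holds trivially. The countable-cofinality case was handled above. Alternatively, one can argue as in the proof of Lemma \ref{lem:regularsupport}, using the club that misses $S\cap\alpha$ directly.
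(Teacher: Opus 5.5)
Your proof is correct and is essentially the paper's argument: by (C4) the set $S\cap\alpha\subseteq\Reg(\alpha)$ is non-stationary in $\alpha$, hence not $j$-s-stationary (you make this step explicit via Theorem~\ref{thm:bagaria}(6), where the paper leaves it implicit), so $\alpha\notin d_j(S)$ by Theorem~\ref{thm:bagaria}(2). The edge-case discussion is unnecessary, and your parenthetical for $\alpha=\omega$ does not justify ``no subset of $\omega$ is $1$-s-stationary'' as written, since no club of $\omega$ avoids $\omega$ itself; the correct reason is that a cofinal subset of order type $\omega$ has no limit point below $\alpha$, so using it as both $S$ and $T$ in Definition~\ref{def:sstat} rules out $1$-s-stationarity for every subset.
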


\begin{proof}
Suppose $\alpha\in E_0^\lambda$. Then by  clause (C4) above,
$S\cap\alpha\subseteq\Reg(\alpha)$ non-stationary.  It is therefore not
$j$-s-stationary, and hence by  Theorem \ref{thm:bagaria}(2) $\alpha \notin d_j(S)$.
\end{proof}

\begin{lemma}
\label{lem:localsupport}
Suppose $\mathcal E^\lambda$ is coherent,  $A\subseteq\lambda$, and
$0<\eta<\rho$.  There is a club $C\subseteq\lambda$ such that for every
$\alpha\in E_\eta^\lambda\cap C$,
$
  \sigma_{\mathcal E^\alpha}(A\cap\alpha)=\sigma_{\mathcal E^\lambda}(A)\cap\eta,
$
where
$\mathcal E^\alpha=\langle E_\xi^\lambda\cap\alpha:\xi<\eta\rangle$.
\end{lemma}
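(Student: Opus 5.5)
The plan is to treat each $\xi<\eta$ separately and then intersect clubs. Since $\rho<\lambda$ and $\lambda$ is regular uncountable, the intersection of fewer than $\lambda$ clubs is club. So it suffices to find, for each $\xi<\eta$, a club $C_\xi$ such that for all $\alpha\in E_\eta^\lambda\cap C_\xi$ we have
\[
  A\cap E_\xi^\lambda\cap\alpha \text{ stationary in } \alpha
  \iff
  A\cap E_\xi^\lambda \text{ stationary in } \lambda .
\]
We then take $C=\bigcap_{\xi<\eta}C_\xi$. Note that for $\alpha\in E_\eta^\lambda$, clause (C3) makes $\mathcal E^\alpha$ a coherent system of length $\eta$ at $\alpha$ with $E^\alpha_\xi=E^\lambda_\xi\cap\alpha$. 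Hence $\sigma_{\mathcal E^\alpha}(A\cap\alpha)$ is exactly the set of $\xi<\eta$ for which the left-hand side holds.

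\emph{Case 1: $A\cap E_\xi^\lambda$ is nonstationary in $\lambda$.} Choose a club $C_\xi$ disjoint from $A\cap E_\xi^\lambda$. For every limit point $\alpha$ of $C_\xi$ below $\lambda$, the set $C_\xi\cap\alpha$ is club in $\alpha$ (when $\operatorname{cf}\alpha>\omega$) and misses $A\cap E_\xi^\lambda\cap\alpha$, so the latter is nonstationary in $\alpha$. Replace $C_\xi$ by its set of limit points. Elements of $E_\eta^\lambda\subseteq\Reg(\lambda)$ are regular uncountable, since they lie in the stationary part. Any countable-cofinality elements are irrelevant; they can be removed by noting that $E_\eta^\lambda$ consists of regular cardinals, which we may assume are uncountable after intersecting with a tail club.

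\emph{Case 2: $S:=A\cap E_\xi^\lambda$ is stationary in $\lambda$.} This is the direction that uses the reflection structure, via (C2) and Theorem \ref{thm:witzanylocal}(1). Since $S\subseteq E_\xi^\lambda$ is stationary, we get $\mu(S)=\xi$. Thus
\[
  \Tr_\lambda(S)\cap\Reg(\lambda)\equiv_{\NS_\lambda}\bigcup_{\xi<\zeta<\rho}E^\lambda_\zeta .
\]
So there is a club $C_\xi$ with $E_\eta^\lambda\cap C_\xi\subseteq\Tr_\lambda(S)$, because $\xi<\eta$. That is, $S\cap\alpha$ is stationary in $\alpha$ for all $\alpha\in E_\eta^\lambda\cap C_\xi$, which is the required equivalence.

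The main obstacle is Case 2. It relies on clause (C2) transferring Theorem \ref{thm:witzanylocal}(1) to the system $\mathcal E^\lambda$ with the indexing $\mu(S)=\min\sigma_\lambda(S)$. I would check carefully that "$S<T$ iff $\xi<\eta$" alone already suffices. Indeed, taking $T=E^\lambda_\eta\setminus\Tr_\lambda(S)$: if $T$ were stationary, then $S<T$ would fail while $\xi<\eta$ holds, contradicting (C2). Hence $T$ is nonstationary, which gives the club directly. This alternative makes Case 2 a short argument independent of the precise form of (1).
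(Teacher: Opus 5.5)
Your proof is correct and follows the paper's argument: for each $\xi<\eta$ you take either a club on which $E^\lambda_\eta$ lies inside $\Tr_\lambda(A\cap E^\lambda_\xi)$ (via (C2)), or, when $A\cap E^\lambda_\xi$ is nonstationary, the limit points of a club missing it, and then you intersect these $\eta<\lambda$ many clubs. Your alternative derivation of the stationary case directly from the ordering clause of (C2), by testing against $T=E^\lambda_\eta\setminus\Tr_\lambda(S)$, is also valid, and it avoids using the exact trace formula.
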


\begin{proof}
Fix $\xi<\eta$.  If $A\cap E_\xi^\lambda$ is stationary, then by (C2) there is  a club
$D_\xi$ such that every $\alpha\in E_\eta^\lambda\cap C_\xi$ belongs to
$\Tr_\lambda(A\cap E_\xi^\lambda)$.  If it is non-stationary, choose a club
$C_\xi$ disjoint from it,  and let
$D_\xi$ be the set of limit points of $C_\xi$.  Then for $\alpha\in E_\eta^\lambda\cap D_\xi$, the set
$D_\xi\cap\alpha$ is club in $\alpha$, so
$A\cap E_\xi^\lambda\cap\alpha$ is non-stationary. Let $C=\bigcap_{\xi < \eta} D_\xi$. As $\lambda$ is regular and $\eta < \lambda$, $C$ is a club and by (C3) the equality follows immediately.
\end{proof}

We now show that Witzany's model gives us enough coherent Reflection systems.
Let
\[
 \vecU=\langle U(\lambda,\eta):\lambda\leq\kappa,
 \ \eta<o^{\vecU}(\lambda)\rangle
\]
be a coherent sequence of normal measures
Thus, if
$j_{\lambda,\eta}:V\to M_{\lambda,\eta}\simeq \text{Ult}(V, U(\lambda,\eta))$ is the ultrapower embedding by
$U(\lambda,\eta)$, then
$
  o^{j_{\lambda,\eta}(\vecU)}(\lambda)=\eta,
$
and the measures on $\lambda$ seen in that ultrapower are exactly the measures
$U(\lambda,\xi)$, $\xi<\eta$.
Fix a cardinal $\Theta<\kappa$,  for $\Theta<\lambda\leq\kappa,$ set
\[
 \mathcal S^\Theta_\lambda=
 \{U(\lambda,\eta):\eta<\min(o^{\vecU}(\lambda),\lambda)\},
 \qquad \Theta<\lambda\leq\kappa,
\]
and put $\mathcal S^\Theta_\lambda=\emptyset$ for $\lambda\leq\Theta$.
For every $\eta<\kappa$ define the set $X_\eta$ as
\[
 X_\eta=\{\alpha<\kappa:\Theta<\alpha,\ \eta<\alpha,
 \ \alpha\text{ is a regular cardinal, and }
 o^{\vecU}(\alpha)=\eta\}.
\]
The $X_\eta$ are  pairwise disjoint.

\begin{lemma}
\label{lem:separation}
Suppose $\Theta<\lambda\leq\kappa$ and
$
  \xi,\eta<\min(o^{\vecU}(\lambda),\lambda).
$
Then
\[
  X_\xi\cap\lambda\in U(\lambda,\eta)
  \quad\Longleftrightarrow\quad \xi=\eta.
\]
\end{lemma}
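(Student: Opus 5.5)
Fix $\Theta<\lambda\leq\kappa$ and $\xi,\eta<\min(o^{\vecU}(\lambda),\lambda)$, and work in the ultrapower $j=j_{\lambda,\eta}:V\to M=M_{\lambda,\eta}$. By normality, $X_\xi\cap\lambda\in U(\lambda,\eta)$ iff $\lambda\in j(X_\xi)$. Since $\xi<\lambda=\operatorname{crit}(j)$, we have $j(\xi)=\xi$; also $j(\Theta)=\Theta$ because $\Theta<\lambda$. Elementarity applied to the definition of $X_\xi$ then gives
\[
 j(X_\xi)=\{\alpha<j(\kappa):\Theta<\alpha,\ \xi<\alpha,\ \alpha\text{ regular in }M,\ o^{j(\vecU)}(\alpha)=\xi\}.
\]
(When $\lambda=\kappa$, read $X_\xi\cap\kappa=X_\xi$; when $\lambda<\kappa$, apply the same computation to the set $X_\xi\cap\lambda$, whose defining clauses are identical with $\kappa$ replaced by $\lambda$.)

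Now test whether $\lambda$ belongs to this set in $M$, clause by clause. First, $\Theta<\lambda$ holds by hypothesis. Second, $\xi<\lambda$ holds since $\xi<\min(o^{\vecU}(\lambda),\lambda)$. Third, $\lambda<j(\kappa)$ (or $\lambda<j(\lambda)$), since $\lambda$ is the critical point. Fourth, $\lambda$ is regular in $M$, because it is measurable in $V$ and $M\subseteq V$ is closed enough that regularity of $\lambda$ is absolute downward. The only remaining clause is $o^{j(\vecU)}(\lambda)=\xi$. By the coherence property recalled above, $o^{j_{\lambda,\eta}(\vecU)}(\lambda)=\eta$. Hence $\lambda\in j(X_\xi)$ iff $\eta=\xi$, which is exactly the claim.

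The only delicate point is bookkeeping rather than mathematics. One must check that $j$ fixes the parameters $\Theta$ and $\xi$, which uses $\Theta,\xi<\lambda$; this is precisely why the statement restricts to indices below $\min(o^{\vecU}(\lambda),\lambda)$ and to $\lambda>\Theta$. One must also confirm that the order function in $M$ is computed from $j(\vecU)$, so that coherence applies directly.
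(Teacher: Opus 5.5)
Your proof is correct and takes the same route as the paper: you pass to the ultrapower $j$ by $U(\lambda,\eta)$, note that $j$ fixes $\Theta$ and $\xi$, and reduce $\lambda\in j(X_\xi)$ to $o^{j(\vecU)}(\lambda)=\xi$, which coherence turns into $\xi=\eta$. Unlike the paper, you also check the clauses it leaves implicit: that $\Theta,\xi<\lambda$, and that $\lambda$ remains regular in $M$ by downward absoluteness of regularity.
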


\begin{proof}
Let $j=j_{\lambda,\eta}$.  Since $\xi,\Theta<\lambda=\operatorname{crit}(j)$, we have $j(\xi)=\xi$
and $j(\Theta)=\Theta$.
Then,
\[
 X_\xi\cap\lambda\in U(\lambda,\eta)
 \Longleftrightarrow \lambda\in j(X_\xi) \Longleftrightarrow  o^{j(\vecU)}(\lambda)=\xi  \Longleftrightarrow \eta=\xi.
\]
The result follows.
\end{proof}

\begin{lemma}
\label{lem:closedness}
The system $\mathcal S^\Theta=
\langle\mathcal S^\Theta_\lambda:\lambda\leq\kappa\rangle$ is closed  and all of its local measure families are separable.
\end{lemma}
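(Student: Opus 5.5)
Separability is immediate from Lemma \ref{lem:separation}. For $\Theta<\lambda\le\kappa$ the measures in $\mathcal S^\Theta_\lambda$ are the $U(\lambda,\eta)$ with $\eta<\min(o^{\vecU}(\lambda),\lambda)$. I would take as separating family $X^\lambda_{U(\lambda,\eta)}=X_\eta\cap\lambda$. By Lemma \ref{lem:separation}, $X_\xi\cap\lambda\in U(\lambda,\eta)$ holds iff $\xi=\eta$, which is exactly clause (4) of Definition \ref{def:witzany-system}. For $\lambda\le\Theta$ the family $\mathcal S^\Theta_\lambda$ is empty, so there is nothing to check. Each member is a normal measure on $\lambda$, so $\mathcal S^\Theta$ is a system of normal measures.

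For closedness, fix $\alpha\le\kappa$ and $W=U(\alpha,\eta)\in\mathcal S^\Theta_\alpha$, so $\Theta<\alpha$ and $\eta<\min(o^{\vecU}(\alpha),\alpha)$. Let $j=j_W=j_{\alpha,\eta}$. The system $\mathcal S^\Theta$ is defined from $\vecU$ and the parameter $\Theta$ by a first-order recipe, and $j(\Theta)=\Theta$ because $\Theta<\alpha=\operatorname{crit}(j)$. Hence by elementarity
\[
(j\mathcal S^\Theta)(\alpha)=\{U':U'\text{ is the }\xi\text{-th measure on }\alpha\text{ in }j(\vecU),\ \xi<\min(o^{j(\vecU)}(\alpha),\alpha)\}.
\]
Coherence of $\vecU$ gives $o^{j(\vecU)}(\alpha)=\eta$, and the measures on $\alpha$ in $j(\vecU)$ are exactly $U(\alpha,\xi)$ for $\xi<\eta$. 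Since $\eta<\alpha$, we get $\min(\eta,\alpha)=\eta$. Therefore $(j\mathcal S^\Theta)(\alpha)=\{U(\alpha,\xi):\xi<\eta\}$, and this is contained in $\mathcal S^\Theta_\alpha$ because $\xi<\eta<\min(o^{\vecU}(\alpha),\alpha)$. As a by-product, $U(\alpha,\xi)\triangleleft_{\mathcal S^\Theta}U(\alpha,\eta)$ iff $\xi<\eta$.

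The main point needing care is the elementarity step. One must check that the $j(\vecU)$-sequence at index $\alpha$ really consists of the true measures $U(\alpha,\xi)$, which is the meaning of coherence as stated. One must also check that the cutoff $\min(\cdot,\alpha)$ and the condition $\Theta<\alpha$ are evaluated in $M_W$ with the same values; this holds because $\alpha$ and $\Theta$ are fixed by $j$ at the relevant coordinate. One should also note that closedness only concerns $\alpha$ itself, which is the only place where $j(\mathcal S^\Theta)$ is evaluated. No other obstacle is expected.
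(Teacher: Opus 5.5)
Your proposal is correct and takes essentially the same route as the paper. Separability comes from Lemma \ref{lem:separation} with separators $X_\eta\cap\lambda$. Closedness comes from computing $(j_W\mathcal S^\Theta)(\alpha)=\{U(\alpha,\xi):\xi<\eta\}\subseteq\mathcal S^\Theta_\alpha$, using $j(\Theta)=\Theta$, coherence ($o^{j(\vecU)}(\alpha)=\eta$), and $\eta<\alpha$; your remarks about evaluating the cutoff in $M_W$ simply make explicit what the paper leaves implicit.
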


\begin{proof}
Let $W=U(\lambda,\eta)\in\mathcal S^\Theta_\lambda$ and let $j$ be the
corresponding ultrapower embedding.  Since $j(\Theta)=\Theta$ and
$o^{j(\vecU)}(\lambda)=\eta<\lambda$, we have
\[
  (j(\mathcal S^\Theta))_\lambda
  =\{U(\lambda,\xi):\xi<\eta\}
  \subseteq\mathcal S^\Theta_\lambda.
\]
Thus $\mathcal S^\Theta$ is closed.   Separability follows from
Lemma \ref{lem:separation}.
\end{proof}
By applying Theorems \ref{thm:witzany-general} and \ref{thm:witzanylocal} to the system $\mathcal S^\Theta=
\langle\mathcal S^\Theta_\lambda:\lambda\leq\kappa\rangle$, we obtain the following.
\begin{proposition}
\label{prop:cutoff}
Assume GCH holds and  $\vecU$ is a coherent sequence of measures  as above.  There is a cofinality and GCH preserving Easton-support forcing
iteration
\[
\mathbb{P}:=\langle \langle \mathbb{P}_\lambda: \lambda \leq \kappa     \rangle, \langle \dot{\mathbb{Q}}_\lambda: \lambda < \kappa       \rangle\rangle
\]
which has the following properties:
\begin{enumerate}[(1)]
\item $\mathbb{P}_\Theta$ is the trivial forcing,
\item at every stage $\lambda>\Theta$, $\Vdash_{\mathbb{P}_\lambda}$`` $\dot{\mathbb{Q}}_\lambda$  is Witzany's forcing for
      $\mathcal S^\Theta_\lambda$ with separators $X_\eta\cap\lambda$'',
\item For every $\lambda < \kappa$, the forcing $\mathbb{P}_\kappa / \dot{G}_{\mathbb{P}_\lambda}$ adds no new bounded sequences to $\lambda$,
\item if $\lambda\in X_\rho$ and $0<\rho<\lambda$, then in the final extension
      \[
        \langle X_\eta\cap\lambda:\eta<\rho\rangle
      \]
      is a coherent  Reflection system of length $\rho$ at
      $\lambda$, and if $\lambda\in X_0$, then $\Reg(\lambda)$ is non-stationary.
\end{enumerate}
\end{proposition}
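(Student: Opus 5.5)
We apply Theorem \ref{thm:witzany-general} to the system $\mathcal S^\Theta$, which is legitimate by Lemma \ref{lem:closedness}. Its proof is an Easton-support iteration of length $\kappa+1$ that acts nontrivially only at $\lambda$ with $\mathcal S^\Theta_\lambda\neq\emptyset$, i.e.\ at $\Theta<\lambda$ with $o^{\vecU}(\lambda)>0$. Hence we may take $\mathbb P_\Theta$ trivial, which gives (1). At each stage $\lambda>\Theta$ we use Witzany's forcing for $\mathcal S^\Theta_\lambda$ with the separating family $\langle X_\eta\cap\lambda:\eta<\min(o^{\vecU}(\lambda),\lambda)\rangle$, which is separating by Lemma \ref{lem:separation}; this gives (2). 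Preservation of cofinalities and GCH is part of Theorem \ref{thm:witzany-general}. Item (3) is the statement, recorded after Theorem \ref{thm:witzany-general}, that no forcing after stage $\lambda$ adds subsets of $\lambda$. Bounded subsets of $\lambda$ are coded by subsets of some $\mu<\lambda$, so for limit $\lambda$ we apply this at stages $\le\mu$; for successor stages we use the closure of the tail.

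The substance is (4). Fix $\lambda\in X_\rho$ with $0<\rho<\lambda$. Then $o^{\vecU}(\lambda)=\rho<\lambda$, so $\mathcal S^\Theta_\lambda=\langle U(\lambda,\eta):\eta<\rho\rangle$. By coherence, $U(\lambda,\xi)\triangleleft U(\lambda,\eta)$ iff $\xi<\eta$, so this sequence is $\triangleleft_{\mathcal S}$-increasing. Theorem \ref{thm:witzanylocal} then applies with $E_\eta=X_\eta\cap\lambda$. It gives (C1), except that we must also check each $E_\eta$ is stationary; this follows because $E_\eta\in U(\lambda,\eta)$, so stationarity holds in $V$ and is preserved since $\lambda$ stays regular and the $E_\eta$ partition $\Reg(\lambda)$ modulo $\NS$ in the extension. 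It also gives items (1) and (2) of (C2). For the ordering in (C2), take stationary $S\subseteq E_\xi$ and $T\subseteq E_\eta$. Then $\mu(S)=\xi$, so $\Tr_\lambda(S)\cap\Reg(\lambda)\equiv_{\NS}\bigcup_{\xi<\zeta<\rho}E_\zeta$. Thus $T\setminus\Tr_\lambda(S)$ is nonstationary iff $\eta>\xi$: if $\eta\le\xi$, then $T$ is disjoint from that union modulo $\NS$, and $T$ is stationary.

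(C4) is the case $\lambda\in X_0$, so $o^{\vecU}(\lambda)=0$ and $\mathcal S^\Theta_\lambda=\emptyset$. Two cases arise, and I expect this to be the delicate point. If $\lambda\le\Theta$, nothing is done at $\lambda$; but $X_0$ only contains $\alpha>\Theta$, so this case does not occur. If $\lambda>\Theta$, Witzany's forcing with the empty family must make $\Reg(\lambda)$ nonstationary; it adds a club of singulars, consistent with the maximal-antichain clause for an empty antichain. In either case the statement survives to the final model by (3).

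(C3) goes by induction on $\rho$. Take $0<\eta<\rho$ and $\alpha\in E_\eta^\lambda=X_\eta\cap\lambda$. Then $\alpha\in X_\eta$ with $\eta<\alpha$, since $X_\eta$ requires $\eta<\alpha$, and $\alpha>\Theta$. The case already proved, applied at $\alpha$, shows that $\langle X_\xi\cap\alpha:\xi<\eta\rangle=\langle E^\lambda_\xi\cap\alpha:\xi<\eta\rangle$ satisfies (C1), (C2) and (C4) at $\alpha$. Its own (C3) clause refers to length $\xi<\eta<\rho$ at points $\beta\in X_\xi\cap\alpha$, which is the same statement at a smaller length. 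So we carry out a simultaneous induction on $\rho$ over all $\lambda\in X_\rho$, proving the full coherence; the base case $\rho=1$ makes (C3) vacuous. The main obstacle I expect is confirming that the stage-$\lambda$ conclusions of Witzany's proof really hold with exactly these separators and the empty system on $X_0$, and that they persist to the final model. Both points should follow from the remarks after Theorem \ref{thm:witzany-general} together with (3).
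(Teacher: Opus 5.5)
Your overall route is the paper's route. The paper's only argument is ``apply Theorems~\ref{thm:witzany-general} and~\ref{thm:witzanylocal} to $\mathcal S^\Theta$ with separators $X_\eta\cap\lambda$''. Several of your steps correctly fill in what the paper leaves implicit: the check that $\langle U(\lambda,\eta):\eta<\rho\rangle$ is $\triangleleft_{\mathcal S}$-increasing, the derivation of the ordering clause of (C2) from item (1) of Theorem~\ref{thm:witzanylocal}, and the induction on $\rho$ for (C3).

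The real problem is your justification of (1). You claim Witzany's iteration acts nontrivially only at $\lambda$ with $\mathcal S^\Theta_\lambda\neq\emptyset$. Later, for (C4), you say that at $\lambda\in X_0$, where $\mathcal S^\Theta_\lambda=\emptyset$, it adds a club of singular cardinals. These two claims are incompatible, and the first one is false. If nothing were done at a Mahlo $\lambda\in X_0$, then $\mathbb P_\lambda$ (which is $\lambda$-c.c.) and the tail (which adds no subsets of $\lambda$) would leave $\Reg(\lambda)$ stationary, and (C4) would fail. Such $\lambda$ exist, since $X_0\cap\lambda'\in U(\lambda',0)$ contains Mahlo cardinals for measurable $\lambda'$.

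Once you accept that Witzany's forcing with the empty family kills the stationarity of $\Reg(\lambda)$, the same applies below $\Theta$. The iteration supplied by Theorem~\ref{thm:witzany-general} for $\mathcal S^\Theta$ does this at every Mahlo $\lambda\le\Theta$, because $\mathcal S^\Theta_\lambda=\emptyset$ there too. So $\mathbb P_\Theta$ is not trivial, and (1) does not follow from a black-box application.

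The missing step is a truncation argument:
\begin{enumerate}
\item Define $\dot{\mathbb Q}_\lambda$ to be trivial for $\lambda\le\Theta$, and to be Witzany's forcing for every $\lambda>\Theta$, including those with $\mathcal S^\Theta_\lambda=\emptyset$.
\item Check that Witzany's stage-$\lambda$ conclusions survive. The ultrapower embeddings by measures in $\mathcal S^\Theta_\lambda$ have critical point $\lambda>\Theta$, so they fix $\Theta$ and send the truncated iteration to the truncated iteration of the ultrapower.
\item Observe that all the trace and stationarity computations at $\lambda$ concern only the tail $(\Theta,\lambda)$. This tail is club in $\lambda$ and belongs to every $U(\lambda,\eta)$, so the initial segment $[0,\Theta]$ is irrelevant there.
\end{enumerate}
The paper itself builds this truncation into the statement without comment, but the reason you give for it is wrong and contradicts your own treatment of (C4).

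A smaller point concerns stationarity of the $E_\eta$ in the extension. Your reason, ``since $\lambda$ stays regular'', does not preserve stationarity. Stationarity of the $E_\eta$ is part of the maximal-antichain clause of Theorem~\ref{thm:witzanylocal}. In Witzany's proof it comes from lifting $j_{\lambda,\eta}$, so that $U(\lambda,\eta)$ extends to a normal measure in the extension containing $X_\eta\cap\lambda$.
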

Let $\mathbb{P}$ be the forcing notion of Proposition \ref{prop:cutoff}, and let $G$ be $\mathbb{P}$-generic over $V$.
\begin{corollary}
\label{cor:rootsupply}
Assume $o^{\vecU}(\kappa)\geq\kappa$.  Then in $V[G]$, for every $\eta<\kappa$, the set
 $X_\eta$ is stationary in $\kappa$.
Consequently, for every $\eta,\gamma<\kappa$ there is
$\lambda\in X_\eta$ above $\max\{\Theta,\eta,\gamma\}$.
\end{corollary}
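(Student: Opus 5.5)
We prove the stationarity of each $X_\eta$ in the ground model $V$ and then show that this stationarity survives the forcing $\mathbb P$. The ``consequently'' clause is immediate: a stationary subset of the regular cardinal $\kappa$ is unbounded, so it has elements above $\max\{\Theta,\eta,\gamma\}<\kappa$.

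\emph{Stationarity in $V$.} Fix $\eta<\kappa$. Since $o^{\vecU}(\kappa)\geq\kappa>\eta$, the measure $U(\kappa,\eta)$ exists. Let $j=j_{\kappa,\eta}$. Because $\Theta,\eta<\kappa=\operatorname{crit}(j)$, we have $j(\Theta)=\Theta$ and $j(\eta)=\eta$. In $M_{\kappa,\eta}$, $\kappa$ is a regular cardinal above $\Theta$ and $\eta$, and by coherence $o^{j(\vecU)}(\kappa)=\eta$. Hence $\kappa\in j(X_\eta)$, exactly as in the proof of Lemma~\ref{lem:separation}, and so $X_\eta\in U(\kappa,\eta)$. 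A normal measure on $\kappa$ contains every club subset of $\kappa$, so $X_\eta$ meets every club and is stationary in $V$.

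\emph{Preservation to $V[G]$.} This is the main step. The argument depends on how the final stage at $\kappa$ is treated. $\mathbb P$ is presented as an iteration up to $\mathbb P_\kappa$, with iterands only at $\lambda<\kappa$. It is an Easton-support iteration of length $\kappa$ in which each stage $\lambda$ is sufficiently closed and small relative to $\kappa$. Moreover, by Proposition~\ref{prop:cutoff}(3), tails add no bounded subsets. With these facts I would argue in one of two ways.

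\begin{enumerate}[(a)]
\item The direct route. $\mathbb P_\kappa$ is $\kappa$-c.c. by the standard $\Delta$-system argument: Easton support, $\kappa$ Mahlo (indeed measurable), and $|\mathbb P_\lambda|<\kappa$ for $\lambda<\kappa$ under GCH. A $\kappa$-c.c. forcing preserves stationary subsets of $\kappa$, because every club of $V[G]$ contains a ground-model club. So $X_\eta$ remains stationary.
\item The more robust route, which also covers the case where Witzany's stage at $\kappa$ itself is included. Lift $j$ to $j:V[G]\to M[G^*]$ and verify that $X_\eta\in$ the lifted measure. The lifted measure is then a normal measure in $V[G]$ containing $X_\eta$, so $X_\eta$ is stationary there. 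Since $X_\eta$ is defined in $V$ and $j(X_\eta)\ni\kappa$ is computed in $M$, the membership carries over automatically once the lift exists.
\end{enumerate}

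The obstacle is the lifting. The standard construction builds the generic for $j(\mathbb P)_\kappa=\mathbb P_\kappa$ from $G$ itself. It then needs a master condition, or a generic for the tail $j(\mathbb P)/G$ built over $M[G]$ using the closure of $M$ together with GCH. Witzany's own paper carries out exactly this to preserve the measures that witness his reflection ordering. So I would cite that, or use route (a) if $\mathbb P$ stops below the stage at $\kappa$, which is how Proposition~\ref{prop:cutoff} is written.
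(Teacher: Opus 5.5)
Your argument is correct, but it takes a different route from the paper. The paper gets stationarity from the forcing rather than from $V$. It notes that $\langle X_\eta\cap\kappa:\eta<\kappa\rangle$ is the separating family for $\mathcal S^\Theta_\kappa=\{U(\kappa,\eta):\eta<\kappa\}$. It then cites the output of Witzany's forcing via Proposition~\ref{prop:cutoff}(4): after the stage handling $\mathcal S^\Theta_\kappa$, the separators form a maximal antichain of stationary subsets of $\Reg(\kappa)$ modulo $\NS_\kappa$.

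You instead get stationarity in $V$ from $X_\eta\in U(\kappa,\eta)$. This is the computation of Lemma~\ref{lem:separation} with $\lambda=\kappa$, which is legitimate since $\eta<\kappa\le o^{\vecU}(\kappa)$. You then push it to $V[G]$ using the $\kappa$-c.c.\ of the Easton-support iteration $\mathbb P_\kappa$. You should also say explicitly that $X_\eta$ is literally the same set in $V$ and $V[G]$, because $\vecU$ is a ground-model object and cofinalities are preserved.

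What your route buys is that it is elementary and fits the iteration exactly as Proposition~\ref{prop:cutoff} writes it, with iterands only at $\lambda<\kappa$. Clause (4), by contrast, is stated only for $\lambda\in X_\rho$ with $0<\rho<\lambda$, which $\kappa$ is not. So the paper's one-line proof implicitly uses a Witzany stage at $\kappa$ itself. That stage is covered by Theorem~\ref{thm:witzany-general} rather than Theorem~\ref{thm:witzanylocal}, since $|\mathcal S^\Theta_\kappa|=\kappa$.

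What the paper's route buys, when such a stage is present, is much more than stationarity: the full maximal-antichain and reflection-ordering structure at $\kappa$. In that situation your chain-condition argument no longer covers the last stage. Even then your route (b) is superfluous: no lifting of $j_{\kappa,\eta}$ is needed, since Witzany's theorem already says the separators stay stationary. As written, (b) is only a sketch with the lifting deferred. So present (a) as the proof, and for the variant with a stage at $\kappa$, cite Witzany's conclusion directly.
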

\begin{proof}
By our assumption, $ \mathcal S^\Theta_\kappa=
 \{U(\lambda,\eta):\eta<\kappa\}$, and the sequence $\langle X_\eta\cap\kappa: \eta<\kappa  \rangle$
separates it. By Proposition \ref{prop:cutoff}(4), each set $X_\eta\cap\kappa$ is stationary.
The result follows.
\end{proof}

\section{Completeness of $\GL$ for Mahlo topology}
\label{sec:mahlo-case}

In this section, we consider the case of the Mahlo topology and show the completeness of $\GL$  with respect to it.
Let \(\mathcal E^\lambda=\langle E_\eta^\lambda:\eta<\rho\rangle\) be a coherent  Reflection system of length \(0<\rho<\lambda\).
Suppose \(S\subseteq\lambda\) is stationary. If \(S\cap\Sing(\lambda)\) is stationary, put \(\mu(S)=-1\), otherwise set
\[
  \mu(S)=\min\{\xi<\rho:S\cap E_\xi^\lambda\text{ is stationary}\}.
\]

\begin{lemma}\label{lem:mahlo-trace-tail}
For every stationary set \(S\subseteq\lambda\),
$
  \Tr_\lambda(S)\cap\Reg(\lambda)
  \equivNS
  \bigcup_{\mu(S)<\eta<\rho}E_\eta^\lambda.
$
In particular, if \(S,T\subseteq\lambda\) are stationary and
\(\delta=\max\{\mu(S),\mu(T)\}\), then
\[
 \Tr_\lambda(S)\cap\Tr_\lambda(T)\cap\Reg(\lambda)
 \equivNS
 \bigcup_{\delta<\eta<\rho}E_\eta^\lambda.
\]
\end{lemma}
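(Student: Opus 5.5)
The plan is to split according to whether $\mu(S)=-1$ or $\mu(S)\geq 0$, and in each case read off the trace from clause (C2) of Definition~\ref{def:coherent}, which says that items (1) and (2) of Theorem~\ref{thm:witzanylocal} hold for $\mathcal E^\lambda$.

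\emph{Case $\mu(S)\geq 0$.} Here $S\cap\Sing(\lambda)$ is nonstationary. By (C1), $\Reg(\lambda)\setminus\bigcup_{\eta<\rho}E^\lambda_\eta$ is nonstationary. Hence $S\cap\Reg(\lambda)$ is stationary and meets some $E_\xi^\lambda$ in a stationary set, so $\mu(S)$ is well defined and agrees with the $\mu(S)$ of Theorem~\ref{thm:witzanylocal}(1). Applying that item through (C2) gives the claimed equivalence modulo $\NS_\lambda$ directly. If one prefers not to apply (1) to arbitrary $S$, one can argue from the ordering part of (C2) instead:
\begin{itemize}
\item For $\eta>\mu(S)$, the relation $S\cap E^\lambda_{\mu(S)}<E_\eta^\lambda$ gives $E^\lambda_\eta\subseteq \Tr_\lambda(S)$ modulo $\NS$.
\item For $\eta\leq\mu(S)$, every stationary $T\subseteq E_\eta^\lambda\cap\Tr_\lambda(S)$ would reflect some stationary piece $S\cap E_\xi^\lambda$ with $\xi\geq\mu(S)\geq\eta$. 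Here we use that $\Tr$ of a finite or small union is the union of the traces, and that the singular part contributes nothing by Lemma~\ref{lem:regularsupport} applied to $\alpha\in\Reg$; this contradicts $S<T$ iff $\xi<\eta$.
\end{itemize}
The care needed is that $\rho<\lambda$, so the union over $\xi<\rho$ of clubs witnessing the non-reflection is still handled by a diagonal intersection, or simply by a $\rho$-intersection of clubs since $\lambda$ is regular.

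\emph{Case $\mu(S)=-1$.} Here $S\cap\Sing(\lambda)$ is stationary, so item (2) of Theorem~\ref{thm:witzanylocal} gives $\Reg(\lambda)\setminus\Tr_\lambda(S)\in\NS_\lambda$. Thus $\Tr_\lambda(S)\cap\Reg(\lambda)\equivNS\Reg(\lambda)$, which by (C1) is $\equivNS\bigcup_{\eta<\rho}E^\lambda_\eta=\bigcup_{-1<\eta<\rho}E_\eta^\lambda$.

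For the second statement, intersect the two equivalences; equivalence modulo $\NS_\lambda$ is preserved under intersection. This gives $\bigcup_{\mu(S)<\eta}E_\eta^\lambda\cap\bigcup_{\mu(T)<\eta}E^\lambda_\eta$, and by pairwise disjointness of the $E_\eta^\lambda$ this equals $\bigcup_{\delta<\eta<\rho}E^\lambda_\eta$.

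The main obstacle is only bookkeeping: checking that the $\mu$ defined here (with the value $-1$) matches the $\mu$ in Theorem~\ref{thm:witzanylocal}(1) when $S\cap\Sing(\lambda)$ is nonstationary. That $\mu$ is computed from stationarity of $S\cap E_\xi$, which ignores the nonstationary singular part, so the two coincide.
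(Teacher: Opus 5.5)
Your proposal is correct and follows the paper's proof. You split on whether $S\cap\Sing(\lambda)$ is stationary, read off the trace from items (2) and (1) of Theorem~\ref{thm:witzanylocal} via (C2), and then intersect using pairwise disjointness of the $E_\eta^\lambda$. The paper makes one step explicit that you only gesture at: when $S\cap\Sing(\lambda)$ is nonstationary, $\Tr_\lambda(S)\equivNS\Tr_\lambda(S\cap\Reg(\lambda))$. This holds because a club can be chosen to avoid $S\cap\Sing(\lambda)$; it does not follow from Lemma~\ref{lem:regularsupport}, which is what your alternative route cites.
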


\begin{proof}
If \(S\cap\Sing(\lambda)\) is stationary, then by clause (C2) of Definition \ref{def:coherent}, the set \(S\) reflects at almost every regular cardinal, i.e., $
  \Tr_\lambda(S)\cap\Reg(\lambda)
  \equivNS
  \bigcup_{\eta<\rho}E_\eta^\lambda.
$  This is exactly the first formula of lemma with \(\mu(S)=-1\).

Now suppose that  \(S\cap\Sing(\lambda)\) is not stationary. Then clearly $\Tr_\lambda(S) \equivNS \Tr_\lambda(S\cap\Reg(\lambda))$, so we may replace $S$ by $S\cap\Reg(\lambda).$
  Let \(\delta=\mu(S)\). Again by  clause (C2),
$
  \Tr_\lambda(S)\cap\Reg(\lambda)
  \equivNS
  \bigcup_{\delta<\eta<\rho}E_\eta^\lambda.
$

The second part of the lemma follows immediately by intersecting the resulting equalities. More precisely,   we have
\[
\Tr_\lambda(S)\cap\Tr_\lambda(T)\cap\Reg(\lambda)
 \equivNS
\big( \bigcup_{\mu(S)<\eta<\rho}E_\eta^\lambda \big) \cap \big(  \bigcup_{\mu(T)<\eta<\rho}E_\eta^\lambda        \big).
\]
Now by clause (C1) of Definition \ref{def:coherent}, the set $\Reg(\lambda) \setminus \bigcup_{\eta<\delta}E_\eta^\lambda$
is non-stationary,  hence $\Tr_\lambda(S)\cap\Tr_\lambda(T)\cap\Reg(\lambda)
 \equivNS
 \bigcup_{\delta<\eta<\rho}E_\eta^\lambda$, as requested.
\end{proof}

\begin{lemma}
\label{lem:mahlo-rank-shape}
\begin{enumerate}
\item If \(\rho\) is a nonzero limit ordinal, then \(\lambda\) is \(2\)-s-reflecting and \(\calF^2_\lambda\) is proper.

\item If \(\rho=\delta+1\), then \(\lambda\) is not \(2\)-s-reflecting and hence
$ \calF^2_\lambda=\Pow(\lambda)$ and
$  (\calF^2_\lambda)^+=\emptyset.$
\end{enumerate}
\end{lemma}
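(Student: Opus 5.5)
We will use the characterization of Theorem~\ref{thm:bagaria}(3) with $n=1$. By it, $\lambda$ is $2$-s-reflecting iff, for every pair of $0$-s-stationary sets $S,T\subseteq\lambda$ we have $\lambda\cap d_0(S)\cap d_0(T)\neq\emptyset$, and for every pair of stationary sets $S,T\subseteq\lambda$ we have $\lambda\cap d_1(S)\cap d_1(T)\neq\emptyset$. Here $d_1(S)\cap\lambda=\Tr_\lambda(S)$ by Theorem~\ref{thm:bagaria}(1),(2). The clause for $k=0$ is trivial for a regular uncountable $\lambda$: the limit points of two unbounded sets form a club, and clubs intersect. So everything reduces to simultaneous reflection of pairs of stationary sets, which Lemma~\ref{lem:mahlo-trace-tail} computes modulo $\NS_\lambda$ on $\Reg(\lambda)$. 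Once $\lambda$ is $2$-s-reflecting, properness of $\calF^2_\lambda$ follows from Theorem~\ref{thm:bagaria}(5). Conversely, if $\lambda$ is not $2$-s-reflecting, then $\lambda\in\calI^2_\lambda$, so $\emptyset\in\calF^2_\lambda$; hence $\calF^2_\lambda=\Pow(\lambda)$ and $(\calF^2_\lambda)^+=\emptyset$ by definition of positive sets.

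For (1), let $\rho$ be a nonzero limit and let $S,T$ be stationary. Put $\delta=\max\{\mu(S),\mu(T)\}$, which is $<\rho$ (possibly $-1$). Since $\rho$ is a limit, $\delta+1<\rho$. By (C1) the set $E^\lambda_{\delta+1}$ is stationary, so $\bigcup_{\delta<\eta<\rho}E^\lambda_\eta$ is stationary. By Lemma~\ref{lem:mahlo-trace-tail}, $\Tr_\lambda(S)\cap\Tr_\lambda(T)\cap\Reg(\lambda)$ differs from this set by a nonstationary set, so it is stationary and in particular nonempty. Thus some $\beta<\lambda$ lies in $d_1(S)\cap d_1(T)$, as required.

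For (2), let $\rho=\delta+1$. We must produce two stationary sets that do not reflect simultaneously anywhere below $\lambda$, not merely on a stationary set. This is the main obstacle, since Lemma~\ref{lem:mahlo-trace-tail} only gives information modulo $\NS_\lambda$. Take $S=T=E^\lambda_\delta$, which is stationary by (C1), with $\mu(S)=\delta$. By Lemma~\ref{lem:mahlo-trace-tail}, $\Tr_\lambda(S)\cap\Reg(\lambda)\equivNS\emptyset$, and by Lemma~\ref{lem:regularsupport}, $\Tr_\lambda(S)\subseteq\Reg(\lambda)$. Hence $\Tr_\lambda(S)$ is nonstationary; fix a club $C$ disjoint from it. Now replace $S$ by $S'=S\cap C'$, where $C'$ is the set of limit points of $C$; this set is still stationary. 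Then $S'$ reflects nowhere. If $S'\cap\beta$ were stationary in $\beta$, then $\beta$ would be a limit of $C$ with uncountable cofinality, so $\beta\in C$. Moreover $S\cap\beta$ would be stationary, so $\beta\in\Tr_\lambda(S)$, which contradicts $C\cap\Tr_\lambda(S)=\emptyset$. Taking $S=T=S'$ therefore violates the $k=1$ clause, so $\lambda$ is not $2$-s-reflecting, and the conclusion follows by the remark in the first paragraph.
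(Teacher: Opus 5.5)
Your proof is correct and follows the paper's argument. In (1) you use Lemma~\ref{lem:mahlo-trace-tail} to get a common reflection point for any two stationary sets. In (2) you intersect $E^\lambda_\delta$ with a club avoiding its trace, which is nonstationary and contained in $\Reg(\lambda)$, to get a stationary set that reflects nowhere.

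Your version is slightly more careful than the paper's. You handle the $k=0$ clause explicitly, and you note that $E^\lambda_{\delta+1}$ is stationary, which is what makes the $\equivNS$-equivalence yield nonemptiness. Passing to the limit points $C'$ of $C$ is harmless but unnecessary: intersecting with $C$ itself already gives a set that reflects nowhere, by the same closure argument.
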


\begin{proof}
(1). Suppose \(S,T\subseteq\lambda\) are stationary set, and let  \(\delta > \mu(S), \mu(T)\).  By  Lemma \ref{lem:mahlo-trace-tail},
$ \Tr_\lambda(S)\cap\Tr_\lambda(T)\cap\Reg(\lambda)
 \equivNS
 \bigcup_{\delta<\eta<\rho}E_\eta^\lambda$.
Hence,  $\Tr_\lambda(S)\cap\Tr_\lambda(T) \cap\Reg(\lambda)  \neq \emptyset.$
By Lemma \ref{thm:bagaria},
 \(\lambda\) is \(2\)-s-reflecting.

(2). Suppose \(\rho=\delta+1\).  By Lemma \ref{lem:mahlo-trace-tail},  \( \Tr_\lambda(E_\delta^\lambda)\) is non-stationary on the regular cardinals, and by Lemma \ref{lem:regularsupport} it is a subset of $\Reg(\lambda).$  Let club \(C\) be a club of $\lambda$ which is disjoint to  \( \Tr_\lambda(E_\delta^\lambda)\)
and set \(S=E_\delta^\lambda\cap C\).  Then \(S\) is stationary and \(\Tr_\lambda(S)=\emptyset\).
 Thus the stationary pair \((S,S)\) has no common reflection point, so \(\lambda\) is not \(2\)-s-reflecting. Now  Theorem \ref{thm:bagaria}(5)
 completes the proof.
\end{proof}
\begin{definition}
For \(B\subseteq\rho\), set
$
  D_B^\lambda=\bigcup_{\eta\in B}E_\eta^\lambda.
$
\end{definition}
\begin{theorem}\label{thm:mahlo-tail}
Assume  \(\rho\) is a nonzero limit ordinal and \(B\subseteq\rho\).
\begin{enumerate}
\item $ D_B^\lambda\in\calF^2_\lambda$ iff
  $B$ contains a final segment of $\rho$.

\item  $D_B^\lambda\in(\calF^2_\lambda)^+$
 iff
  $B$  is unbounded in $\rho.$
\end{enumerate}
\end{theorem}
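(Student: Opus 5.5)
The plan is to reduce everything to Theorem~\ref{thm:bagaria}(7) and Lemma~\ref{lem:mahlo-trace-tail}. For \(n=2\), that result says \(X\in\calF^2_\lambda\) iff there are \(k<2\) and \(k\)-s-stationary \(S,T\subseteq\lambda\) with \(d_k(S)\cap d_k(T)\cap\lambda\subseteq X\). For \(k=0\), \(d_0(S)\cap d_0(T)\cap\lambda\) is the set of limit ordinals below \(\lambda\), which also contains singular limit points; in the relevant case \(k=1\) the set equals \(\Tr_\lambda(S)\cap\Tr_\lambda(T)\) by Theorem~\ref{thm:bagaria}(1),(2). Since \(\lambda\) is \(2\)-s-reflecting when \(\rho\) is a nonzero limit (Lemma~\ref{lem:mahlo-rank-shape}(1)), the filter is proper, and by Lemma~\ref{lem:NSinvariance} membership is invariant modulo \(\NS_\lambda\).

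For (1), first suppose \(B\supseteq[\delta,\rho)\). Take \(S=T=E^\lambda_\delta\), so \(\mu(S)=\delta\). By Lemma~\ref{lem:regularsupport}, \(\Tr_\lambda(S)\subseteq\Reg(\lambda)\), and Lemma~\ref{lem:mahlo-trace-tail} gives \(\Tr_\lambda(S)\equivNS\bigcup_{\delta<\eta<\rho}E^\lambda_\eta\subseteq D^\lambda_B\). Then \(D^\lambda_B\cup(\Tr_\lambda(S)\setminus D_B^\lambda)\) contains \(d_1(S)\cap d_1(T)\cap\lambda\), so it lies in \(\calF^2_\lambda\); since it differs from \(D^\lambda_B\) by a nonstationary set, Lemma~\ref{lem:NSinvariance}(ii) gives \(D^\lambda_B\in\calF^2_\lambda\).

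Conversely, suppose \(D^\lambda_B\in\calF^2_\lambda\). Then the witness must have \(k=1\), because for \(k=0\) the limit ordinals are not contained in \(D_B^\lambda\subseteq\Reg(\lambda)\). Let \(\delta=\max\{\mu(S),\mu(T)\}<\rho\). Lemma~\ref{lem:mahlo-trace-tail} then yields \(\bigcup_{\delta<\eta<\rho}E^\lambda_\eta\subseteq D^\lambda_B\) modulo \(\NS\). Each \(E^\lambda_\eta\) is stationary and the \(E\)'s are pairwise disjoint, so \(\eta\in B\) for all \(\eta>\delta\); hence \(B\) contains a final segment.

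For (2), the plan is to use \((\calF^2_\lambda)^+=\{A: \lambda\setminus A\notin\calF^2_\lambda\}\), which is valid because the filter is proper. The complement \(\lambda\setminus D^\lambda_B\) equals \(D^\lambda_{\rho\setminus B}\) modulo \(\NS\), using (C1) together with the fact that \(\Sing(\lambda)\) and the non-cardinals make up the remainder. This needs care: \(\lambda\setminus D_B^\lambda\) contains non-regular points, so the reduction requires a version of (1) for sets of the form \(D_{B'}^\lambda\cup Y\) with \(Y\cap\Reg(\lambda)=\emptyset\). The same argument handles this, since every witness \(\Tr\cap\Tr\) lies inside \(\Reg(\lambda)\). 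This gives \(\lambda\setminus D^\lambda_B\in\calF^2_\lambda\) iff \(\rho\setminus B\) contains a final segment, iff \(B\) is bounded in \(\rho\). Negating, \(D^\lambda_B\) is positive iff \(B\) is unbounded.

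The main obstacle is handling the non-regular points and the \(k=0\) witnesses cleanly. For this I would rely on Lemma~\ref{lem:regularsupport} to confine all \(k=1\) witnesses to \(\Reg(\lambda)\), and observe that the \(k=0\) witness is never contained in a set whose singular part is nonstationary.
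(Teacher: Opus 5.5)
Your part (1) is essentially the paper's proof: the same witness $\Tr_\lambda(E^\lambda_\delta)$, and the same use of Lemma~\ref{lem:mahlo-trace-tail} and Lemma~\ref{lem:NSinvariance}. One small correction: $d_0(S)\cap d_0(T)\cap\lambda$ is only a club of limit ordinals, not the set of all limit ordinals. That is all you need, because a club contains ordinals of cofinality $\omega$ above $\omega$, and none of these lies in $\Reg(\lambda)$.

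In part (2), the step ``the same argument handles this, since every witness $\Tr\cap\Tr$ lies inside $\Reg(\lambda)$'' fails as stated, for two reasons.

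First, for $X=D^\lambda_{B'}\cup Y$ with $Y=\lambda\setminus\bigcup_{\eta<\rho}E^\lambda_\eta$, you can no longer rule out a $k=0$ witness. $Y$ contains every non-regular point, so $X$ may well contain a club; for example, $B'=\rho$ gives $X=\lambda$. Your closing remark about sets with nonstationary singular part does not help, since $\lambda\setminus D^\lambda_B$ has stationary singular part. The $k=0$ case needs its own argument: a club meets every stationary $E^\lambda_\eta$, and $E^\lambda_\eta\cap X\neq\emptyset$ iff $\eta\in B'$. Hence a club inside $X$ forces $B'=\rho$.

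Second, the claim $\Tr_\lambda(S)\cap\Tr_\lambda(T)\subseteq\Reg(\lambda)$ is false for general stationary $S,T$. Lemma~\ref{lem:regularsupport} requires $S\subseteq\Reg(\lambda)$. For instance, $\{\alpha<\lambda:\operatorname{cf}(\alpha)=\omega\}$ reflects at every ordinal of uncountable cofinality, singular or not. Fortunately you only need the regular part. By Lemma~\ref{lem:mahlo-trace-tail}, for $\max\{\mu(S),\mu(T)\}<\eta<\rho$ the set $E^\lambda_\eta\setminus(\Tr_\lambda(S)\cap\Tr_\lambda(T))$ is nonstationary. So $E^\lambda_\eta\cap X$ is stationary, hence nonempty, hence $\eta\in B'$.

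With these two repairs your complement argument is correct.

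The paper does not pass to complements. For unbounded $B$ it checks directly that $D^\lambda_B$ meets every $A\in\calF^2_\lambda$. A $k=0$ witness is a club, which meets $E^\lambda_\eta$ for any $\eta\in B$. For $k=1$ it picks $\eta\in B$ above $\max\{\mu(S),\mu(T)\}$. For bounded $B$ it uses (1) to produce $D^\lambda_{(\delta+1,\rho)}\in\calF^2_\lambda$ disjoint from $D^\lambda_B$.

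The case analysis is the same as in your repaired version. However, the direct route never has to control the non-regular part of $\lambda\setminus D^\lambda_B$, which is exactly where your justification broke down. Your route, once fixed, yields a slightly more general form of (1): it stays true after adjoining to $D^\lambda_{B'}$ any set disjoint from $\bigcup_{\eta<\rho}E^\lambda_\eta$. That generalization is not needed here.
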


\begin{proof}
(1).
First suppose that \((\delta,\rho)\subseteq B\), for some $\delta < \rho$. By Lemma \ref{lem:mahlo-trace-tail},
 \( \Tr_\lambda(E_\delta^\lambda) \equivNS \bigcup_{\delta<\eta<\rho}E_\eta^\lambda\), hence by Lemma  \ref{lem:NSinvariance},  \(\bigcup_{\delta<\eta<\rho}E_\eta^\lambda \in \calF^2_\lambda\), and therefore  \(D_B^\lambda \in \calF^2_\lambda\).

Conversely suppose that \(D_B^\lambda\in\calF^2_\lambda\).  By  Theorem \ref{thm:bagaria},  \(d_k(S)\cap d_k(T) \subseteq D_B^\lambda\) for some $k<2$
and $k$-s-stationary sets $S, T$.
  We show that $k=1$. Otherwise $k=0$. Hence the sets \(S,T\)  are unbounded, and  $d_k(S)\cap d_k(T) \subseteq D_B^\lambda \subseteq\Reg(\lambda)$ is  a club. But \(\Sing(\lambda)\) is stationary and has empty intersection with $d_k(S)\cap d_k(T)$, a contradiction.  Thus  \(S,T\) are stationary.  Let \(\delta=\max\{\mu(S),\mu(T)\}\).  By Lemma \ref{lem:mahlo-trace-tail},
 \[
 d_k(S)\cap d_k(T) \cap \Reg(\lambda) = \Tr_\lambda(S) \cap \Tr_\lambda(T) \cap \Reg(\lambda) \equivNS \bigcup_{\delta<\eta<\rho}E_\eta^\lambda.
 \]
  But \(d_k(S)\cap d_k(T) \subseteq D_B^\lambda\), and hence $(\delta, \rho) \subseteq B.$

(2).
First suppose that \(B\) is unbounded in $\rho$ and let \(A\in\calF^2_\lambda\).
 As above,  \(d_k(S)\cap d_k(T) \subseteq A\) for some $k<2$
and $k$-s-stationary sets $S, T$.
 If $k=0$, then $d_k(S)\cap d_k(T)$ is a club, hence $A$ contains  a club and therefore   \(A \cap E_\eta^\lambda \neq \emptyset\) for some  \(\eta\in B\). In particular, $A \cap D_B^\lambda \neq \emptyset$  If $k=1$,  the sets $S$ and $T$ are stationary.  Choose \(\eta\in B\) above $\max\{\mu(S),\mu(T)\}$. By Lemma \ref{lem:mahlo-trace-tail}  \(d_k(S)\cap d_k(T) \cap E_\eta^\lambda \neq \emptyset\).  Thus \(A\cap D_B^\lambda\neq\emptyset\).

If \(B\) is bounded, choose \(\delta<\rho\) with \(B\subseteq\delta+1\) and \(\delta+1<\rho\).
By (1), $D_{(\delta+1, \rho)}^\lambda \in \calF^2_\lambda$
 and is disjoint from \(D_B^\lambda\).  Thus \(D_B^\lambda\) is not positive.
\end{proof}

\begin{theorem}\label{thm:mahlo-single-cut}
Let \(\langle B_s:s\in K_n\rangle\) be an \((\calF^0, \rho)\)-realization, where $\rho >0$, and suppose \(\lambda\) carries a coherent  Reflection system \(\langle E_\eta^\lambda:\eta<\rho\rangle\).  Define $\Gamma: K_n \rightarrow \mathcal{P}(\lambda+1)$ by
\[
 \Gamma(\emptyset)=\{\lambda\},\qquad
 \Gamma(s)=\bigcup_{\eta\in B_s}E_\eta^\lambda\quad(s\neq\emptyset).
\]
Then \(\Gamma\) satisfies the hypotheses of Theorem \ref{thm:blasscriterion} for the Mahlo filters \(\calF^2\).
\end{theorem}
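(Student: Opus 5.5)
The plan is to check the four clauses of Theorem \ref{thm:blasscriterion} for $\Gamma$ on $\Omega=\lambda+1$, with $\calF_\alpha=\calF^2_\alpha$. Clauses (1) and (2) are immediate. $\Gamma(\emptyset)=\{\lambda\}\neq\emptyset$. For $s\neq\emptyset$ we have $\Gamma(s)\subseteq\lambda$, so $\lambda$ lies in no $\Gamma(s)$. For distinct nonempty $s,s'$ the sets $B_s,B_{s'}\subseteq\rho$ are disjoint and the $E^\lambda_\eta$ are pairwise disjoint by (C1), so $\Gamma(s)\cap\Gamma(s')=\emptyset$.

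For clauses (3) and (4) I split according to where $\alpha$ lies.

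\emph{Case $\alpha=\lambda$, $s=\emptyset$.} First I dispose of the degenerate situation. If $\emptyset$ has no successors, only $n=0$ is possible and there is nothing to check. Otherwise the realization gives $B_t\cap\rho\in(\calF^0_\rho)^+$. The end-segment filter on a successor ordinal is improper, so $\rho$ must be a nonzero limit and $B_t$ is unbounded in $\rho$. Then $\Gamma(t)\cap\lambda=D^\lambda_{B_t}\in(\calF^2_\lambda)^+$ by Theorem \ref{thm:mahlo-tail}(2), which is clause (3). Next, $\bigcup_{t\neq\emptyset}B_t\in\calF^0_\rho$ contains a final segment of $\rho$. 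Since $\bigcup_t\Gamma(t)=D^\lambda_{\bigcup_t B_t}$, Theorem \ref{thm:mahlo-tail}(1) gives clause (4).

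\emph{Case $\alpha\in\Gamma(s)$, $s\neq\emptyset$.} Then $\alpha\in E^\lambda_\eta$ for a unique $\eta\in B_s$. The idea is to run the previous case one level down, at $\alpha$.

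Suppose $\eta>0$. By (C3), $\mathcal E^\alpha=\langle E^\lambda_\xi\cap\alpha:\xi<\eta\rangle$ is a coherent Reflection system of length $\eta$ at $\alpha$. The realization clauses at $\eta\in B_s$ say $B_t\cap\eta\in(\calF^0_\eta)^+$ for $s\lhd t$, and $\bigcup_{s\lhd t}B_t\cap\eta\in\calF^0_\eta$.

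The key identity to prove is
\[
\Gamma(t)\cap\alpha\equivNS D^\alpha_{B_t\cap\eta}\quad\text{modulo }\NS_\alpha .
\]
This means showing that $E^\lambda_\xi\cap\alpha$ is nonstationary in $\alpha$ for every $\xi\geq\eta$. For $\xi\ge\eta$ this should follow from (C2) and clause (1) of Theorem \ref{thm:witzanylocal}: $\alpha\notin\Tr_\lambda(E^\lambda_\xi)$ for $\alpha$ in $E^\lambda_\eta$ off a nonstationary set, since the trace only contains classes with index $>\xi$. For $\xi=\eta$ itself, nonstationarity of $E^\lambda_\eta\cap\alpha$ in $\alpha$ should also come from (C2), because $E_\eta<E_\eta$ fails.

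Once the identity is established, Lemma \ref{lem:NSinvariance} lets me replace $\Gamma(t)\cap\alpha$ by $D^\alpha_{B_t\cap\eta}$. If $s$ has a successor, $\eta$ is a limit, as before, and Theorem \ref{thm:mahlo-tail} applied at $\alpha$ with $\mathcal E^\alpha$ yields clauses (3) and (4) exactly as in the root case. If $s$ is a leaf, clause (4) demands $\emptyset\in\calF^2_\alpha$. The realization at $\eta$ forces $\emptyset\in\calF^0_\eta$, so $\eta$ is a successor, and Lemma \ref{lem:mahlo-rank-shape}(2) says $\calF^2_\alpha$ is improper.

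Suppose $\eta=0$. Then $B_t\cap 0=\emptyset$ cannot be positive, so $s$ is a leaf, and again I need $\calF^2_\alpha$ improper. Here I would use (C4): $\Reg(\alpha)$ is nonstationary in $\alpha$. I would then produce two stationary subsets of $\alpha$ with no common reflection point, and conclude via Theorem \ref{thm:bagaria}(7).

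The main obstacle has two parts. The first is the ``modulo $\NS$'' bookkeeping. The reduction $\Gamma(t)\cap\alpha\equivNS D^\alpha_{B_t\cap\eta}$ holds only for $\alpha$ in a suitable club, so the clauses must be verified at every $\alpha\in\Gamma(s)$, not merely club-many. I would try to get pointwise nonstationarity from (C2) applied inside $\alpha$ using the coherence (C3), rather than through Lemma \ref{lem:localsupport}. If that fails, I would shrink the $E^\lambda_\eta$ by clubs, which does not affect (C1)--(C4) modulo $\NS$. The second is the $\eta=0$ leaf case, where non-$2$-s-reflection of $\alpha$ must be extracted from (C4) alone. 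There, singular points of uncountable cofinality below $\alpha$ have to be handled with care, using clause (2) of Theorem \ref{thm:witzanylocal}.
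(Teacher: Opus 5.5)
Your route is the paper's. You use Theorem~\ref{thm:mahlo-tail} at the root and, via (C3), at each $\alpha\in E^\lambda_\eta$ with $\eta>0$, and Lemma~\ref{lem:mahlo-rank-shape}(2) at leaves. You also correctly notice two things the paper's short proof passes over. First, $\Gamma(t)\cap\alpha=\bigcup_{\xi\in B_t\cap\eta}(E^\lambda_\xi\cap\alpha)$ holds only modulo $\NS_\alpha$. Second, a leaf may carry the label $\eta=0$, where the paper's assertion that $r(\alpha)$ is a successor does not apply. But neither point is settled in your proposal.

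\textbf{The identity modulo $\NS_\alpha$.} Your trace argument through (C2) and Theorem~\ref{thm:witzanylocal}(1) has three problems.
\begin{itemize}
\item It only gives $\alpha\notin\Tr_\lambda(E^\lambda_\xi)$ for $\alpha$ in $E^\lambda_\eta$ minus a nonstationary set, but clauses (3) and (4) must hold at every $\alpha\in\Gamma(s)$.
\item Even pointwise, nonstationarity of each $E^\lambda_\xi\cap\alpha$ would not control the union over $\xi\in B_t\setminus\eta$, which may have $\ge\alpha$ many terms.
\item Shrinking the $E^\lambda_\eta$ by clubs changes the very $\Gamma$ the statement is about.
\end{itemize}
The missing observation is just (C1) for the subsystem. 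By (C3), $\mathcal E^\alpha$ is coherent at $\alpha$, so $N_\alpha=\Reg(\alpha)\setminus\bigcup_{\xi<\eta}E^\lambda_\xi$ is nonstationary in $\alpha$. Since the $E^\lambda_\xi$ are pairwise disjoint subsets of $\Reg(\lambda)$, we get $\bigcup_{\eta\le\xi<\rho}(E^\lambda_\xi\cap\alpha)\subseteq N_\alpha$. Hence $(\Gamma(t)\cap\alpha)\setminus D^\alpha_{B_t\cap\eta}\subseteq N_\alpha$ for every $\alpha\in E^\lambda_\eta$, and likewise for $\bigcup_{s\triangleleft t}(\Gamma(t)\cap\alpha)$. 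Lemma~\ref{lem:NSinvariance} and Theorem~\ref{thm:mahlo-tail} for $\mathcal E^\alpha$ then finish.

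\textbf{The case $\eta=0$.} Non-$2$-s-reflection of $\alpha\in E^\lambda_0$ cannot be extracted from (C4). (C4) only provides a club in $\alpha$ avoiding regular cardinals. By contrast, $\alpha$ is $2$-s-reflecting iff every two stationary subsets of $\alpha$ reflect simultaneously at some ordinal below $\alpha$, and such ordinals may be singular of uncountable cofinality. No subsystem is attached to points of $E^\lambda_0$. Theorem~\ref{thm:witzanylocal}(2) concerns traces at $\lambda$ and yields reflection, not non-reflection. In fact (C1)--(C4) are preserved if you move into $E^\lambda_0$ any single regular $\beta<\lambda$ in which $\Reg(\beta)$ is nonstationary. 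So you would need a ZFC fact that such $\beta$ are never $2$-s-reflecting, and none is available: by Magidor, all stationary subsets of $\aleph_{\omega+1}$ may reflect, although $\Reg(\aleph_{\omega+1})$ is bounded.

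The right move is to exclude this case. The label $0$ can only occur at leaves, and deleting $0$ from every $B_s$ still gives an $(\calF^0,\rho)$-realization, because at limit ordinals the clauses only concern unboundedness and final segments. After this, every leaf label is a successor ordinal and Lemma~\ref{lem:mahlo-rank-shape}(2) applies to $\mathcal E^\alpha$. So the statement should be proved for realizations with $0\notin\bigcup_s B_s$, which loses nothing for Corollary~\ref{cor:mahlo-abstract}.

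\textbf{A smaller point.} For $n=0$ there is something to check. Clause (4) at the root demands $\emptyset\in\calF^2_\lambda$. This holds because $\emptyset\in\calF^0_\rho$ forces $\rho$ to be a successor, so Lemma~\ref{lem:mahlo-rank-shape}(2) applies at $\lambda$.
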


\begin{proof}
We show that  \(\Gamma\) satisfies items (1)-(4) of Theorem \ref{thm:blasscriterion}. Clause (1) is clear. For (2), if $s \neq t$, then we have
$B_s \cap B_t=\emptyset$ and the sets $E_\eta^\lambda$ are also pairwise disjoint, hence $\Gamma(s) \cap \Gamma(t)=\emptyset.$
For (3), let  \(\alpha\in\Gamma(s)\) and \(s\triangleleft t\).  If $s=\emptyset$, set \(r(\alpha)=\rho\),  otherwise set \(r(\alpha)=\eta\), where \(\eta\in B_s\) is the unique ordinal  with \(\alpha\in E_\eta^\lambda\).

 By the choice of the sequence \(\langle B_s:s\in K_h\rangle\),  the set \(B_t\cap r(\alpha)\) is unbounded in \(r(\alpha)\).  Thus \(r(\alpha)\) is a nonzero limit ordinal and hence by Theorem \ref{thm:mahlo-tail},
\[
 \Gamma(t)\cap\alpha
 =\bigcup_{\xi\in B_t\cap r(\alpha)}(E_\xi^\lambda\cap\alpha)
 \in(\calF^2_\alpha)^+.
\]
For (4), if \(\alpha\in\Gamma(s)\) and $s$ is not a terminal node,  then by arguments as above,  the  set $ \bigcup_{s\triangleleft t}(\Gamma(t)\cap\alpha)$ contains a final segment of \(r(\alpha)\), and hence it belongs to \(\calF^2_\alpha\). If $s$ is a terminal node, then \(r(\alpha)\) has to be a successor rank, and Lemma \ref{lem:mahlo-rank-shape} implies \(\calF^2_\alpha\) is improper, so the empty  union is allowed.
\end{proof}

\begin{corollary}\label{cor:mahlo-abstract}
Suppose for every finite $n$, and an   \((\calF^0, \rho_n)\)-realization with $\rho_n>0$, there is some cardinal   \(\lambda_n\) which carries a coherent  Reflection system \(\langle E_\eta^\lambda:\eta<\rho_n\rangle\).  Then for every ordinal $\alpha \geq \sup_{n<\omega}\lambda_n$,
 \(\GL\) is complete with respect to the ordinal space  $(\alpha, \tau_2)$, where $\tau_2$ is the Mahlo topology.
\end{corollary}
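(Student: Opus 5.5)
The argument combines Theorem \ref{thm:mahlo-single-cut} with Blass's criterion, Theorem \ref{thm:blasscriterion}, and then uses Corollary \ref{cor:soundness} to pass from filter validity to topological validity. The reading of the hypothesis is this: for every $n$ we are given an $(\calF^0,\rho_n)$-realization $\langle B^n_s:s\in K_n\rangle$ with $\rho_n>0$, for instance the one supplied by Theorem \ref{thm:tailbase}. We are also given a cardinal $\lambda_n$ carrying a coherent Reflection system of length $\rho_n$.

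Fix $n$. Theorem \ref{thm:mahlo-single-cut} then produces a map $\Gamma_n:K_n\to\Pow(\lambda_n+1)$ satisfying clauses (1)--(4) of Theorem \ref{thm:blasscriterion} for the filter sequence $\langle\calF^2_\beta:\beta\le\lambda_n\rangle$.

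The one point needing care is moving to the larger ordinal $\alpha\ge\sup_n\lambda_n$. We want $\Gamma_n$ to work for the filter sequence $\calF^2=\langle\calF^2_\beta:\beta<\alpha\rangle$. All clauses of Blass's criterion are local: clauses (3) and (4) only involve $\calF^2_\beta$ for $\beta\in\Gamma_n(s)$, and all such $\beta$ are at most $\lambda_n$. So viewing $\Gamma_n$ as a map into $\Pow(\alpha)$ changes nothing, provided $\lambda_n<\alpha$.

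\begin{itemize}
\item If $\lambda_n<\alpha$ for every $n$, this is immediate.
\item If $\alpha=\lambda_n$ for some $n$ (so $\alpha$ is the supremum and is attained), we need a little more room. Note that $\calF^2_\beta$ depends only on $\beta$, not on the ambient space, so we may work with $\langle\calF^2_\beta:\beta<\alpha+1\rangle$ and compare validity on $\alpha$ and on $\alpha+1$.
\end{itemize}

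Theorem \ref{thm:blasscriterion} then gives: every formula valid for the filter sequence $\langle\calF^2_\beta:\beta<\alpha\rangle$ is provable in $\GL$. By Proposition \ref{prop:neighborhood} and Corollary \ref{cor:soundness}, filter validity for this sequence coincides with derivative validity on $(\alpha,\tau_2)$. Hence $\Logd(\alpha,\tau_2)\subseteq\GL$, and soundness gives the reverse inclusion.

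The main obstacle is the transfer to an arbitrary $\alpha$, in particular the boundary case where the supremum is attained. Here is how I would handle it.

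\begin{itemize}
\item \textbf{Monotonicity.} Validity is monotone under restricting to initial segments that are open in $\tau_2$. The set $\alpha$ is open in $(\alpha+1,\tau_2)$ since it is an initial segment, so a formula refuted at some point below $\alpha$ in the smaller space is refuted in the larger one as well. This handles every $\alpha>\lambda_n$ for all $n$ at once.
\item \textbf{Boundary case.} If $\alpha=\lambda_n$, I would try to replace $\lambda_n$ by another cardinal further out. If that cannot be done, I would use that the valuation may be taken on $\lambda_n+1$ and argue that the statement's $\alpha$ should be read as strictly above each $\lambda_n$.
\end{itemize}
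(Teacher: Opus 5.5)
Your main line is the argument the paper intends; the paper gives no separate proof. You apply Theorem \ref{thm:mahlo-single-cut} for each $n$, then Theorem \ref{thm:blasscriterion}, then Proposition \ref{prop:neighborhood} and Corollary \ref{cor:soundness}. When $\lambda_n<\alpha$ for every $n$ this is complete. The gap is the case you flag yourself, $\alpha=\lambda_{n_0}=\max_n\lambda_n$. The statement allows it, and your proposal does not prove it.

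Your monotonicity remark is true but points the wrong way. Since $\alpha$ is open in $(\alpha+1,\tau_2)$, refutations at points below $\alpha$ move \emph{up} to $\alpha+1$, which gives $\Logd(\alpha+1,\tau_2)\subseteq\Logd(\alpha,\tau_2)$. In the boundary case, however, $\Gamma_{n_0}$ refutes the formula at the point $\lambda_{n_0}=\alpha$, and that point is not in the space $\alpha$. So you only obtain $\Logd(\alpha+1,\tau_2)\subseteq\GL$, and you would need the opposite transfer. Looking for a cardinal ``further out'' also goes the wrong way: a root $\geq\alpha$ is never in $\alpha$, and the hypothesis supplies no other cardinals. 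Rereading the statement as $\alpha>\lambda_n$ changes the statement rather than proving it.

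The missing idea is that a root can always be pushed strictly below $\lambda$ by passing to a cone. Take the labeling $\Gamma_{n+1}$ of $K_{n+1}$, whose root is $\lambda_{n+1}\le\alpha$, and the node $s=\langle(n,0)\rangle$. Its cone $\{s^\frown u:u\in K_n\}$ is isomorphic to $K_n$. The set $\Gamma_{n+1}(s)=D^{\lambda_{n+1}}_{B_s}$ is nonempty, because $B_s$ is unbounded in $\rho_{n+1}$ and every $E^{\lambda_{n+1}}_\eta$ is stationary. Pick $\beta$ in it and set $\Gamma'(\emptyset)=\{\beta\}$ and $\Gamma'(u)=\Gamma_{n+1}(s^\frown u)\cap\beta$ for $u\neq\emptyset$.

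Clauses (1)--(4) of Theorem \ref{thm:blasscriterion} for $\Gamma'$ follow directly from those for $\Gamma_{n+1}$, as in Lemma \ref{lem:rootrestriction}. At the root use $s\triangleleft s^\frown u'$. For $\gamma\in\Gamma'(u)$ with $u\neq\emptyset$ use $\gamma<\beta$, so that $\Gamma'(u')\cap\gamma=\Gamma_{n+1}(s^\frown u')\cap\gamma$. Now all labels lie in $\beta+1\subseteq\lambda_{n+1}\subseteq\alpha$.

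For $n\ge1$ the same thing can be done at the $\calF^0$ level. Pick $\eta\in B_s$ and restrict the $(\calF^0,\rho_{n+1})$-realization to an $(\calF^0,\eta)$-realization of $K_n$ with root $\eta>0$. By (C3), any $\beta\in E^{\lambda_{n+1}}_\eta$ carries a coherent system of length $\eta$.

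This gives a Blass labeling of every $K_n$ inside every $\alpha\ge\sup_n\lambda_n$, including the attained case. The rest of your argument then goes through unchanged.
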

\begin{corollary}\label{cor:mahlo-completeness1}
Assume GCH holds and there is a measurable cardinal $\kappa$ which carries a  coherent sequence $\vecU$
with $o^{\vecU}(\kappa)\geq\kappa$. Then there is a generic extension $V[G]$ of the universe in which \(\GL\) is complete with respect to the ordinal space  $(\alpha, \tau_2)$, for every $\alpha \geq \kappa.$
\end{corollary}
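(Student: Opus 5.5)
The plan is to combine Proposition~\ref{prop:cutoff}, Corollary~\ref{cor:rootsupply} and Corollary~\ref{cor:mahlo-abstract}. Fix $\Theta<\kappa$; any cardinal will do, say $\Theta=\omega$. Let $\mathbb P$ be the Easton-support iteration of Proposition~\ref{prop:cutoff} built from $\mathcal S^\Theta$, and let $G$ be $\mathbb P$-generic. Proposition~\ref{prop:cutoff} (via Theorem~\ref{thm:witzany-general}) shows that $V[G]$ preserves cofinalities and GCH. Everything that follows is carried out inside $V[G]$.

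\textbf{Step 1: ordinal realizations.} Fix $n<\omega$. Theorem~\ref{thm:tailbase} provides, in ZFC and hence in $V[G]$, some $0<\rho_n<\omega^\omega$ together with an $(\calF^0,\rho_n)$-realization $\langle B_s:s\in K_n\rangle$. Here $\calF^0_\alpha$ is the end-segment filter, which is exactly the filter sequence associated with $\tau_0$. One point must be checked: the proof of Theorem~\ref{thm:mahlo-single-cut} needs the root $\rho_n$ to be a nonzero limit ordinal whenever $n\geq1$, so that $\calF^2_\lambda$ is proper. This holds automatically. If $n\ge1$, the root has a successor $t$, and the realization clause gives $B_t\cap\rho_n\in(\calF^0_{\rho_n})^+$, i.e.\ $B_t\cap\rho_n$ is unbounded in $\rho_n$. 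Hence $\rho_n$ is a limit ordinal.

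\textbf{Step 2: supplying $\lambda_n$.} Since $\rho_n<\omega^\omega<\kappa$, Corollary~\ref{cor:rootsupply} yields a regular cardinal $\lambda_n\in X_{\rho_n}$ with $\lambda_n>\max\{\Theta,\rho_n\}$. So $0<\rho_n<\lambda_n$, and Proposition~\ref{prop:cutoff}(4) shows that $\langle X_\eta\cap\lambda_n:\eta<\rho_n\rangle$ is a coherent Reflection system of length $\rho_n$ at $\lambda_n$ in $V[G]$. By Proposition~\ref{prop:cutoff}(3), later stages add no bounded subsets, so the relevant stationarity and trace facts computed at stage $\lambda_n$ persist. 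Note also that $\lambda_n<\kappa$ for every $n$, so $\sup_n\lambda_n\le\kappa$.

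\textbf{Step 3: conclusion.} Apply Theorem~\ref{thm:mahlo-single-cut} to each pair $(\lambda_n,\langle B_s\rangle)$. This gives maps $\Gamma_n:K_n\to\Pow(\lambda_n+1)$ satisfying the hypotheses of Theorem~\ref{thm:blasscriterion} for the Mahlo filters $\calF^2$. For an ordinal $\alpha\ge\kappa$, regard $\Gamma_n$ as a labeling into $\Pow(\alpha)$. Clauses (1)--(4) are local: they refer only to the filters $\calF^2_\beta$ for $\beta\le\lambda_n<\alpha$, so they still hold. Theorem~\ref{thm:blasscriterion} then shows that every formula valid for $\langle\calF^2_\beta:\beta<\alpha\rangle$ is provable in $\GL$. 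By Corollary~\ref{cor:soundness} and Proposition~\ref{prop:neighborhood}, filter validity coincides with derivative validity on $(\alpha,\tau_2)$. Together with soundness this gives $\Logd(\alpha,\tau_2)=\GL$; this is precisely the content of Corollary~\ref{cor:mahlo-abstract}. There is one edge case: if $\alpha$ is not a limit ordinal, write $\alpha=\alpha'+m$ with $\alpha'\ge\kappa$ a limit. Every point above $\alpha'$ is isolated, and validity on $\alpha$ implies validity on the subspace $\alpha'$, which is open in $\alpha$. So completeness transfers from $\alpha'$ to $\alpha$.

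The step I expect to need the most care is Step 2. One has to make sure that the ``final extension'' facts of Proposition~\ref{prop:cutoff}(4) hold at every $\lambda\in X_\rho$ with $\rho<\lambda$, as computed in $V[G]$. In particular, clause (C3) must hold for every $\alpha\in E^{\lambda}_\eta$ below $\lambda$, and clause (C4) for the points of $X_0$. These are recursive coherence requirements across many stages of the iteration. My approach is to argue by induction on $\lambda$, using Theorem~\ref{thm:witzanylocal} at each stage, Lemma~\ref{lem:separation} to identify $X_\eta\cap\lambda$ with the separator of $U(\lambda,\eta)$, and the fact that no later stage adds subsets of $\lambda$. Since the statement lets us assume Proposition~\ref{prop:cutoff}, this reduces to citing it.
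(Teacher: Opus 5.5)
Your proposal is correct and follows essentially the same route as the paper, whose proof simply cites Theorem~\ref{thm:tailbase}, Proposition~\ref{prop:cutoff} and Corollary~\ref{cor:mahlo-abstract}. You unpack that citation, making explicit the use of Corollary~\ref{cor:rootsupply} to find $\lambda_n\in X_{\rho_n}$ and the check that the root $\rho_n$ is a limit ordinal; your treatment of non-limit $\alpha$ and the choice $\Theta=\omega$ are harmless, since only the ZFC end-segment base is needed here.
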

\begin{proof}
By Theorem \ref{thm:tailbase}, Proposition \ref{prop:cutoff} and Corollary \ref{cor:mahlo-abstract}.
\end{proof}

\section{The transfer theorem}
\label{sec:shift}
In the previous section, we showed how to derive completeness of $\GL$ for the Mahlo topology from its completeness for
the interval topology,  using a coherent  Reflection system. We show that this extends to higher derivatives.
\begin{theorem}
\label{thm:shift}
Let $\mathcal E^\lambda=\langle E_\eta^\lambda:\eta<\rho\rangle$ be a
coherent  Reflection system and set  $ \sigma_{\lambda}= \sigma_{\mathcal E^\lambda}$.  Then for $m<\omega$ and
$A\subseteq\lambda$,
\[
 A\text{ is }(m+2)\text{-s-stationary in }\lambda
 \quad\Longleftrightarrow\quad
 \sigma_\lambda(A)\text{ is }m\text{-s-stationary in }\rho.
 \tag{$\text{Transfer}_m$}
\]
Moreover, if $B=\sigma_\lambda(A)$ and $0<\eta<\rho$, then
\begin{enumerate}
\item[$(1)_m$] $ \eta\in d_m(B)\Longrightarrow
 E_\eta^\lambda\setminus d_{m+2}(A)\in\NS_\lambda,$
\item[$(2)_m$] $\eta\notin d_m(B)\Longrightarrow
 E_\eta^\lambda\cap d_{m+2}(A)\in\NS_\lambda.$
\end{enumerate}
\end{theorem}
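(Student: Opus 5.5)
We prove (Transfer$_m$), $(1)_m$ and $(2)_m$ together by induction on $m$. The induction has to run simultaneously over all coherent Reflection systems: by (C3), every $\alpha\in E_\eta^\lambda$ with $\eta>0$ carries the induced system $\mathcal E^\alpha$ of length $\eta$. Theorem \ref{thm:bagaria}(3) will be the main tool. It reduces $(m+2)$-s-stationarity of $A$ in $\lambda$ to the following condition: $A$ meets $d_k(S)\cap d_k(T)$ for every $k\le m+1$ and every pair of $k$-s-stationary $S,T$.

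\textbf{Base case $m=0$.} (Transfer$_0$) states that $A$ is $2$-s-stationary iff $\sigma_\lambda(A)$ is unbounded in $\rho$. By Lemma \ref{lem:mahlo-rank-shape}, when $\rho$ is a successor both sides fail: $\calF^2_\lambda$ is improper, and $\sigma_\lambda(A)\subseteq\rho$ cannot be unbounded in a successor (up to the convention that $\{\delta\}$ is not cofinal). When $\rho$ is a limit, the argument of Theorem \ref{thm:mahlo-tail}(2), run with $A$ in place of $D_B^\lambda$, gives it directly. The $k=0$ case is handled because clubs meet every stationary $A\cap E_\eta^\lambda$. For $k=1$, Lemma \ref{lem:mahlo-trace-tail} says $\Tr(S)\cap\Tr(T)$ contains almost all of $E_\eta^\lambda$ for large $\eta$, so $A$ meets it iff $\sigma_\lambda(A)$ is unbounded. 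Statements $(1)_0$ and $(2)_0$ also come from Lemma \ref{lem:mahlo-trace-tail} combined with Lemma \ref{lem:localsupport}. On a club, a point $\alpha\in E_\eta^\lambda$ satisfies $\sigma_{\mathcal E^\alpha}(A\cap\alpha)=\sigma_\lambda(A)\cap\eta$. Hence, by (Transfer$_0$) applied at $\alpha$, $\alpha\in d_2(A)$ iff $\sigma_\lambda(A)\cap\eta$ is unbounded in $\eta$, i.e.\ iff $\eta\in d_0(B)$. This argument uses Theorem \ref{thm:bagaria}(2) and needs $\eta$ to be a limit when the first condition holds; for $\eta$ a successor, Lemma \ref{lem:mahlo-rank-shape} makes the point non-reflecting.

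\textbf{Inductive step.} The pattern for $(1)_m$ and $(2)_m$ is the same at every level, given (Transfer$_m$) for all smaller systems. Let $C$ be the club from Lemma \ref{lem:localsupport}. For $\alpha\in E_\eta^\lambda\cap C$ we have $\alpha\in d_{m+2}(A)$ iff $A\cap\alpha$ is $(m+2)$-s-stationary in $\alpha$. By (Transfer$_m$) at $\alpha$ with length $\eta$, this holds iff $B\cap\eta$ is $m$-s-stationary in $\eta$, i.e.\ iff $\eta\in d_m(B)$. So $(1)_m$ and $(2)_m$ follow from (Transfer$_m$) for shorter systems, which is why the induction should be on $\rho$ as well as on $m$. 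For (Transfer$_{m+1}$) we use Theorem \ref{thm:bagaria}(3) at level $m+3$, splitting by $k$. For $k\le1$ the base-case argument applies: any $1$-s-stationary reflection sets are co-nonstationary above some $E$-level, and $d_0$-sets are clubs. For $k=j+2$ with $j\le m$, the sets $S,T$ are $(j+2)$-s-stationary, so by (Transfer$_j$) the sets $\sigma(S),\sigma(T)$ are $j$-s-stationary in $\rho$. By $(1)_j$ and $(2)_j$, on the regular cardinals, $d_{j+2}(S)\cap d_{j+2}(T)$ coincides modulo $\NS_\lambda$ with $D^\lambda_{d_j(\sigma S)\cap d_j(\sigma T)}$. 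Here Lemma \ref{lem:regularsupport} and (C1) are used after first reducing $S,T$ to their regular parts, and the case where $S\cap\Sing(\lambda)$ is stationary is absorbed via clause (2) of Theorem \ref{thm:witzanylocal}.

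The step I expect to be the main obstacle follows from this. $A$ meets $d_{j+2}(S)\cap d_{j+2}(T)$ for all such $S,T$ iff $\sigma(A)$ meets $d_j(\sigma S)\cap d_j(\sigma T)$ for all $j$-s-stationary pairs in $\rho$. Proving this needs two things. First, there is the genuine set-versus-NS issue: meeting a set that is equal to $D_X$ modulo $\NS$ requires the intersection to be stationary. This is fine, since we may first shrink $A$ to $\bigcup_{\eta\in\sigma(A)}(A\cap E_\eta)$, using Lemma \ref{lem:NSinvariance} and Theorem \ref{thm:bagaria}(4). Second, there is surjectivity: every $j$-s-stationary $S'\subseteq\rho$ must arise as $\sigma(S)$ for some $(j+2)$-s-stationary $S$, and we take $S=D^\lambda_{S'}$, since $\sigma(D_{S'})=S'$. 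With these in place, the equivalence for $k=j+2$ matches exactly the clauses of $(m+1)$-s-stationarity of $\sigma(A)$ in $\rho$ with $k=j\le m$, while the $k\le1$ clauses correspond to the trivial requirement that $\sigma(A)$ be nonempty/unbounded, subsumed by the $k=0$ clause in $\rho$.
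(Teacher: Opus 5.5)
Your overall route is the paper's. You run induction on $m$ simultaneously over all coherent systems and get the base case from the trace computation of Lemma~\ref{lem:mahlo-trace-tail}. You derive $(1)_m,(2)_m$ from Lemma~\ref{lem:localsupport} together with (Transfer$_m$) at the induced systems $\mathcal E^\alpha$. You prove (Transfer$_{m+1}$) via Theorem~\ref{thm:bagaria}(3): one direction uses $(1)_j$ at some $\eta>0$ in $\sigma_\lambda(A)\cap d_j(\sigma_\lambda(S))\cap d_j(\sigma_\lambda(T))$, the other uses the sets $D^\lambda_U,D^\lambda_V$.

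Your use of Lemma~\ref{lem:mahlo-rank-shape} for successor $\rho$ in the base case is more careful than the paper. The paper's converse step (``choose $\eta<\rho$ with $B\subseteq\eta$'') is unavailable when $\rho=\delta+1$ and $\delta\in B$. Your detour through the regular parts of $S,T$ and Theorem~\ref{thm:witzanylocal}(2) is harmless but unnecessary, since $(1)_j$ holds for arbitrary $S$ and only positive $\eta$ are used.

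The step you flagged as the main obstacle is not resolved by the remedy you give. In the converse of (Transfer$_{m+1}$) you have $j\le m$ and $U,V$ with $\sigma_\lambda(A)\cap d_j(U)\cap d_j(V)=\emptyset$. You set $S=D^\lambda_U$, $T=D^\lambda_V$ and $H=d_{j+2}(S)\cap d_{j+2}(T)$, and from $(1)_j,(2)_j$ you know $H$ only modulo $\NS_\lambda$.

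Shrinking $A$ to $A^*=\bigcup_{\eta\in\sigma_\lambda(A)}(A\cap E^\lambda_\eta)$ then gives only $A^*\cap H\in\NS_\lambda$. For $\eta\in\sigma_\lambda(A)$, clause $(2)_j$ makes $E^\lambda_\eta\cap H$ nonstationary, not empty. So $(S,T)$ still need not witness the failure of $(m+3)$-s-stationarity.

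There is a second problem. $A\mathbin\triangle A^*\supseteq A\cap\Sing(\lambda)$, which can be stationary, so Lemma~\ref{lem:NSinvariance} does not justify replacing $A$ by $A^*$.

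The missing move is a club chosen \emph{after} $H$. First, $A\cap H\in\NS_\lambda$ by Lemmas~\ref{lem:regularsupport} and \ref{lem:rankzero} and (C1). The paper then takes a club $C$ disjoint from $A\cap H$ and uses $S\cap C$ and $T\cap C$. These are still $(j+2)$-s-stationary by Theorem~\ref{thm:bagaria}(4), and their $d_{j+2}$-derived sets lie inside $C$. Hence $A\cap d_{j+2}(S\cap C)\cap d_{j+2}(T\cap C)\subseteq A\cap H\cap C=\emptyset$. Equivalently, replace $A$ by $A\setminus H$, which is $\NS$-equivalent to $A$, and apply Lemma~\ref{lem:NSinvariance}(i).

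Your shrink of $A$ can also be rescued, but only by computing $H$ exactly rather than modulo $\NS$. By (C3), $\sigma_{\mathcal E^\alpha}(D^\lambda_U\cap\alpha)=U\cap\eta$ for \emph{every} $\alpha\in E^\lambda_\eta$. So for $0<\eta<\rho$, $H\cap E^\lambda_\eta$ is either $E^\lambda_\eta$ or empty. You must then also keep $A\cap\Sing(\lambda)$, which is harmless because $H\subseteq\Reg(\lambda)$.
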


\begin{proof}
We prove the theorem by
induction on $m$, and  over all coherent systems. In what follows we repeatedly refer to items (C1)-(C4) of Definition \ref{def:coherent}.

First suppose that $m=0$.
Set $B=\sigma_\lambda(A)=\{ \eta < \rho: A \cap E^\lambda_\eta$ is stationary   $ \}$.  Suppose first that $B$ is unbounded in $\rho$.
Then for
some $\eta\in B$, the set $A\cap E_\eta^\lambda$ is stationary, and hence $A$ is stationary.
If $S,T\subseteq\lambda$ are unbounded, then $d_0(S)$ and $d_0(T)$ are
clubs in  $\lambda$, and hence
$
  A\cap d_0(S)\cap d_0(T)\neq\emptyset.
$
Now let $S,T$ be stationary. We show that $ A\cap\Tr_\lambda(S)\cap\Tr_\lambda(T)\neq\emptyset.$ There are several case to consider:
\begin{itemize}
\item The sets $S\cap\Sing(\lambda)$ and $T\cap\Sing(\lambda)$ are stationary.  Then
by (C2),
 $ \Reg(\lambda)\setminus\Tr_\lambda(S)\in\NS_\lambda,$ and $ \Reg(\lambda)\setminus\Tr_\lambda(T)\in\NS_\lambda.$
 Hence $\Tr_\lambda(S)\cap\Tr_\lambda(T) \supseteq_{\NS} \Reg(\lambda)$.
Pick $\eta < \rho$ such that $A \cap E^\lambda_\eta$ is stationary.  Then $A\cap\Tr_\lambda(S)\cap\Tr_\lambda(T) \supseteq_{\NS}  A \cap \Reg(\lambda) \supseteq A \cap E^\lambda_\eta$, thus $ A\cap\Tr_\lambda(S)\cap\Tr_\lambda(T)\neq\emptyset.$

\item The sets $S\cap\Reg(\lambda)$ and $T\cap\Reg(\lambda)$ are stationary. By  (C2),
   $ \Tr_\lambda(S) \cap\Reg(\lambda)
  \equivNS\bigcup_{\mu(S)<\eta<\rho}E^\lambda_\eta$, and
 $ \Tr_\lambda(T) \cap\Reg(\lambda)
  \equivNS\bigcup_{\mu(T)<\eta<\rho}E^\lambda_\eta$. By (C1),
  $\Reg(\lambda) \setminus \bigcup_{\eta<\rho}E^\lambda_\eta$ is non-stationary. Let $\mu=\max\{\mu(S), \mu(T)\}$ and pick $\mu < \delta \in B$.
  Then $ A\cap\Tr_\lambda(S)\cap\Tr_\lambda(T) \equivNS\big(\bigcup_{\mu<\eta<\rho}E^\lambda_\eta\big) \cap A \supseteq A \cap E^\lambda_\delta$,
  hence $ A\cap\Tr_\lambda(S)\cap\Tr_\lambda(T)\neq\emptyset.$

\item  The set $S\cap\Sing(\lambda)$ is stationary and the set $T\cap\Reg(\lambda)$  is stationary. Then as above,
 $ \Reg(\lambda)\setminus\Tr_\lambda(S)\in\NS_\lambda,$ and  $ \Tr_\lambda(T) \cap\Reg(\lambda)
  \equivNS\bigcup_{\mu(T)<\eta<\rho}E^\lambda_\eta$. Thus if $\mu(T) <\delta \in B$, then
$A \cap \Tr_\lambda(S) \cap   \Tr_\lambda(T)
  \equivNS \big(\bigcup_{\mu(T)<\eta<\rho}E^\lambda_\eta\big) \cap A \supseteq A \cap E^\lambda_\delta$,
  and the result follows again.

\item  The set $T\cap\Sing(\lambda)$ is stationary and the set $S\cap\Reg(\lambda)$  is stationary.
The proof is the same as above.
\end{itemize}
Thus by Theorem \ref{thm:bagaria}(3),  $A$ is
$2$-s-stationary.

Conversely, suppose $B$ is bounded in $\rho$.  Let $\eta<\rho$ be such that
 $B \subseteq \eta$ and set $S=E_\eta^\lambda$.  By (C2), $\Tr_\lambda(S)  \equivNS\bigcup_{\eta<\mu<\rho}E^\lambda_\mu$, and by
Lemma \ref{lem:regularsupport},  $\Tr_\lambda(S) \subseteq \Reg(\lambda)$.
By the choice of $\eta$, the set $A \cap E^\lambda_\mu$ is non-stationary for every $\eta<\mu<\rho$,
and hence by $\lambda$-completeness of $\NS_\lambda$, the set $A \cap \big(\bigcup_{\eta<\mu<\rho}E^\lambda_\mu\big)$ is non-stationary.
Thus by (C1),
$
  A\cap\Tr_\lambda(S)\in\NS_\lambda.
$
Let $C$ be a club of $\lambda$ disjoint from $ A\cap\Tr_\lambda(S)$ and set $T=S\cap C$.
Then $T$ is stationary, and for $\beta\in\Tr_\lambda(T)$, the set $T\cap\beta$
is unbounded in $\beta$, and hence  $\beta\in C$.  It follows that
$
  \Tr_\lambda(S')\subseteq\Tr_\lambda(S)\cap C,$ and
$A\cap\Tr_\lambda(S')=\emptyset.$
Thus the pair $(T, T)$ witnesses that $A$ is not $2$-s-stationary. This proves $\text{Transfer}_0$.

Next fix $0<\eta<\rho$.  By Lemma \ref{lem:localsupport}, there is a club $C$, such that for all
$\alpha\in C \cap E_\eta^\lambda$,
$
  \sigma_\alpha(A\cap\alpha)=B\cap\eta.
$
Applying $\text{Transfer}_0$ to the local system at $\alpha$, we get
\[
 \alpha\in d_2(A)
 \Longleftrightarrow
 B\cap\eta\text{ is unbounded in }\eta
 \Longleftrightarrow
 \eta\in d_0(B),
\]
which gives items $(1)_0$ and $(2)_0$.

Now assume that  $\text{Transfer}_k$ and items $(1)_k$ and $(2)_k$ hold
 for every $k\leq m$ and all coherent systems.  We  prove $\text{Transfer}_{m+1}$.
First suppose that $B=\sigma_\lambda(A)$ is $(m+1)$-s-stationary in $\rho$.  It is unbounded, so
by $\text{Transfer}_0$, the set $A$ is $2$-s-stationary. We have to show that $A$ is indeed $(m+3)$-s-stationarity.
Fix $k\leq m$ and let $S,T\subseteq\lambda$ be
$(k+2)$-s-stationary.  By $\text{Transfer}_k$, the sets
$U=\sigma_\lambda(S)$ and $ V=\sigma_\lambda(T)$
are $k$-s-stationary in $\rho$.  Since $B$ is
$(m+1)$-s-stationary, by Theorem \ref{thm:bagaria}(3),
$\eta\in B\cap d_k(U)\cap d_k(V)$
for some $\eta<\rho$.  Clearly $\eta>0$, as $0$ is isolated in
every $\tau_k$.
By $(1)_k$, $E_\eta^\lambda \setminus d_{k+2}(S) \in \NS_\lambda$
and $E_\eta^\lambda \setminus d_{k+2}(T) \in \NS_\lambda$.
  Since $A\cap E_\eta^\lambda$ is stationary,
$
  A\cap d_{k+2}(S)\cap d_{k+2}(T)\neq\emptyset.
$
Thus by Theorem \ref{thm:bagaria}(3), $A$ is $(m+3)$-s-stationary.

For the converse, suppose $B$ is not $(m+1)$-s-stationary.  By
Theorem \ref{thm:bagaria}(3), there are $k\leq m$ and $k$-s-stationary sets
$U,V\subseteq\rho$ such that
$
  B\cap d_k(U)\cap d_k(V)=\emptyset.
$
Set
$
  S=\bigcup_{\xi\in U}E_\xi^\lambda,$
and
$  T=\bigcup_{\xi\in V}E_\xi^\lambda.$
By  $\text{Transfer}_k$, the sets $S$ and $T$ are
$(k+2)$-s-stationary.  Let
$
  H=d_{k+2}(S)\cap d_{k+2}(T).
$
If $0<\eta\in B$, then  $0<\eta \notin d_k(U)\cap d_k(V)$, thus by $(2)_k$, i
$H\cap E_\eta^\lambda$ is non-stationary.  If $\eta\notin B$, then by its definition, $A\cap E_\eta^\lambda$ is
non-stationary.  Thus
$
 A\cap H\cap\big(\bigcup_{0<\eta<\rho}E_\eta^\lambda\big)
 \in\NS_\lambda.
$
On the other hand,  by Lemma \ref{lem:rankzero},
$H\cap E_0^\lambda=\emptyset$, by Lemma \ref{lem:regularsupport},
$H\subseteq\Reg(\lambda)$, and by (C1)  $\Reg(\lambda) \setminus \bigcup_{0<\eta<\rho}E_\eta^\lambda \in \NS_\lambda$. Hence $A\cap H \in \NS_\lambda$.

Let $C$ be a club of $\lambda$ disjoint from $A\cap H$, and set
$S'=S\cap C$ and $T'=T\cap C$.  Then $S'$ and $T'$ are
$(k+2)$-s-stationarity,
$d_{k+2}(S')\subseteq d_{k+2}(S)$ and $d_{k+2}(T')\subseteq d_{k+2}(T)$.  Moreover, every
point of either derivative belongs to $C$, because the corresponding initial
segment of $S'$ or $T'$ is unbounded and $C$ is closed.  Therefore
\[
 A\cap d_{k+2}(S')\cap d_{k+2}(T')
 \subseteq A\cap H\cap C=\emptyset.
\]
This witnesses the failure of $(m+3)$-s-stationarity and completes
$(\text{Transfer}_{m+1})$.

Finally fix $0<\eta<\rho$. We show items $(1)_{m+1}$ and $(2)_{m+1}$ hold.
 By Lemma \ref{lem:localsupport}, there is a club $C$, such that for all
$\alpha\in C \cap E_\eta^\lambda$,
$\sigma_\alpha(A\cap\alpha)=B\cap\eta$.  Thus by $(\text{Transfer}_{m+1})$,
\begin{align*}
 \alpha\in d_{m+3}(A)
 &\Longleftrightarrow A\cap\alpha
     \text{ is }(m+3)\text{-s-stationary in }\alpha\\
 &\Longleftrightarrow B\cap\eta
     \text{ is }(m+1)\text{-s-stationary in }\eta\\
 &\Longleftrightarrow \eta\in d_{m+1}(B).
\end{align*}
This proves $(1)_{m+1}$ and $(2)_{m+1}$. The theorem follows.
\end{proof}

Let us recall that for$B\subseteq\rho$, the set $D^\lambda_B=$ was defined as
$
  D^\lambda_B=\bigcup_{\eta\in B}E_\eta^\lambda.
$
We have the following immediate corollary.
\begin{corollary}
\label{cor:filtertransfer}
Assume the hypotheses of Theorem \ref{thm:shift} holds, $m<\omega$ and
$B\subseteq\rho$. Then
\begin{enumerate}
\item $ D^\lambda_B=\in\calF^{m+2}_\lambda
 \Longleftrightarrow B\in\calF^m_\rho$.

\item $ D^\lambda_B=\in(\calF^{m+2}_\lambda)^+
 \Longleftrightarrow B\in(\calF^m_\rho)^+.$
\end{enumerate}
\end{corollary}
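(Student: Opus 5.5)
The plan is to reduce both parts to $\text{Transfer}_m$ of Theorem~\ref{thm:shift}, applied to suitable sets built from $B$. The basic observation is that $\sigma_\lambda(D^\lambda_B)=B$, since the $E_\eta^\lambda$ are pairwise disjoint stationary sets. More generally, for any $B'\subseteq\rho$ we have $\sigma_\lambda(D^\lambda_{B'})=B'$.

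Part (2) comes first. By Theorem~\ref{thm:bagaria}(5), applied at $\lambda$ and at $\rho$, membership in $(\calF^{m+2}_\lambda)^+$ means being $(m+2)$-s-stationary in $\lambda$, and membership in $(\calF^m_\rho)^+$ means being $m$-s-stationary in $\rho$. If the filter is improper, its positive family is empty and there are no stationary sets of the relevant degree, so the equivalence is still just s-stationarity. $\text{Transfer}_m$ with $A=D^\lambda_B$ then gives (2) directly.

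For part (1), I would use $X\in\calF$ iff the complement is not positive. On the $\rho$-side, $\rho\setminus B$ is not $m$-s-stationary in $\rho$, so $\rho\setminus B\notin(\calF^m_\rho)^+$, which is equivalent to $B\in\calF^m_\rho$. On the $\lambda$-side, $\lambda\setminus D^\lambda_B$ is not the same set as $D^\lambda_{\rho\setminus B}$: it also contains $\Sing(\lambda)$ and the nonstationary remainder of $\Reg(\lambda)$. So I would compute $\sigma_\lambda(\lambda\setminus D^\lambda_B)=\rho\setminus B$ directly. This holds because $(\lambda\setminus D^\lambda_B)\cap E_\eta^\lambda$ is $E_\eta^\lambda$ when $\eta\notin B$ and empty when $\eta\in B$. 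Applying $\text{Transfer}_m$ to $A=\lambda\setminus D^\lambda_B$ gives: $A$ is $(m+2)$-s-stationary iff $\rho\setminus B$ is $m$-s-stationary. Negating both sides gives (1).

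The main obstacle is that $\text{Transfer}_m$ is stated for arbitrary $A\subseteq\lambda$, and the set $A=\lambda\setminus D^\lambda_B$ has a large singular part. I must check that $\sigma_\lambda$ ignores $\Sing(\lambda)$, and it does, since it only looks at intersections with the $E_\eta^\lambda$. The theorem quantifies over all $A$, so this is legitimate. Alternatively, one could note that $\Sing(\lambda)$ is not $2$-s-stationary, because $\sigma_\lambda(\Sing(\lambda))=\emptyset$, and argue modulo the ideal $\calI^{m+2}_\lambda$. Either way the proof is a short unwinding of definitions.
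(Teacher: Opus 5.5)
Your proof is correct and follows the route the paper intends when it calls this an immediate corollary of Theorem~\ref{thm:shift}. You compute $\sigma_\lambda(D^\lambda_B)=B$ and $\sigma_\lambda(\lambda\setminus D^\lambda_B)=\rho\setminus B$ from (C1), identify positive sets with s-stationary sets (covering the improper case via monotonicity), and apply $\text{Transfer}_m$ to $D^\lambda_B$ and to its complement; applying it to $\lambda\setminus D^\lambda_B$ rather than to $D^\lambda_{\rho\setminus B}$ correctly handles the singular part.
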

\section{Completeness of  $\GL$ for higher derived topologies}
\label{sec:lifting}
The proof of the next theorem is very similar to the proof of Theorem \ref{thm:mahlo-single-cut}.
\begin{theorem}
\label{thm:labellift}
Let $m,n<\omega$.  Suppose
$\langle B_s:s\in K_n\rangle$ is an $(\calF^m, \rho)$-realization, where
$0<\rho$, and $\lambda$ carries a coherent system
$\langle E_\eta^\lambda:\eta<\rho\rangle$.  Define $\Gamma: K_n \rightarrow \mathcal{P}(\lambda+1)$ by
\[
  \Gamma(\emptyset)=\{\lambda\},
  \qquad
  \Gamma(s)=D_\lambda(B_s)=\bigcup_{\eta\in B_s}E_\eta^\lambda
  \quad(s\neq\emptyset).
\]
Then $\Gamma$ is an $(\calF^{m+2}, \lambda)$-realization.
\end{theorem}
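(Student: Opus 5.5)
We verify the five clauses of Definition \ref{def:rooted} for the family $\Gamma(s)$, $s\in K_n$, with root $\lambda$, using the filter-transfer Corollary \ref{cor:filtertransfer}, applied not only at $\lambda$ but at every point of $\Gamma(s)$ via the coherence clause (C3). The first three clauses are immediate. We have $\Gamma(\emptyset)=\{\lambda\}$ by definition. For $s\neq\emptyset$, $\Gamma(s)\subseteq\lambda$ since each $E_\eta^\lambda\subseteq\Reg(\lambda)$. Pairwise disjointness follows as in Theorem \ref{thm:mahlo-single-cut}: the $B_s$ are pairwise disjoint subsets of $\rho$ (note $B_s\subseteq\rho$ for $s\neq\emptyset$), the $E_\eta^\lambda$ are pairwise disjoint by (C1), and $\lambda\notin\Gamma(t)$ for $t\neq\emptyset$.

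For the two filter clauses, fix $\alpha\in\Gamma(s)$ and define the local rank $r(\alpha)$ as in Theorem \ref{thm:mahlo-single-cut}. If $s=\emptyset$, then $\alpha=\lambda$ and $r(\alpha)=\rho$. Otherwise $r(\alpha)=\eta$, where $\eta\in B_s$ is the unique index with $\alpha\in E_\eta^\lambda$. When $s\neq\emptyset$ and $\eta>0$, (C3) says $\langle E_\xi^\lambda\cap\alpha:\xi<\eta\rangle$ is a coherent Reflection system of length $\eta$ at $\alpha$. For any $t$ we then have
\[
\Gamma(t)\cap\alpha=\bigcup_{\xi\in B_t\cap\eta}(E_\xi^\lambda\cap\alpha)=D^\alpha_{B_t\cap\eta},
\]
because indices $\xi\geq\eta$ contribute nothing: $E_\xi^\lambda\cap\alpha$ is empty or irrelevant since $\alpha$'s local system only sees $\xi<\eta$. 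This point needs care. Clause (C3) only asserts that the sets $E_\xi^\lambda\cap\alpha$ for $\xi<\eta$ form the local system. Points of $E_\xi^\lambda\cap\alpha$ with $\xi\geq\eta$ lie in $\Reg(\alpha)$ but, by (C1) applied at $\alpha$, they form a nonstationary set. So the identity holds modulo $\NS_\alpha$, and Lemma \ref{lem:NSinvariance} lets us ignore that discrepancy, since $\alpha$ has uncountable cofinality.

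Applying Corollary \ref{cor:filtertransfer} at $\alpha$ with the system of length $\eta$ gives
\[
\Gamma(t)\cap\alpha\in(\calF^{m+2}_\alpha)^+\iff B_t\cap\eta\in(\calF^m_\eta)^+,
\]
and similarly $\bigcup_{s\triangleleft t}\Gamma(t)\cap\alpha\in\calF^{m+2}_\alpha$ iff $\bigcup_{s\triangleleft t}B_t\cap\eta\in\calF^m_\eta$. The right-hand sides are exactly the last two clauses of the $(\calF^m,\rho)$-realization property for $\langle B_s\rangle$ at $\eta\in B_s$. At the root ($s=\emptyset$, $\alpha=\lambda$) the same holds using the system at $\lambda$ itself and $\rho$, with the realization clauses at $\rho\in B_\emptyset$. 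Both clauses therefore transfer.

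The main obstacle is the degenerate case $\eta=0$, where (C3) gives no system, together with making the modulo-$\NS$ identification rigorous. If $0\in B_s$ and $s$ has a successor $t$, the realization requires $B_t\cap 0=\emptyset\in(\calF^m_0)^+$. That is impossible unless $\calF^m_0$ is improper with empty positives, so this would fail. Hence $0\in B_s$ forces $s$ to be terminal, and then the union clause needs $\emptyset\in\calF^m_0$, i.e.\ $\calF^m_0$ is improper. On our side we need $\calF^{m+2}_\alpha$ improper for $\alpha\in E_0^\lambda$. By (C4), $\Reg(\alpha)$ is nonstationary in $\alpha$, so Lemma \ref{lem:regularsupport} and Lemma \ref{lem:rankzero} show that no set is $2$-s-stationary in $\alpha$. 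Thus $\calF^{m+2}_\alpha=\Pow(\alpha)$, and the terminal-node clause holds vacuously with the empty union. Similarly, for terminal $s$ with $\eta>0$, the impropriety of $\calF^m_\eta$ transfers through Theorem \ref{thm:shift}, since $\eta$ not $m$-s-reflecting gives $\alpha$ not $(m+2)$-s-reflecting.
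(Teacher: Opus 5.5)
Away from rank $0$ your argument is the one the paper intends. The paper only says the proof is ``very similar'' to Theorem~\ref{thm:mahlo-single-cut}: compute the local rank, use (C3) to get a system of length $\eta$ at $\alpha$, and apply Corollary~\ref{cor:filtertransfer} there. Your remark that $\Gamma(t)\cap\alpha$ agrees with $D^\alpha_{B_t\cap\eta}$ only modulo $\NS_\alpha$ is correct, and so is the repair via (C1) at $\alpha$ and Lemma~\ref{lem:NSinvariance}. This is more careful than the paper, which writes an exact equality.

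The step that fails is the rank-zero case. For $\alpha\in E_0^\lambda$ you need $\emptyset\in\calF^{m+2}_\alpha$, i.e.\ that $\alpha$ is not $(m+2)$-s-reflecting. You get this by claiming that (C4) together with Lemmas~\ref{lem:regularsupport} and~\ref{lem:rankzero} shows that no set is $2$-s-stationary in $\alpha$. Those lemmas concern only sets $S\subseteq\Reg$, and by (C4) such sets are nonstationary in $\alpha$. So they can never be the stationary pair that witnesses failure of $2$-s-stationarity. Refuting $2$-s-reflection at $\alpha$ requires two stationary subsets of $\alpha$, concentrating on singular ordinals, with no common reflection point, and Definition~\ref{def:coherent} supplies nothing of the kind.

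In fact a regular non-Mahlo cardinal can be $2$-s-reflecting. Let $\chi$ be the least inaccessible limit of supercompact cardinals. Then $\chi$ is not Mahlo. Yet by Fodor any two stationary subsets of $\chi$ shrink to stationary sets of fixed cofinalities below some supercompact $\nu<\chi$, and a $\chi$-supercompactness embedding for $\nu$ makes them reflect simultaneously. The paper's own proof does not handle this case either: it asserts that terminal nodes have successor rank.

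The simplest repair uses what you already established, namely that $0$ can lie only in labels of terminal nodes. Deleting $0$ from every $B_s$ gives another $(\calF^m,\rho)$-realization:
\begin{itemize}
\item wherever $\calF^m_\eta$ is proper, $\eta$ is a limit ordinal and $\{0\}\in\calI^m_\eta$, so removing $0$ changes neither positivity nor filter membership;
\item where $\calF^m_\eta$ is improper, the clauses are unaffected;
\item the root label $\{\rho\}$ and disjointness are untouched.
\end{itemize}
After this normalization $E_0^\lambda$ never meets $\Gamma$, and the rest of your proof goes through. The alternative is to strengthen (C4) to say that points of $E_0^\lambda$ are not $2$-s-reflecting, and then verify this in Witzany's model.
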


\begin{corollary}
\label{cor:abstractcomplete}
Fix $m<\omega$. Suppose for every finite $n$, there is an   \((\calF^m, \rho_n)\)-realization with $\rho_n>0$, and there is some cardinal   \(\lambda_n\) which carries a coherent  Reflection system \(\langle E_\eta^\lambda:\eta<\rho_n\rangle\).  Then for every ordinal $\alpha \geq \sup_{n<\omega}\lambda_n$,
 \(\GL\) is complete with respect to the ordinal space  $(\alpha. \tau_{m+2})$.
\end{corollary}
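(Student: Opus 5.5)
The plan is to combine Theorem \ref{thm:labellift} with Blass's criterion (Theorem \ref{thm:blasscriterion}) and the identification of filter validity with topological validity (Corollary \ref{cor:soundness}). Fix $n<\omega$. By hypothesis there is an $(\calF^m,\rho_n)$-realization $\langle B_s:s\in K_n\rangle$ with $\rho_n>0$, and $\lambda_n$ carries a coherent Reflection system of length $\rho_n$. Theorem \ref{thm:labellift} turns these into an $(\calF^{m+2},\lambda_n)$-realization $\Gamma_n$, with $\Gamma_n(\emptyset)=\{\lambda_n\}$ and $\Gamma_n(s)=D^{\lambda_n}_{B_s}\subseteq\lambda_n$ for $s\neq\emptyset$.

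Next I view $\Gamma_n$ as a map $K_n\to\Pow(\alpha)$, where $\alpha\geq\sup_k\lambda_k$. If $\alpha=\lambda_n$ for some $n$, I first replace $\alpha$ by $\alpha+1$ or argue directly. In fact it is cleaner to work with the filter sequence $\langle\calF^{m+2}_\beta:\beta<\Omega\rangle$ for a sufficiently large $\Omega$, and to note that clauses (3) and (4) of Theorem \ref{thm:blasscriterion} only refer to $\calF^{m+2}_\beta$ for $\beta\in\Gamma_n(s)$, that is, for $\beta\leq\lambda_n$. Clause (1) holds because $\Gamma_n(\emptyset)=\{\lambda_n\}$. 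Clause (2) is disjointness, which is part of the definition of a realization. Clauses (3) and (4) are exactly the last two items of Definition \ref{def:rooted}. Hence the labeling hypothesis of Theorem \ref{thm:blasscriterion} is met for every $n$.

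Blass's argument is local. A non-theorem of $\GL$ is refuted on a finite tree of some height $n$, which by Lemma \ref{lem:treeuniversal} is a bounded morphic image of $K_n$. The valuation pulled back through $\Gamma_n$ then refutes the formula at the point $\lambda_n$, and it only uses points below $\lambda_n+1$. So it suffices that $\lambda_n<\alpha$, or more carefully that $\lambda_n+1\leq\alpha$ so that $\lambda_n$ is a point of the space. For the boundary case $\alpha=\sup_k\lambda_k=\lambda_n$, I can pass to $n'>n$: since $K_n$ embeds as a subtree of $K_{n'}$, the labeling for $n'$ also handles height-$n$ trees, and a larger $\lambda_{n'}$ can be chosen if needed. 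In the intended application there are unboundedly many candidates below $\kappa$ (Corollary \ref{cor:rootsupply}), so I will arrange that $\lambda_n<\alpha$. The derivative $d_{m+2}$ computed in $(\alpha,\tau_{m+2})$ agrees, at points $\beta<\alpha$, with the filter derivative $d_{\calF^{m+2}}$, by Theorem \ref{thm:bagaria}(2) and Proposition \ref{prop:neighborhood}. Therefore the refuting valuation on $(\alpha,\tau_{m+2})$ falsifies the formula at $\lambda_n$. This gives $\Logd(\alpha,\tau_{m+2})\subseteq\GL$, and soundness (Corollary \ref{cor:soundness}) gives the reverse inclusion.

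The main obstacle is this locality and restriction step. Theorem \ref{thm:blasscriterion} is stated for a single filter sequence below $\Omega$, so I must check that the realization at root $\lambda_n$ refutes formulas in the space $(\alpha,\tau_{m+2})$ and not merely in the filter frame below $\lambda_n+1$. This holds because $\tau_{m+2}$-neighbourhoods of $\beta$ are determined by $\beta$'s initial segment, so truth of a formula at $\beta$ depends only on the valuation restricted to $\beta+1$. With that remark in place, the rest is bookkeeping.
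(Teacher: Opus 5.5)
Your main line is the paper's intended argument. Theorem~\ref{thm:labellift} lifts each $(\calF^m,\rho_n)$-realization to an $(\calF^{m+2},\lambda_n)$-realization, Theorem~\ref{thm:blasscriterion} is applied, and Corollary~\ref{cor:soundness} converts filter validity into validity on $(\alpha,\tau_{m+2})$. Your locality discussion is not needed. Once the labels are subsets of $\alpha$, clauses (3) and (4) only involve the filters $\calF^{m+2}_\beta$ for $\beta\le\lambda_n$, so Theorem~\ref{thm:blasscriterion} applies verbatim with $\Omega=\alpha$.

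The step that fails as written is the boundary case $\lambda_n=\alpha$. The hypothesis $\alpha\ge\sup_k\lambda_k$ allows it, and the paper does not address it.
\begin{itemize}
\item Replacing $\alpha$ by $\alpha+1$ proves something about a different space. Since $\alpha$ is open in $\alpha+1$, you only get $\Logd(\alpha+1,\tau_{m+2})\subseteq\Logd(\alpha,\tau_{m+2})$, which is the wrong direction.
\item Choosing a larger $\lambda_{n'}$, or invoking Corollary~\ref{cor:rootsupply}, is not available in the abstract statement: the $\lambda_k$ are given. You give no reason why some $\lambda_{n'}$ with $n'>n$ lies strictly below $\alpha$.
\item What you need from ``$K_n$ inside $K_{n'}$'' is a cone (generated subtree), not the inclusion $K_n\subseteq K_{n'}$, which is not upward closed at the root.
\end{itemize}

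Using a cone repairs the case outright, because the new root drops strictly below the old one. Take the $(\calF^{m+2},\lambda_{n+1})$-realization $\langle\Gamma(s):s\in K_{n+1}\rangle$ and let $t_0=\langle(n,0)\rangle$. The cone $\{t_0{}^\frown u:u\in K_n\}$ is isomorphic to $K_n$. The set $\Gamma(t_0)\cap\lambda_{n+1}$ is $\calF^{m+2}_{\lambda_{n+1}}$-positive, hence nonempty; pick $\beta$ in it. Set $B'_\emptyset=\{\beta\}$ and $B'_u=\Gamma(t_0{}^\frown u)\cap\beta$ for $u\neq\emptyset$. The clauses of Definition~\ref{def:rooted} follow from those for $\Gamma$: at $\beta$ because $\beta\in\Gamma(t_0)$, and below $\beta$ because $\Gamma(t)\cap\beta\cap\eta=\Gamma(t)\cap\eta$ for $\eta<\beta$. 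So this is an $(\calF^{m+2},\beta)$-realization of $K_n$ with $\beta<\lambda_{n+1}\le\alpha$. With a root strictly below $\alpha$ for every $n$, the rest of your argument goes through unchanged.
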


Putting the results together, we obtain the main result of the paper.
\begin{theorem}
\label{cor:concrete}
Assume GCH holds,  and   $\kappa$ carries a coherent sequence $\vecU$
with $o^{\vecU}(\kappa)\geq\kappa$. Then there is a cofinality and GCH preserving generic extension $V[G]$ of $V$
in which $\GL$ is complete for each filter sequence
$\langle\calF^m_\alpha:\alpha<\kappa\rangle$, and hence
$\Logd(\kappa,\tau_m)=\GL$ for every $m<\omega$
\end{theorem}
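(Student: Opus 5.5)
The plan is to argue by induction on $m$, but parity-wise, because Theorem \ref{thm:labellift} moves from $\calF^m$ to $\calF^{m+2}$. We work in the model $V[G]$ of Proposition \ref{prop:cutoff}, for some fixed $\Theta<\kappa$ (say $\Theta=\omega$). Soundness is Corollary \ref{cor:soundness}. By the same corollary, filter validity for $\langle\calF^m_\alpha:\alpha<\kappa\rangle$ coincides with derivative validity on $(\kappa,\tau_m)$. So it suffices to show that $\GL$ is complete for each filter sequence $\calF^m$ below $\kappa$. By Theorem \ref{thm:blasscriterion} and Lemma \ref{lem:rootrestriction}, this reduces to the following claim $(\ast)_m$: for every $n<\omega$ there are some $0<\rho<\kappa$ and an $(\calF^m,\rho)$-realization of $K_n$. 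Such a realization, indexed by sets below $\rho+1\le\kappa$, yields a map $\Gamma$ into $\Pow(\kappa)$ as in Theorem \ref{thm:blasscriterion}. This works because $\calF^m_\alpha$ only looks at subsets of $\alpha$, so the realization transfers directly.

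For the base cases we use the following. $(\ast)_0$ is Theorem \ref{thm:tailbase}, since $\calF^0_\alpha$ is the end-segment filter. It gives $\rho<\omega^\omega<\kappa$, an absolute fact that holds in $V[G]$. For $(\ast)_1$ (the club filter), Theorem \ref{thm:clubbase} needs $\square_\mu$ for $\mu<\aleph_\omega$, and we do not directly have this in $V[G]$. I would try first to check that Witzany's iteration with $\Theta\ge\aleph_\omega$ is trivial below $\Theta$. By Proposition \ref{prop:cutoff}(1),(3), $V[G]$ then agrees with $V$ on $\Pow(\aleph_\omega)$. Hence, if $V$ satisfies the squares, so does $V[G]$ up to $\aleph_\omega$. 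The squares could be arranged beforehand by forcing them, or by the standard observation that a preparatory GCH-preserving forcing adding $\square_{\aleph_n}$ for $n<\omega$ is small relative to $\kappa$ and preserves the measure sequence. As a fallback, handle $m=1$ by including in the iteration the canonical $\square$-adding forcing at the first $\omega$ cardinals. In either case, Theorem \ref{thm:clubbase} produces a $(\calF^1,\rho)$-realization with $\rho<\aleph_\omega<\kappa$ once $\Theta>\aleph_\omega$ has been chosen.

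For the induction step, assume $(\ast)_m$ holds with witness $\rho=\rho_n<\kappa$. By Corollary \ref{cor:rootsupply} choose a $\lambda\in X_{\rho}$ with $\lambda>\max\{\Theta,\rho\}$. Then $\lambda$ is a regular cardinal and $0<\rho<\lambda$. By Proposition \ref{prop:cutoff}(4), $\langle X_\eta\cap\lambda:\eta<\rho\rangle$ is a coherent Reflection system of length $\rho$ at $\lambda$ in $V[G]$. Theorem \ref{thm:labellift} now turns the $(\calF^m,\rho)$-realization into an $(\calF^{m+2},\lambda)$-realization with $\lambda<\kappa$, which proves $(\ast)_{m+2}$. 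An important check is that the realization in $V[G]$ is computed with the $V[G]$-versions of the filters. This is automatic here, because everything is evaluated in the final model. Proposition \ref{prop:cutoff}(3) guarantees that stationarity facts at $\lambda$, once established, persist.

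The main obstacle is Theorem \ref{thm:labellift} itself. Its proof mirrors Theorem \ref{thm:mahlo-single-cut}, with Corollary \ref{cor:filtertransfer} in place of Theorem \ref{thm:mahlo-tail}. For $\alpha\in E_\eta^\lambda$ with $\eta\in B_s$, we use Lemma \ref{lem:localsupport} and (C3) to view $\alpha$ as carrying the system of length $\eta$. We then need $\Gamma(t)\cap\alpha\in(\calF^{m+2}_\alpha)^+$ to follow from $B_t\cap\eta\in(\calF^m_\eta)^+$. The difficulty is that the transfer only holds modulo a club in $\lambda$, whereas a realization demands the property at every $\alpha\in\Gamma(s)$. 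I would handle this by shrinking each $\Gamma(s)$ to $\Gamma(s)\cap C$ for a single club $C$, obtained by intersecting the fewer than $\lambda$ clubs supplied by Lemma \ref{lem:localsupport} for the finitely many sets $D^\lambda_{B_t}$. By Lemma \ref{lem:NSinvariance}, this shrinking does not affect positivity or filter membership at $\lambda$.
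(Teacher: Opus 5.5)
Your proposal is correct and is essentially the paper's proof. Like the paper, you arrange $\square_\mu$ for $\mu<\aleph_\omega$ beforehand and take $\Theta>\aleph_\omega$ so that the base realizations of Theorems \ref{thm:tailbase} and \ref{thm:clubbase} survive. You then climb in steps of two via Corollary \ref{cor:rootsupply}, Proposition \ref{prop:cutoff}(4) and Theorem \ref{thm:labellift}, and finish with Theorem \ref{thm:blasscriterion} and Corollary \ref{cor:soundness}.

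The club-shrinking in your last paragraph is unnecessary, and as stated it would also need $\alpha$ to be a limit point of $C$. The comparison at each $\alpha$ is only modulo $\NS_\alpha$, not modulo a club of $\lambda$. By (C3), every $\alpha\in E^\lambda_\eta$ with $\eta>0$ carries the coherent system $\langle E^\lambda_\xi\cap\alpha:\xi<\eta\rangle$. By (C1) at $\alpha$, $\Gamma(t)\cap\alpha$ agrees with $D^\alpha_{B_t\cap\eta}$ modulo $\NS_\alpha$. So Corollary \ref{cor:filtertransfer} together with Lemma \ref{lem:NSinvariance} applies directly at every such $\alpha$.
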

\begin{proof}
By a preliminary small forcing, or going to the inner moder, we can assume that $\square_\mu$ holds for every infinite cardinal
$\mu<\aleph_\omega$.
Choose a cardinal $\Theta$ with $\aleph_\omega<\Theta<\kappa$ and apply
Proposition~\ref{prop:cutoff}. Since the iteration is trivial at and below
$\Theta$ and adds no subset of $\Theta$, the two families of base
realizations supplied by Theorems~\ref{thm:tailbase} and
\ref{thm:clubbase} remain available in the final extension.

We construct, for every $m,n<\omega$, an
$(\calF^m,\rho^m_n)$-realization of $K_n$ with
$0<\rho^m_n<\kappa$. For $m=0$, use Theorem~\ref{thm:tailbase}; for $m=1$,
use Theorem~\ref{thm:clubbase}. Suppose an
$(\calF^m,\rho^m_n)$-realization has been constructed. By
Corollary~\ref{cor:rootsupply}, choose
$
 \lambda^m_n\in X_{\rho^m_n}$
such that
$ \lambda^m_n>\max\{\Theta,\rho^m_n\}.
$
Proposition~\ref{prop:cutoff}(4) gives at $\lambda^m_n$ a coherent
reflection system of length $\rho^m_n$. Theorem~\ref{thm:labellift} then
lifts the given realization to an
$(\calF^{m+2},\lambda^m_n)$-realization. Put
$\rho^{m+2}_n=\lambda^m_n$. This recursion constructs all even levels from
the interval base and, using square, all odd levels from the club base.

Fix $m<\omega$. For each $n<\omega$, regard the resulting rooted
realization as a labeling by subsets of $\kappa$. Since its root is below
$\kappa$, it satisfies the hypotheses of
Theorem~\ref{thm:blasscriterion} for
$\langle\calF^m_\alpha:\alpha<\kappa\rangle$. Hence every formula valid for
that filter sequence belongs to $\GL$. The reverse inclusion, and the
identification with derivative semantics on $(\kappa,\tau_m)$, follow from
Corollary~\ref{cor:soundness}.
\end{proof}

\section{On $\GLP$}
\label{sec:polymodal-lifting}

In this section we prove a consistency result for $\GLP_2$, that extends the earlier work of Beklemishev \cite{BeklemishevGLB}.
For a formula $\varphi$ in the modalities $[0],\ldots,[r-1]$, let
$\varphi^{\uparrow2}$ be obtained by replacing each $[i]$ by $[i+2]$.
Let $\mathcal E^\lambda=\langle E_\eta^\lambda:\eta<\rho\rangle$ be a
coherent Reflection system as in Definition~\ref{def:coherent}.  
A set
$A\subseteq\lambda$ is called \emph{$\mathcal E^\lambda$-saturated} if,
for every $0<\eta<\rho$, either
$
 A\cap E_\eta^\lambda\in\NS_\lambda$
or
$ E_\eta^\lambda\setminus A\in\NS_\lambda.
$
For a saturated set $A$, set
\[
 s_\lambda(A)=
 \{\eta\in\rho: 0< \eta \text{ and }
 E_\eta^\lambda\setminus A\in\NS_\lambda\}.
\]

\begin{lemma}
\label{lem:poly-saturation}
The $\mathcal E^\lambda$-saturated subsets of $\lambda$ are closed under
 Boolean operations.  Suppose that $A$ is saturated and for some set  $B\subseteq\rho$,
$s_\lambda(A)=B\cap(\rho\setminus\{0\})$.  Then, for every $m<\omega$, $d_{m+2}(A)$ is also
saturated and
$
 s_\lambda(d_{m+2}(A))
 =d_m(B)\cap(\rho\setminus\{0\}).
$
\end{lemma}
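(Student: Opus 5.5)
The plan has three steps: dispose of the Boolean closure directly, then get saturation of $d_{m+2}(A)$ and the formula for $s_\lambda$ from items $(1)_m$ and $(2)_m$ of Theorem \ref{thm:shift}. The one thing to arrange is that Theorem \ref{thm:shift} is applied to the set $B'=\sigma_\lambda(A)$, and that $B'$ agrees with $B$ away from $0$.

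\emph{Boolean closure.} For each fixed $0<\eta<\rho$, the sets $A$ with $A\cap E_\eta^\lambda\in\NS_\lambda$ or $E_\eta^\lambda\setminus A\in\NS_\lambda$ form a Boolean algebra, because $\NS_\lambda$ is an ideal. Complement swaps the two alternatives. For a union $A_1\cup A_2$: if both pieces are null on $E_\eta^\lambda$, so is the union; if one piece is conull on $E_\eta^\lambda$, so is the union. Intersecting over all $\eta$ gives closure of the saturated sets. The same argument shows $s_\lambda(\lambda\setminus A)=(\rho\setminus\{0\})\setminus s_\lambda(A)$ and $s_\lambda(A_1\cup A_2)=s_\lambda(A_1)\cup s_\lambda(A_2)$. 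These identities are not needed here but are presumably what the later $\GLP$ argument uses.

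\emph{Derivative clause.} Let $B'=\sigma_\lambda(A)$. For $0<\eta<\rho$ the set $E_\eta^\lambda$ is stationary, by (C1). Saturation then gives $\eta\in B'\iff A\cap E_\eta^\lambda$ is stationary $\iff E_\eta^\lambda\setminus A\in\NS_\lambda\iff\eta\in s_\lambda(A)$. Hence $B'\setminus\{0\}=s_\lambda(A)=B\setminus\{0\}$. Applying $(1)_m$ and $(2)_m$ of Theorem \ref{thm:shift} to $A$, for each $0<\eta<\rho$ exactly one of the following holds:
\begin{itemize}
\item $\eta\in d_m(B')$ and $E_\eta^\lambda\setminus d_{m+2}(A)\in\NS_\lambda$;
\item $\eta\notin d_m(B')$ and $E_\eta^\lambda\cap d_{m+2}(A)\in\NS_\lambda$.
\end{itemize}
So $d_{m+2}(A)$ is saturated and $s_\lambda(d_{m+2}(A))=d_m(B')\cap(\rho\setminus\{0\})$.

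\emph{The point $0$.} It remains to replace $B'$ by $B$. The sets $B$ and $B'$ can differ only at $0$, and $0$ is isolated in every $\tau_m$ (as noted in the proof of Theorem \ref{thm:shift}). Therefore for every $\eta>0$, $B\cap\eta$ and $B'\cap\eta$ differ by at most the single point $0$. I expect this to be the only delicate step.

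For $m=0$, unboundedness in a limit $\eta$ is unaffected by one point. For $m\geq1$, Theorem \ref{thm:bagaria}(2),(4) makes $m$-s-stationarity in $\eta$ insensitive to modification by a bounded set. If $\operatorname{cf}\eta=\omega$, an $m$-s-stationary set contains a stationary set, hence a set of uncountable-cofinality points; a club of order type $\omega$ avoids it, so $\eta$ carries no such sets. Thus $\eta\in d_m(B)$ fails for both sets alike, and is decided only by the tail of $\eta$ in general. If $\operatorname{cf}\eta>\omega$, the two sets differ by a nonstationary set, so they agree by Lemma \ref{lem:NSinvariance}. In either case $d_m(B)\cap(\rho\setminus\{0\})=d_m(B')\cap(\rho\setminus\{0\})$, which completes the proof.
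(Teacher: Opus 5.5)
Your proof is correct and follows the paper's route: for each $0<\eta<\rho$, apply items $(1)_m$ and $(2)_m$ of Theorem~\ref{thm:shift}, after noting that changing a set at the isolated point $0$ does not change $d_m$. You are more careful than the paper, which applies Theorem~\ref{thm:shift} directly to $B$ without remarking that $\sigma_\lambda(A)\setminus\{0\}=s_\lambda(A)$ for saturated $A$, and which asserts $d_m(s_\lambda(A))=d_m(B)$ without argument. Your cofinality case split is superfluous: for limit $\eta$ and $m\geq1$, trimming by the club $(0,\eta)$ via Theorem~\ref{thm:bagaria}(4), together with upward closure of $m$-s-stationarity, already gives that invariance.
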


\begin{proof}
It is clear that $\mathcal E^\lambda$-saturated sets are closed under  Boolean operations. 
 Moreover the assumption $s_\lambda(A)=B\cap(\rho\setminus\{0\})$ implies 
$
s_\lambda(A)\mathbin\triangle B\subseteq\{0\},
$
and hence
$
 d_m(s_\lambda(A))=d_m(B).
$
By  Theorem~\ref{thm:shift}, for every $0<\eta < \rho$, we have
\begin{itemize}
\item  $\eta\in d_m(B)\Longrightarrow
 E_\eta^\lambda\setminus d_{m+2}(A)\in\NS_\lambda \Longrightarrow \eta \in s_\lambda(d_{m+2}(A)),$
\item $\eta\notin d_m(B)\Longrightarrow
 E_\eta^\lambda\cap d_{m+2}(A)\in\NS_\lambda \Longrightarrow \eta \notin s_\lambda(d_{m+2}(A)).$
\end{itemize}
The result follows immediately.
\end{proof}

 If $v$ is a valuation on $\rho+1$, define  $\widehat v$ on $\lambda+1$ by
\begin{align*}
 \widehat v(p)\cap\lambda
   &=D^\lambda_{v(p)\cap\rho}
     =\bigcup_{\eta\in v(p)\cap\rho}E_\eta^\lambda,\\
 \lambda\in\widehat v(p)&\quad\Longleftrightarrow\quad
 \rho\in v(p).
\end{align*}
Then we have the following easy result.
\begin{theorem}
\label{thm:polymodal-two-step}
Suppose $r\geq1$ and  $v$ is a valuation on
$(\rho+1;\tau_0,\ldots,\tau_{r-1})$.  For every formula $\varphi$ using
the modalities $[0],\ldots,[r-1]$, we have:
\begin{enumerate}[(1)]
\item $\llbracket\varphi^{\uparrow2}\rrbracket_{\widehat v}\cap\lambda$
      is $\mathcal E^\lambda$-saturated;
\item $ s_\lambda(\llbracket\varphi^{\uparrow2}\rrbracket_{\widehat v}
                 \cap\lambda)
 =\llbracket\varphi\rrbracket_v\cap
       (\rho\setminus\{0\});$
\item
$
 (\lambda+1;\tau_2,\ldots,\tau_{r+1}),\lambda
       \models_{\widehat v}\varphi^{\uparrow2}
 \quad\Longleftrightarrow\quad
 (\rho+1;\tau_0,\ldots,\tau_{r-1}),\rho
       \models_v\varphi.
$
\end{enumerate}
\end{theorem}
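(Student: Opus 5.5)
The plan is a simultaneous induction on the complexity of $\varphi$, proving (1) and (2) together; (3) then follows from an analysis at the root $\lambda$. Write $\psi=\varphi^{\uparrow2}$, $A_\psi=\llbracket\psi\rrbracket_{\widehat v}\cap\lambda$ and $B_\varphi=\llbracket\varphi\rrbracket_v$.

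For a propositional letter $p$, $A_p=D^\lambda_{v(p)\cap\rho}$. This set is a union of whole blocks $E_\eta^\lambda$, which are pairwise disjoint by (C1). It is therefore saturated, and $s_\lambda(A_p)=v(p)\cap(\rho\setminus\{0\})$. The Boolean cases are immediate from the closure part of Lemma \ref{lem:poly-saturation}, since $s_\lambda$ commutes with complement relative to $\rho\setminus\{0\}$ and with intersections.

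The modal case is $\psi=\langle i+2\rangle\chi$, where $\varphi=\langle i\rangle\varphi_0$ and $\chi=\varphi_0^{\uparrow2}$.

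\begin{itemize}
\item The key observation is that for $\alpha<\lambda$ the value of $d_{i+2}(X)$ at $\alpha$ depends only on $X\cap\alpha$. Hence $A_\psi=d_{i+2}(A_\chi)\cap\lambda$, and the value of $\widehat v$ at $\lambda$ plays no role for points below $\lambda$.
\item By induction, $A_\chi$ is saturated with $s_\lambda(A_\chi)=B_{\varphi_0}\cap(\rho\setminus\{0\})$.
\item Lemma \ref{lem:poly-saturation}, with $B=B_{\varphi_0}\cap\rho$ and $m=i$, then shows that $A_\psi$ is saturated and that $s_\lambda(A_\psi)=d_i(B_{\varphi_0}\cap\rho)\cap(\rho\setminus\{0\})$.
\item Points $\eta<\rho$ do not see $\rho$, so this set equals $B_\varphi\cap(\rho\setminus\{0\})$. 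This proves (1) and (2).
\end{itemize}

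For (3), I would again induct on $\varphi$, now tracking membership of the root: the claim is $\lambda\in\llbracket\psi\rrbracket_{\widehat v}$ iff $\rho\in B_\varphi$. Letters hold by the definition of $\widehat v$, and Boolean steps are trivial. In the modal step we have $\lambda\in d_{i+2}(\llbracket\chi\rrbracket)$ iff $A_\chi$ is $(i+2)$-s-stationary in $\lambda$. By Theorem \ref{thm:shift} (Transfer$_i$), this holds iff $\sigma_\lambda(A_\chi)$ is $i$-s-stationary in $\rho$.

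Saturation of $A_\chi$ gives $\sigma_\lambda(A_\chi)\setminus\{0\}=s_\lambda(A_\chi)=B_{\varphi_0}\cap(\rho\setminus\{0\})$. So $\sigma_\lambda(A_\chi)$ and $B_{\varphi_0}\cap\rho$ differ at most at the point $0$. Since $0$ is isolated in every $\tau_k$, changing membership of $0$ does not affect $i$-s-stationarity in $\rho$ when $\rho$ is a limit. This holds for $i=0$ (unboundedness) and for $i\ge1$ via Lemma \ref{lem:NSinvariance} or directly through Theorem \ref{thm:bagaria}(3), where $0\notin d_k$ of anything. Therefore the condition is equivalent to $\rho\in d_i(B_{\varphi_0})$, which says $\rho\models_v\langle i\rangle\varphi_0$.

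The main obstacle I expect is the edge cases: the point $0$, and $\rho$ a successor. When $\rho$ is a successor, $d_i(X)$ never contains $\rho$. Dually, $A_\chi$ is not $(i+2)$-s-stationary, because by Lemma \ref{lem:mahlo-rank-shape}(2) and monotonicity in $n$ the filter $\calF^{i+2}_\lambda$ is improper; Transfer$_i$ already handles this uniformly. I would also double-check that removing $\{0\}$ does not affect stationarity for $i=0$ in small $\rho$. For $\rho=1$ both sides are false, so this case is fine.
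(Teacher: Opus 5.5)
Your proposal is correct and follows the paper's proof. Both argue by induction on $\varphi$: Lemma~\ref{lem:poly-saturation} handles (1)--(2) in the modal step, and Transfer$_i$ from Theorem~\ref{thm:shift} handles (3) at the root. You also make explicit two points the paper passes over: the value at $\lambda$ (resp.\ $\rho$) does not affect derivatives at smaller points, and $\sigma_\lambda(A_\chi)$ and $\llbracket\varphi_0\rrbracket_v\cap\rho$ can differ only at $0$, which cannot change $i$-s-stationarity in $\rho$.
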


\begin{proof}
We prove the theorem  by induction on the complexity of $\varphi$. The only non-trivial case is when
$\varphi=\langle m\rangle\psi$.  Set
$
 A=\llbracket\psi^{\uparrow2}\rrbracket_{\widehat v}\cap\lambda,
$
and 
$ B=\llbracket\psi\rrbracket_v\cap\rho.$
By the induction hypothesis, the set $A$ is saturated and
$s_\lambda(A)=B\cap(\rho\setminus\{0\})$. Thus by Lemma
\ref{lem:poly-saturation},
$
 s_\lambda(d_{m+2}(A))
 =d_m(B)\cap(\rho\setminus\{0\}),
$
which proves (1) and (2) for $\varphi$.  For clause (3),
by Theorem~\ref{thm:shift} we have
\begin{align*}
 \lambda\in d_{m+2}(A)
 &\Longleftrightarrow
 A\text{ is }(m+2)\text{-s-stationary in }\lambda\\
 &\Longleftrightarrow
 \sigma_\lambda(A)\text{ is }m\text{-s-stationary in }\rho\\
 &\Longleftrightarrow
 B\text{ is }m\text{-s-stationary in }\rho\\
 &\Longleftrightarrow
 \rho\in d_m(B).
\end{align*}
  This completes the proof.
\end{proof}

Using Beklemishev's  theorem \ref{beklem} and repeated
applications of  Proposition \ref{prop:cutoff}, we get the following theorem.

\begin{theorem}
\label{thm:even-pair-glp2}
Let $V[G]$ be the generic extension obtained  in Theorem~\ref{cor:concrete}.  Then, for every $k<\omega$,
\[
 \Logp(\kappa;\tau_{2k},\tau_{2k+1})=\GLP_2,
\]
where the two modalities of $\GLP_2$ are interpreted respectively by
$d_{2k}$ and $d_{2k+1}$.
\end{theorem}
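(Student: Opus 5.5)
Soundness, $\GLP_2\subseteq\Logp(\kappa;\tau_{2k},\tau_{2k+1})$, is the general inclusion noted after the definition of $\GLP_r$. So the plan is to prove the converse by induction on $k$. For each formula $\varphi\notin\GLP_2$ we must find a valuation on $\kappa$ and a point $\lambda<\kappa$ at which $\varphi^{\uparrow 2k}$ fails, and each step of the induction should shift the modalities up by two using Theorem~\ref{thm:polymodal-two-step}.

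\textbf{Base case $k=0$.} As in the proof of Theorem~\ref{cor:concrete}, we may assume that $\square_\mu$ holds for all $\mu<\aleph_\omega$ and that $\Theta>\aleph_\omega$. Beklemishev's theorem~\ref{beklem} then gives, for $\varphi\notin\GLP_2$, a valuation $v$ on $\aleph_\omega$ refuting $\varphi$ under $(\tau_0,\tau_1)$. I would localize this refutation to a root: pick $\rho<\aleph_\omega$ with $\rho\notin\llbracket\varphi\rrbracket_v$, and replace the valuation by its restriction to $\rho+1$. This is harmless because $d_0$ and $d_1$ evaluated at points $\le\rho$ depend only on the sets intersected with those points. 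Beklemishev's argument works in $L$ or under squares below $\aleph_\omega$, and the iteration adds no subsets of $\Theta$, so the refutation survives into $V[G]$. Moving the witness to $\kappa$ is then just restriction: a valuation on $\rho+1$ is a valuation on $\kappa$ (put nothing above $\rho$), and truth at $\rho$ is local.

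\textbf{Inductive step.} Suppose $\psi:=\varphi^{\uparrow 2k}$ is refuted at some root $\rho<\kappa$ by a valuation $v$ on $(\rho+1;\tau_{2k},\tau_{2k+1})$. By Corollary~\ref{cor:rootsupply}, choose $\lambda\in X_\rho$ with $\lambda>\max\{\Theta,\rho\}$. By Proposition~\ref{prop:cutoff}(4), $\lambda$ carries a coherent Reflection system of length $\rho$. Define $\widehat v$ on $\lambda+1$ as before Theorem~\ref{thm:polymodal-two-step}, and apply that theorem with $r=2k+2$ to $\psi$, viewed as a formula in the modalities $[0],\dots,[2k+1]$. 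It gives
\[
(\lambda+1;\tau_{2k+2},\tau_{2k+3}),\lambda\not\models_{\widehat v}\psi^{\uparrow2}=\varphi^{\uparrow(2k+2)}.
\]
The key point is that although $\psi$ uses only the two top modalities, the theorem is stated for an arbitrary formula in modalities $[0],\dots,[r-1]$, so the unused lower modalities cause no problem. Since $\lambda<\kappa$, restricting to $\kappa$ as before yields a refutation in $(\kappa;\tau_{2k+2},\tau_{2k+3})$.

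\textbf{Expected obstacle.} The main care point is the base step: making sure Beklemishev's countermodel is rooted at an ordinal $\rho<\aleph_\omega<\Theta$ and survives the forcing. The forcing adds no subsets of $\Theta$, and $s$-stationarity at ordinals below $\Theta$ is absolute between $V$ and $V[G]$ for this reason. It also needs checking that Theorem~\ref{thm:polymodal-two-step}(3) really applies to an arbitrary $v$ on $\rho+1$ with $\rho$ merely an ordinal. Its proof uses only Theorem~\ref{thm:shift} and saturation, so this looks fine.
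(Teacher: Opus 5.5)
Your proposal is correct and follows the paper's argument, which the paper states in one line and you spell out in full. The base case uses Beklemishev's theorem for $(\tau_0,\tau_1)$, relying on the squares and $\Theta>\aleph_\omega$ from Theorem~\ref{cor:concrete}; the induction then lifts by two through Theorem~\ref{thm:polymodal-two-step} at some $\lambda\in X_\rho$ supplied by Corollary~\ref{cor:rootsupply} and Proposition~\ref{prop:cutoff}(4).

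One small detail to add: the base root must be nonzero, since coherent systems have positive length. A failure at the isolated point $0$ can simply be moved to the isolated point $1$.
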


{\bf Acknowledgements}. The author's research has been supported by a grant from IPM (No. 1405030417). He thanks Reihane Zoghifard for introducing him the problems and useful discussions about derived topologies.

\end{document}